\newenvironment{lesn}{\begin{linenomath}\begin{esn}}{\end{esn}\end{linenomath}}
\theoremstyle{plain}
\newtheorem{theorem}{Theorem}
\newtheorem{lemma}[theorem]{Lemma}
\newtheorem{corollary}[theorem]{Corollary}
\newtheorem{proposition}[theorem]{Proposition}
\theoremstyle{definition}
\newtheorem*{definition}{Definition}
\newtheorem*{remark}{Remark}
\def\thmhead@plain#1#2#3{%
  \thmname{#1}\thmnumber{\@ifnotempty{#1}{ }\@upn{#2}}%
  \thmnote{ {\the\thm@notefont{\bf (#3)}}}}
\let\thmhead\thmhead@plain
\newcommand{\cec}{}
\newcommand{\ger}[1]{#1}
\newcommand{\bl}{}
\newcommand{\defin}[1]{{\bf #1}}
\newcommand{\mc}[1]{\ensuremath{\mathscr{#1}}}
\newcommand{\cond}[2]{\left.\vphantom{#2}#1\ \right| #2}
\newcommand{\e}{\ensuremath{\mathbf{e}}}
\newcommand{\set}[1]{\ensuremath{\left\{ #1\right\} }}
\newcommand{\paren}[1]{\ensuremath{\left( #1\right) }}
\newcommand{\bra}[1]{\ensuremath{\left[ #1\right] }}
\newcommand{\sip}{\mathbb{P}}
\newcommand{\ssa}{\ensuremath{\mathscr{F}}}
\newcommand{\eps}{\ensuremath{ \varepsilon}}
\newcommand{\na}{\ensuremath{\mathbb{N}}}
\newcommand{\z}{\ensuremath{\mathbb{Z}}}
\newcommand{\re}{\ensuremath{\mathbb{R}}}
\newcommand{\proba}[1]{\ensuremath{\sip\! \left( #1 \right)}}
\newcommand{\probac}[2]{\ensuremath{\proba{\cond{#1}{#2}}}}
\newcommand{\abs}[1]{\hspace{.25mm}\left|#1\right|\hspace{.25mm}}
\newcommand{\F}{\ssa}
\newcommand{\p}{\ensuremath{ \sip  } }
\newcommand{\sag}[1]{\sigma\!\paren{#1}}
\newcommand{\imf}[2]{\ensuremath{#1\!\paren{#2}}}
\newenvironment{esn}{\begin{equation*}}{\end{equation*}}
\title
[Monkey random walks]
{Random walks with preferential relocations and fading memory: a study through random recursive trees}
\author{C\'ecile Mailler}
\address{Department of Mathematical Sciences, University of Bath, UK}
\email{c.mailler@bath.ac.uk}
\author{Ger\'onimo Uribe Bravo}
\address{Instituto de Matem\'aticas\\ Universidad Nacional Aut\'onoma de M\'exico, M\'exico}
\email{geronimo@matem.unam.mx}
\subjclass[2010]{
05C05, 
60F05
}
\thanks{
CM is grateful to EPSRC for support through the fellowship EP/R022186/1.
GUB's research supported by
CoNaCyT
grant FC-2016-1946 and UNAM-DGAPA-PAPIIT grant IN115217.
CM and GUB thank the Bath-UNAM-CIMAT (\href{http://buc.cimat.mx}{BUC})  research platform for the workshop \href{http://buc.cimat.mx/node/15}{BUC3}, during which the authors started their collaboration.
}
\newcommand{\mrw}[1]{\ensuremath{\imf{\mc{M}}{#1}}}
\newcommand{\fr}{\ensuremath{\varphi_{\text{run}}}}
\newcommand{\bs}{\ensuremath{\boldsymbol}}
\newcommand{\sss}{\ensuremath{\scriptscriptstyle}}
\begin{document}
\maketitle 
\begin{abstract}
Consider a stochastic process that behaves as a $d$-dimensional simple and symmetric random walk, 
except that, with a certain fixed probability, at each step, 
it chooses instead to jump to a given site with probability proportional to the time it has already spent there. 
This process has been analyzed in the physics literature under the name \emph{random walk with preferential relocations}, where it is argued that the position of the walker after $n$ steps, scaled by $\log n$, converges to a Gaussian random variable; 
because of the $\log$ spatial scaling, the process is said to undergo a \emph{slow diffusion}. 

In this paper, we generalize this model by allowing the underlying random walk to be any Markov process and the random run-lengths (time between two relocations) to be i.i.d.-distributed. 
We also allow the memory of the walker to fade with time, meaning that when a relocations occurs, the walker is more likely to go back to a place it has visited more recently. 

We prove rigorously the central limit theorem  described above (plus a local limit theorem and the convergence of the weighted occupation measure) by associating to the process a growing family of vertex-weighted random recursive trees and a Markov chain indexed by this tree. 
The spatial scaling of our relocated random walk is related to the height of a ``typical'' vertex in the random tree. 
This typical height can range from doubly-logarithmic to logarithmic or even a power  of the number of nodes of the tree, depending on the form of the memory. 
\end{abstract}

\section{Introduction and statement of the results}
A \defin{random walk with preferential relocation} is a process heuristically defined as follows: it behaves as a (discrete-time) random walk on $\mathbb Z^d$ except at some (relocation) times at which it jumps to a value it has already visited with probability proportional to its number of visits there. 

This model has been proposed in the physics literature by Boyer and Solis-Salas~\cite{PhysRevLett.112.240601} as a model for animal foraging behaviour, and they argue that ``the model exhibits good agreement with data of free-ranging capuchin monkeys''.
In the case when the underlying random walk is a simple random walk on $\mathbb Z$
and the times between relocations are i.i.d.\ geometric random variables,
they show that this ``monkey walk'' satisfies a central limit theorem by calculating moments.
This central limit theorem is similar to the central limit theorem of the simple random walk, except that the appropriate (standard-deviation) scaling is $\sqrt{\log n}$ instead of $\sqrt{n}$ as in the simple-random-walk case. 
Boyer and Solis-Solas argue that ``The very slow growth [of the variance of the position of the monkey with time] in our model agrees qualitatively with the fact that most animals have limited diffusion or home ranges". 
In a subsequent paper, Boyer and Pineda~\cite{PhysRevE.93.022103} generalize the results of ~\cite{PhysRevLett.112.240601}
to the case when the underlying random walk has power-law-tailed increments; a limit theorem with stable limit is exhibited in that case. 
Finally, the model can also be extended by adding the effect of memory, so that the walker prefers to visit more recent places: see, e.g.,~\cite{doi:10.1088/1742-5468/aa58b6} and~\cite{FalconEtAl17} where a trapping site is added to the model.

\medskip
{\bf The main contribution of this paper} is two-fold: firstly, we prove rigorously
the recent results of~\cite{PhysRevLett.112.240601,PhysRevE.93.022103,doi:10.1088/1742-5468/aa58b6}; 
secondly, the robustness of our proof techniques allow 
us to obtain results for a much greater range of models.
Indeed, we allow the time to be discrete or continuous, 
the underlying process to be any Markov process on $\mathbb R^d$, 
and the times between relocations (also called ``run-lengths'') 
to have other distributions than the geometric one as long as they are i.i.d.\ (and satisfy moment conditions). 
As in~\cite{doi:10.1088/1742-5468/aa58b6}, 
we are able to generalise the model further by allowing 
the memory of the walker to 
decay with time:
the probability to relocate to a visited site depends 
on how long ago this site was visited.
We are able to treat a large class of ``memory kernels'' (the function that models the memory of the walker),
comprising most of the cases introduced 
by~\cite{doi:10.1088/1742-5468/aa58b6} 
as well as some new interesting cases.

Under moment assumptions on the moments of the run-lengths, 
we prove that, if the original Markov process verifies a central limit theorem (we actually allow for more general distributional limit theorems that we call ``ergodic limit theorems''), then the version of this process with reinforced relocations does too. 
We are also able to prove that, if the original Markov process verifies an 
ergodic limit theorem, then the weighted occupation measure of the process with relocations converges in probability (for the weak topology) to a deterministic limit.

The crucial step in our proof is reducing the study of these random walks to the analysis of a new model of random trees that we call the weighted random recursive tree ({\sc wrrt}); the height of a typical vertex in these trees is responsible for the $\log$-scaling exhibited in \cite{PhysRevLett.112.240601}. This key step allows us to state our results in full generality, without having to specify the underlying Markov process we consider.

This new approach allows us to prove a local limit theorem for the process with random reinforced relocations.
Finally, Boyer and Solis-Salas \cite{PhysRevE.93.022103} ask in their paper whether the monkey walk is recurrent in all dimensions; we answer this question both when the underlying process (process without the relocations) is the simple symmetric random walk, and when the underlying process is the Brownian motion.

\medskip
{\bf Discussion of the context of related random walks.}
Two principle features of the monkey walk are that it is non-Markovian 
(the walker needs to remember all its past) and that it involves reinforcement, and
the non-Markovianity is the main difficulty in the rigorous analysis.
The Monkey walk is a rare example of a non-Markovian random walk where one can obtain precise asymptotic results, with a great level of generality (all dimensions, discrete- or continuous-time, no need to specify the underlying Markov process, etc). We now review other non-Markovian reinforced random walks studied in the literature.

The most famous, and most extensively-studied,
non-Markovian random walk with reinforcement is
the reinforced random walk introduced by Coppersmith and Diaconis in 1987.
The dynamics of this model are very different from the monkey walk; indeed, in this model, every edge of the lattice has originally weight~$\alpha$, and the walker starts at the origin; at every step, the walker crosses a neighbouring edge with probability proportional to the weight of the edge, and the weight of the crossed edge is increased by~$1$. Different variant of this edge-reinforced process (and its vertex-reinforced version) have been studied in the literature on the $d$-dimensional lattice 
as well as on arbitrary finite and infinite graphs%
, and in particular on trees; 
see~\cite{Pemantle07} for a survey.

Two other models of non-Markovian random walks with reinforcement are the so-called elephant random walk, introduced in the physics literature by \cite{PhysRevE.70.045101}, and the shark walk introduced very recently by~\cite{2017arXiv171005671B}. We now briefly describe both these random walks to emphasise how they are very different from the ``monkey walk'' studied in this paper.

The elephant walk on $\mathbb Z$ depends on a parameter $p\in[0,1]$ and has jumps of size $1$. At every time-step, 
the elephant chooses a uniform time in its past, and, with probability~$p$ 
takes the same step it took at this random time, or, with probability $(1-p)$, 
decides at random which direction to follow. 
\cite{PhysRevE.94.052134} 
showed that the elephant random walk can be successfully studied by using 
the theory of generalized P\'olya processes (and in particular the results of \cite{MR2040966}):
for $p\leq \nicefrac34$, the elephant walk is diffusive, and for $p>\nicefrac34$, it is super-diffusive.
An alternative proof was developed independently by \cite{MR3652225}, and 
\cite{2017arXiv170907345B} generalized these results to higher dimensions.
The shark walk is a generalization of the elephant random walk except that the steps taken by the walker have heavy tails.

The elephant random walk was so named because of the popular belief that elephants have a long memory; this name thus refers to the fact that the walk is non-Markovian but, as far as we know, there is no link between this random walk and the actual behaviour of free-ranging elephants.

In contrast, the shark walk and the ``monkey walk'' 
studied in this paper have a closer link to actual animal behaviour.
The shark random walk was named because, according to the physics literature (see, e.g.~\cite{SimsEtAl08}), marine predators' foraging behaviour exhibits similarities with random walks whose standard deviation grows faster than the square root of the time.
Similarly, \cite{PhysRevLett.112.240601} argue (using measured-data of capuchin monkey) that monkeys' foraging behaviour, and in particular the number of distinct sites visited by the monkey as a function of time, exhibits similarities with the monkey walk.


\smallskip
Aside from non-Markovianity and reinforcement, the third main feature of our monkey walk is the fading memory. Although it is natural and often studied in the physics literature (see, e.g.\ \cite{Alves, Moura} for the elephant random walk with decaying memory,~\cite{PhysRevLett.112.240601} for the monkey walk, \cite{doi:10.1088/1742-5468/aa58b6} for diffusions with random reinforced relocations), we are not aware of any probabilistic result about non-Markovian random walks with decaying memory. The model studied in~\cite{vdH01} is the (deterministic) mean-field version of a weakly self-interaction random walk where we allow the trajectory to self-intersect with some probability that increases with the time between the two visits at this intersection.

\smallskip
Finally, we mention random walks with stochastic restarts; although they are Markovian and non-reinforced, their definition is similar to the monkey walk except that at relocation times, the walker always goes back to where it started. The literature (see e.g.~\cite{EvansMajumdar11, EvansMajumdar14}) mainly focuses on their hitting times properties as these walks are used as search algorithms.

\subsection{Definition of the model}
We consider time to be either discrete or continuous, i.e.\ $\mc{T}$ equal  to $\na$ or  $[0,\infty)$. 
The ``monkey {\cec} process'' $X=(X(t))_{t\in\mc T}$ is a stochastic process that depends on three parameters: a semi-group $P=(P_t)_{t\in \mc{T}}$, 
a probability distribution $\fr$ on $\mc T$ such that $\fr(\{0\}) =0$,
and a function $\mu : \mc T\to \mathbb R^+$ called the memory kernel.

Let $Z$ be a Markov process of semi-group $P$ on $\re^d$;
for all $x\in\mathbb R^d$, we denote by $\p_x$ the law of $Z$ started at $x$, 
and by $\p_x^t$  the law of $\paren{Z(s),s<t}$ under $\p_x$.

Let $\bs L = \paren{L_i, i\geq 1}$ be a sequence of i.i.d.\ random variables {\cec with} distribution~$\fr$. We let $T_0 = 0$, and, for all $n\geq 1$, $T_n = \sum_{i=1}^n L_i$, and call these random elements of $\mc T$ the ``relocation times''.

\begin{figure}
\begin{center}
\includegraphics[width=9.5cm]{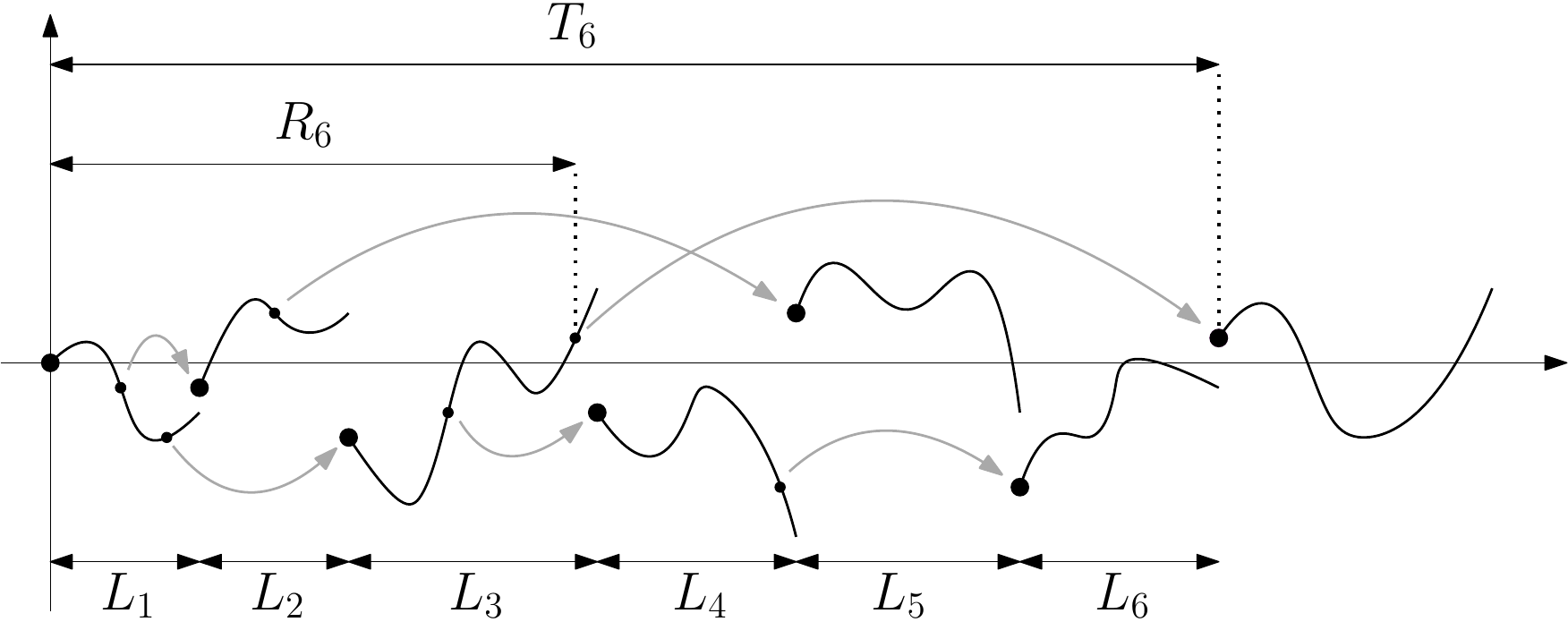}
\end{center}
\caption{The monkey {{\cec}} process: an illustration of the definition in dimension~1. Time is on the horizontal axis, position on the vertical axis. The grey arrows point from $X(R_n)$ to $X(T_n)$, which are equal, by definition, for all $n\geq 1$.
}
\label{fig:def}
\end{figure}

We define the monkey {\cec} process recursively as follows (the notations are illustrated in Figure~\ref{fig:def}):
Choose a starting point $X(0)\in\re^d$, and draw $(X(s)\colon s< T_1)$ at random according to~$\mathbb P_0^{T_1}$.
For all $n\geq 1$, given $(X(s)\colon s<T_n)$,
\begin{itemize}
\item draw a random variable $R_n$ in $[0, T_n)$ according to the following distribution\footnote{In the whole paper, if $\mc T = \{0,1,\ldots\}$, we use the integral notation for a sum: $\int_x^y \mu(u)\mathrm du := \sum_{k=x}^{y-1} \mu(k)$.}
\begin{equation}\label{eq:def_Rn}
\mathbb P(R_n\leq x \mid T_n)= \frac{\int_0^x \mu(u)\,\mathrm du}{\int_0^{T_n} \mu(u)\,\mathrm du};
\end{equation}
\item set $V_{n+1} = X(R_n)$ and draw $(X(s)\colon T_n\leq s<T_{n+1})$ according to $\mathbb P_{V_{n+1}}^{L_{n+1}}$.
\end{itemize}
We say that~$X$ is the {\cec} monkey process with preferential relocation and fading memory  -or equivalently, monkey {\cec} process- of semigroup $P$, run-length distribution $\fr$ (being the distribution of the $L_i$'s) and memory kernel $\mu$, and denote this by $X=\mrw{P,\fr,\mu}$.

\bigskip
\noindent{\bf Choice of the memory kernel:} 
The techniques of this paper are robust in the choice of the memory kernel, 
but they become more explicit when its integral  is available in closed form. 
In this paper, we consider the following  two parametric families of memory kernels to cover a large range of phenomena: 
for all $x\in\mc T$,
\begin{equation}\label{eq:memory_kernel}
\mu_1(x) = \frac{\alpha}{x}\log (x)^{\alpha-1}\mathrm e^{\beta\log (x)^\alpha},
\end{equation}
where $\alpha>0$ and $\beta\geq 0$ (set $\mu_1(x)=0$ if $x=0$) and
\begin{equation}\label{eq:memory_kernel2}
\mu_2(x) = \gamma\delta x^{\delta-1}e^{\gamma x^\delta},
\end{equation}where  $\delta\in (0,1/2]$ (for technical reasons to be discussed {\cec later}) and $\gamma>0$.

Note that, when $\alpha=\beta = 1$, the memory kernel $\mu_1$ is uniform, 
meaning that, at each relocation time, the walker chooses a time uniformly in its past, and relocates to where it was at that random time; this is the original process introduced by Boyer and Solis-Salas~\cite{PhysRevE.93.022103}.
When $\alpha>1$, the memory kernel increases, which indeed corresponds to the idea of a fading memory. When $\alpha<1$, the memory kernel decreases; although this case is less natural when thinking about the memory of a walker, we include it in the study since this is done at no additional cost. 
When $\beta=0$ and $\alpha=1$, we get $\mu_1(x)= \nicefrac1x$, which corresponds to Case~1 in \cite{doi:10.1088/1742-5468/aa58b6} and when $\alpha=1$ and $\beta\neq 0$ then $\mu_1(x)= x^{\beta-1}$, corresponding to Case~$2$ of \cite{doi:10.1088/1742-5468/aa58b6}. 
Finally, $\mu_2$ coresponds to Case~3 of of \cite{doi:10.1088/1742-5468/aa58b6}. 
The memory kernel $\mu_1$ with $\alpha\neq 1$ is not considered in~\cite{doi:10.1088/1742-5468/aa58b6}, and thus allows us to cover an even larger range of phenomena.

\subsection{Central limit theorem}
The first result of this paper states that, if $(Z(t)-b_t)/a_t$ converges weakly, then so does $(X(t)-b_{\kappa_2 s(t)})/a_{\kappa_2 s(t)}$ where, $\kappa_2 = \mathbb E L^2/(2\mathbb E L)$, and where, for all $x\in[0,\infty)$,
\begin{equation}\label{eq:def_s}
s(x)=\begin{cases}
\log(x)^\alpha& \text{with memory kernel $\mu_1$ when $\beta\neq 0$,}\\
\alpha \log\log(x)& \text{with memory kernel $\mu_1$ when $\beta= 0$,}\\
\gamma x^\delta& \text{with memory kernel $\mu_2$}.
\end{cases}
\end{equation}
\ger{Note that $b_t$ gives the deterministic drift (or bias) of $Z$, while $a_t$ gives the asymptotic size of the fluctuations of $Z$ around this deterministic bias; they would equal the asymptotic mean and standard deviation of $Z$ in the context of the central limit theorem.}
{\cec In particular, our result says that, if the standard deviation of $Z(t)$ is proportional to $a_t$, then the standard deviation of $X(t)$ is of asymptotically proportional to $a_{\kappa_2 s(t)}$.}
\ger{In other words, the monkey process $X$ behaves as $Z$ taken at time $\kappa_2s(t)$; the parameter $s(t)$ can be thought of as a time-change which slows down the order of magnitude of $X$ when compared to $Z$. This idea of time-change is formalized in our proof where we show that, for each fixed $t$,  $X(t)\stackrel{\sss d}{=}Z(S(t))$, where $S(t)$ is a random time-change.} {\cec The rest of the proof then relies on proving that $S(t)\approx \kappa_2 s(t)$ with sufficient accuracy.}
Note that~$\kappa_2$ only depends on the run-length distribution, while $s({\cec x})$ only depends on the memory kernel.
The precise statement of our first result uses the following notion that generalizes the central limit theorem:
\begin{definition} 
We say that {\cec a Markov process} $Z$ is $\paren{a_t,b_t}$-ergodic if $\paren{Z(t)-b_t}/a_t$ converges in distribution under $\p_x$ to a law $\gamma$ that does not depend on $x$. We call $\gamma$ the limiting distribution of~$Z$. 
\end{definition}

{\cec If $Z$ is $\paren{a_t,b_t}$-ergodic, the function $(t\mapsto b_t)$ gives the deterministic drift (or bias) of $Z$, while $(t\mapsto a_t)$ gives the size of the random fluctuations of~$Z$ around this deterministic bias at time $t$. Examples of $(a_t,b_t)$-ergodic Markov process are given in Section~\ref{sub:examples}.}

\begin{theorem}
\label{monkeyMarkovLimitTheorem}
Let $X=\mrw{P,\fr,\mu}$ where $P$ is a semi-group on $\mathbb R^d$, $\fr$ a probability distribution on $\mc T$, and $\mu$  is any one of $\mu_1$ or $\mu_2$. Let $L$ be a random variable of distribution $\fr$, and $Z$ a Markov process of semi-group~$P$.
Assume that 
\begin{enumerate}[(i)]
\item $\mathbb E L^8<\infty$, 
\item there exists two functions $(a_t, b_t)_{t\in\mc T}$ such that the Markov process~$Z$ is $\paren{a_t, b_t}$-ergodic 
with limiting distribution $\gamma$, and
\item for all $x\in\re$, the following limits exist and are finite:
\begin{description}
\item[(Scaling)] 
$\displaystyle f(x)=\lim_{t\to\infty} \frac{a_{t+x\sqrt{t}+\eps_t}}{a_t}$ and 
$\displaystyle g(x)=\frac{b_{t+x\sqrt{t}+\eps_t}-b_t}{a_t}$ whenever $\eps_t=\imf{o}{\sqrt{t}}$. 
\end{description}
\end{enumerate}
Then, in distribution\footnote{In the whole paper, we use the notation $\xrightarrow{\sss d}$ to denote convergence in distribution.} when $t\to\infty$, we have
\begin{equation}\label{eq:clt_monkey}
\frac{X(t)-b_{\kappa_2 s(t)}}{a_{\kappa_2 s(t)}}\xrightarrow{\sss d}
\imf{f}{\Omega}\Gamma
+\imf{g}{\Omega},
\end{equation}where 
$\kappa_i = \mathbb E L^i/(i\mathbb E L)$ for $i\in\{2,3\}$, 
$\Omega\sim \mc N(0,\nicefrac{\kappa_3}{\kappa_2})$ and $\Gamma\sim\gamma$ are independent.
\end{theorem}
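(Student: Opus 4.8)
The plan is to exploit the tree representation promised in the introduction: associate to the monkey process a growing weighted random recursive tree whose $n$-th vertex carries the run-length $L_n$ and such that the relocation mechanism \eqref{eq:def_Rn} is exactly the rule for attaching new vertices. The key structural identity to establish first is the distributional equality $X(t)\eqd Z(S(t))$ for each fixed $t$, where $S(t)$ is the sum of the run-lengths along the ancestral line, in the tree, of the block containing time $t$. Concretely, if time $t$ falls in the $N(t)$-th run, then walking from that vertex up to the root of the {\sc wrrt} one collects a finite sequence of run-lengths $L_{i_1},\dots,L_{i_k}$, and the Markov property of $Z$ together with the relocation construction gives $X(t)\eqd Z\bigl(\sum_j L_{i_j}\bigr)=:Z(S(t))$. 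This reduces the theorem to two independent tasks: (a) controlling the asymptotics of the random time-change $S(t)$, and (b) transferring the $(a_t,b_t)$-ergodicity of $Z$ through this random time.

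For task (a), I would first handle $N(t)$, the number of completed runs by time $t$: by the renewal theorem, $N(t)/t\to 1/\mathbb E L$ almost surely, with Gaussian fluctuations of order $\sqrt t$ governed by $\var{L}$, so $N(t)\approx t/\mathbb E L$. Then $S(t)$ is the sum of $L$'s along the ancestral path of a ``typical'' vertex (typical because $N(t)$ is essentially deterministic and the vertex labelled $N(t)$ is, up to lower-order corrections, uniform among the first $N(t)$ vertices) in the {\sc wrrt} of size $\approx t/\mathbb E L$. The height of such a vertex is $\approx s(t)$ for the three regimes in \eqref{eq:def_s} — this is the heart of the tree analysis and I would expect it to be quoted from the earlier sections of the paper: the memory kernel $\mu$ determines the attachment weights, hence the height profile, with $\mu_1$, $\beta\neq0$ giving height $\log(\cdot)^\alpha$, $\mu_1$, $\beta=0$ giving the doubly-logarithmic $\alpha\log\log(\cdot)$, and $\mu_2$ the power $\gamma(\cdot)^\delta$. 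Each $L_{i_j}$ along the path has mean $\mathbb E L$, but conditioning on lying on the ancestral path size-biases it, so the path-sum has increments of mean $\mathbb E L^2/\mathbb E L=2\kappa_2$; summing $\approx s(t)$ of them and dividing appropriately yields $S(t)\approx \kappa_2 s(t)$, with a Gaussian correction: $\bigl(S(t)-\kappa_2 s(t)\bigr)/\sqrt{s(t)}$ converges to a centered Gaussian whose variance, after the size-biasing bookkeeping, comes out to $\kappa_3/\kappa_2$. This is where the moment hypothesis $\mathbb E L^8<\infty$ is spent — one needs enough moments to control the path-sum fluctuations uniformly and to kill the error from replacing $N(t)$ and the typical-vertex label by their deterministic surrogates, via a second-moment/Chebyshev argument along the tree.

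For task (b), write $X(t)\eqd Z(S(t))$ and $S(t)=\kappa_2 s(t) + \sqrt{s(t)}\,\Omega_t$ with $\Omega_t\xrightarrow{d}\Omega\sim\mc N(0,\kappa_3/\kappa_2)$, noting $\sqrt{s(t)}=o(\kappa_2 s(t))$ so the perturbation $\eps_t$ is of size $o(\sqrt{\kappa_2 s(t)})$. Then
\begin{align*}
\frac{X(t)-b_{\kappa_2 s(t)}}{a_{\kappa_2 s(t)}}
&\eqd \frac{a_{S(t)}}{a_{\kappa_2 s(t)}}\cdot\frac{Z(S(t))-b_{S(t)}}{a_{S(t)}}
+\frac{b_{S(t)}-b_{\kappa_2 s(t)}}{a_{\kappa_2 s(t)}}.
\end{align*}
Conditionally on $S(t)$ (equivalently on the tree), $\bigl(Z(S(t))-b_{S(t)}\bigr)/a_{S(t)}\xrightarrow{d}\Gamma\sim\gamma$ by $(a_t,b_t)$-ergodicity, uniformly in the starting point (which is crucial, since the relocation resets $Z$ at a random, tree-dependent point), and the limit does not depend on $S(t)$, giving asymptotic independence of $\Gamma$ from $\Omega$. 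The two deterministic ratios are handled by the Scaling hypothesis (iii): with $x=\Omega$ (frozen) and the $\sqrt t$ there playing the role of $\sqrt{\kappa_2 s(t)}$, they converge to $f(\Omega)$ and $g(\Omega)$ respectively. Assembling via Slutsky and the continuous mapping theorem yields \eqref{eq:clt_monkey}. The main obstacle is task (a): making rigorous the passage from ``the vertex labelled $N(t)$'' to ``a uniformly chosen typical vertex'' and establishing the joint CLT for $(N(t), S(t))$ along the {\sc wrrt} with the correct variance $\kappa_3/\kappa_2$ — in particular disentangling the randomness of the renewal process from the randomness of the tree structure, which is exactly what the eighth-moment assumption is there to make work.
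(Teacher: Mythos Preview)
Your overall architecture matches the paper's: establish $X(t)\eqd Z(S(t))$ via the tree, prove a CLT for $S(t)$, then combine with the $(a_t,b_t)$-ergodicity of $Z$ via Skorokhod representation and the Scaling hypothesis. However, your identification of $S(t)$ is wrong, and this is not a cosmetic slip.

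You claim $S(t)=\sum_j L_{i_j}$, the sum of full run-lengths along the ancestral path. That is false: at each relocation the walker jumps to a \emph{random point inside} the target run, not to its start. If run $i$ is an ancestor, the walker lands at time $T_{i-1}+F_i$ with $\mathbb P_{\bs L}(F_i\le x)=\int_{T_{i-1}}^{T_{i-1}+x}\mu\big/\int_{T_{i-1}}^{T_i}\mu$, and only this partial piece $F_i$ (not $L_i$) contributes to the amount of fresh $Z$-time accumulated. The paper's decomposition is $S(t)=\sum_{\ell=1}^{K(t)}F_{i_\ell(t)}+A(t)$, where $A(t)$ is the age since the last relocation. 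With full $L$'s your own size-bias computation gives a mean of $2\kappa_2 s(t)$, not $\kappa_2 s(t)$; the phrase ``dividing appropriately'' is hiding an actual factor-of-two error, and the variance constant $\kappa_3/\kappa_2$ cannot be recovered from your description either.

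There is a second, related error: the vertex whose ancestry you trace is not the $N(t)$-th vertex, nor is anything ``uniform among the first $N(t)$ vertices''. The relevant node is $\omega_{N(t)}$, the \emph{parent} of the vertex corresponding to the run straddling $t$; by construction of the relocation mechanism, $\omega_{N(t)}$ is chosen \emph{weight-proportionally} in $\tau_{N(t)}$ with weights $W_i=\int_{T_{i-1}}^{T_i}\mu$. It is this weight-proportional choice, combined with the partial increments $F_i$, that produces $\kappa_2=\mathbb E L^2/(2\mathbb E L)$: for uniform memory $\mu\equiv1$, vertex $i$ is on the path with probability $L_i/T_i$ and contributes $F_i$ of conditional mean $L_i/2$, so $\mathbb E_{\bs L}\Phi(u_n)\approx\sum_i L_i^2/(2T_i)\approx\kappa_2\log n$. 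Your size-bias story is really about this weight-proportional parent selection, not about the path run-lengths themselves. Once these two points are corrected, the rest of your outline (Skorokhod, Scaling, Slutsky) is exactly what the paper does.
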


%

{\cec 
\begin{remark}[Discussion of Assumption (i)] 
Proving that $\big(S(t) - \kappa_2s(t)\big)/ \sqrt{\kappa_3s(t)}\xrightarrow{\sss d}\Lambda\sim\mathcal N(0,1)$ is done by applying the strong law of large numbers and central limit theorems to sum of independent (but not identically distributed random variables). To apply these limit theorems, one needs assumptions on the moments of the summands; this is where Assumption (i) is used. The forthcoming Lemmas \ref{lem:exp_Fi}, \ref{lem:sums} and \ref{lem:sumsF_iW_i} make this technical point very precise.
\end{remark}
}

\begin{remark}[Discussion on the value of $\delta$ when $\mu=\mu_2$]
We prove that Theorem~\ref{monkeyMarkovLimitTheorem} holds for $\delta=\nicefrac12$, {\cec but our proof cannot be easily generalized to} larger $\delta$ except if the underlying Markov process is such that $f\equiv 1$ and $g\equiv 0$. 
{\cec The reason is that, when $\delta<\nicefrac12$, the distribution of $\Omega$ in  Theorem~\ref{monkeyMarkovLimitTheorem} is a priori unknown.}
Boyer, Evans and Majumdar~\cite{doi:10.1088/1742-5468/aa58b6} argue that a central limit theorem holds for a (standard) Brownian motion with relocations and memory kernel $\mu_2$, for all $\delta\in(0,1]$; in this case, we indeed have $f\equiv 1$ and $g\equiv 0$. More complicated phenomena are expected when $\mu=\mu_2$ and $\delta>\nicefrac12$ and when the functions $f$ and $g$ are non-trivial; 
we leave this as an open problem.
\end{remark}

{\cec
\begin{remark}[A quenched version of this result]
We actually prove a stronger, {\it quenched}, version of this result: Denote by $\bs L = (L_i)_{i\geq 1}$ the run-lengths of the monkey process~$X$. Then, under the assumptions of Theorem~\ref{monkeyMarkovLimitTheorem}, Equation~\eqref{eq:clt_monkey} holds conditionally on $\bs L$, $\bs L$-almost surely.
\end{remark}
}


\subsection{Examples} \label{sub:examples}
In this section, we briefly show how to apply Theorem~\ref{monkeyMarkovLimitTheorem} to different Markov semi-groups $P$.
In all examples, we assume that the run-lengths $(L_i)_{i\geq 1}$ satisfy the assumptions of Theorem~\ref{monkeyMarkovLimitTheorem}.

\medskip
{\bf The simple random walk case --}
Assume that $P$ is the semi-group of a simple random walk (say on $\mathbb R$) 
whose increments have finite mean $\mu$ and finite variance $\sigma^2$.
In that case, the central limit theorem gives that
\[\frac{Z(t) - \mu t}{\sigma\sqrt t} \xrightarrow{\sss d} \imf{\mc{N}}{0,1};\]
in other words, the simple random walk $Z$ is $(\sigma\sqrt t, \mu t)$-ergodic with limiting distribution $\gamma = \mc N(0,1)$.
Therefore, $f(x) = 1$, and $g(x) = \nicefrac{\mu x}{\sigma}$ for all $x\in\mathbb R$, 
and Theorem~\ref{monkeyMarkovLimitTheorem} implies that
\[\frac{X(t) - \mu\kappa_2 s(t)}{\sigma\sqrt{\kappa_2s(t)}} 
\xrightarrow{\sss d}  \imf{\mc{N}}{0,1+\frac{\mu^2 \kappa_3}{\sigma^2\kappa_2}}
\quad\Leftrightarrow\quad
\frac{X(t) - \mu\kappa_2 s(t)}{\sqrt{s(t)}} 
\xrightarrow{\sss d}  \imf{\mc{N}}{0,\sigma^2\kappa_2+\mu^2 \kappa_3}
,\]
or, equivalently,
\begin{equation}
\label{eq:MonkeyWalkCLT}
\frac{X(t)- \mu\kappa_2s(t)}{\sqrt{(\sigma^2\kappa_2 + \mu^2\kappa_3)s(t)}} \xrightarrow{\sss d}  \imf{\mc{N}}{0,1}.
\end{equation}
{\cec From the definition of $s(t)$ (see Equation~\eqref{eq:def_s}), we can see that this monkey process is diffusive only when $\mu = \mu_2$ and $\delta = \nicefrac12$; it is otherwise sub-diffusive.}

We can apply Equation~\eqref{eq:MonkeyWalkCLT} to the particular case studied by~\cite{PhysRevLett.112.240601}: In that case, the $L_i$'s are geometric of parameter $q\in(0,1)$, 
implying that $\kappa_2 = (2-q)/(2q)$. 
Moreover, the memory kernel is $\mu_1$, with $\alpha=\beta=1$, $\mu=0$ and $\sigma^2 = 1$, implying that
\[\frac{X(t)}{\sqrt{(2-q)\log t/(2q)}} \xrightarrow{\sss d} \mc N(0,1),\]
{\cec as claimed in~\cite{PhysRevLett.112.240601}.}

\medskip
{\bf The random walk case with heavy-tailed increments --}
Assume that $P$ is the semi-group of a simple random walk
whose real-valued increments $(\Delta_i)_{i\geq 1}$ are independent copies of~$\Delta$,
and satisfy
\[
\mathbb P(\Delta\geq u)\sim c_+ u^{{\cec -}\omega}\ell(u)
\quad\text{and}\quad
\mathbb P(\Delta\leq -u)\sim c_- u^{{\cec -}\omega}\ell(u)
\]
as $u\to\infty$,
where $1<\omega <2$, and $\ell$ is slowly varying at infinity. 
Then, there exist a slowly varying function $\tilde \ell$ such that, if we set $a_t = t^{\nicefrac1\omega}\tilde \ell(t)$, $b_t = 0$ if $\omega<1$ and $b_t = \mu t$ otherwise, then we have
\[\frac{Z(t)- b_t}{a_t} \xrightarrow{\sss d} \Phi_{\omega},\]
where $\Phi_{\omega}$ is some $\omega$-stable random variable.  
In both cases ($0<\omega<1$ and $1<\omega<2$), 
$f \equiv 1$, and $g \equiv 0$; therefore, 
Theorem~\ref{monkeyMarkovLimitTheorem} gives
\begin{linenomath}
\begin{align*}
\frac{X(t)}{s(t)^{\nicefrac1\omega}} 
&\xrightarrow{\sss d} \Phi_\omega, \text{ if } 0<\omega<1,\\
\frac{X(t) - \mu s(t)}{s(t)^{\nicefrac1\omega}} 
&\xrightarrow{\sss d} \Phi_\omega,  \text{ if } 1<\omega<2.
\end{align*}
\end{linenomath}
{\cec From the definition of $s(t)$ (see Equation~\eqref{eq:def_s}), we see that this monkey process is sub-diffusive when $\mu = \mu_1$; it is also sub-diffusive when $\mu=\mu_2$ and $\delta < \nicefrac\omega2$. It is diffusive when $\mu = \mu_2$ and $\delta = \nicefrac\omega2$, and becomes super-diffusive when $\delta \in (\nicefrac\omega2, \nicefrac12]$.}

\medskip
{\bf The Brownian motion case --}
Assume that $P$ is the semi-group of the Brownian motion on~$\mathbb R$ with drift $c\in\mathbb R$. Therefore, $(Z(t) - ct)/{\sqrt t} \sim \imf{\mc{N}}{0,1}$, implying that $a_t = \sqrt t$ and $b_t = ct$ for all $t\in[0,\infty)$.
Therefore, $f(x) = 1$ and $g(x) = cx/\sqrt \kappa_2$ for all $x\in\mathbb R$, and Theorem~\ref{monkeyMarkovLimitTheorem} gives
\[\frac{X(t) - c\kappa_2s(t)}{\sqrt{\kappa_2s(t)}} 
\xrightarrow{\sss d}  \imf{\mc{N}}{0,1+\frac{c^2 \kappa_3}{\kappa_2}}
\quad\Leftrightarrow\quad
\frac{X(t) - c\kappa_2s(t)}{\sqrt{s(t)}} 
\xrightarrow{\sss d}  \imf{\mc{N}}{0,\kappa_2+c^2 \kappa_3}.\]
{\cec Like in the first example, this monkey process is diffusive only if $\mu=\mu_2$ and $\delta=\nicefrac12$; it is otherwise sub-diffusive.}


\medskip
\begin{remark}[Discussion on the possible values for the function $f$] The definition of $(a_t,b_t)$-ergodicity is related to Lamperti's assumption that $((Z(xt)-b_t)/a_t)_{x\geq 0}$ converges in the sense of finite-dimensional distributions to a process $(Z^\infty(x))_{x\geq 0}$. 
\ger{Note that we only assume convergence of the one-dimensional distributions.}
Lamperti's assumption was introduced in  \cite{MR0138128} and implies that  $Z^\infty$ is a self-similar process, that $a$ and $b$ are regularly varying and that therefore the function $f$ defined in Theorem~\ref{monkeyMarkovLimitTheorem} equals~$1$. 
In particular, $f=1$ when $Z$ is a random walk in the domain of attraction of stable law (which includes the Gaussian case). In the particular case of non-negative Markov chains, invariance principles with self-similar limits can be found in \cite{MR2854770} and \cite{MR3543905}. 
More complex functions $f$ can arise when the process $Z$ is not self-similar. For example, recall that if $Y$ is the standard Yule process, then $Y(t)/e^t$ converges to a standard exponential random variable; this implies that $Z(t):=Y(\sqrt{t})$ is $(\e^{\sqrt t},0)$-ergodic. In that case, $f$ is equal to $x/(2\sqrt{\kappa_2})$, but since $Z$ is not homogeneous, its version with reinforced relocation does not fit in our framework.
Also note that the function $g$ can be arbitrary, 
which we illustrate again, for simplicity,  with a non-homogeneous Markov process by taking $Z(t)=B(t)+g(t)$ where $B$ is the standard Brownian motion.
\end{remark}

\subsection{Convergence of the occupation measure}
Under the assumptions of Theorem~\ref{monkeyMarkovLimitTheorem}, we can actually prove a stronger result; namely convergence of the {weighted} occupation measure {\cec of $X$} to a deterministic limit. 
{\bl The {weighted} occupation measure of the process $X$ on $[0,t]$ is defined as follows: let\footnote{\cec In the whole paper, we sometimes omit the variable and differential from the integrals: $\int \mu := \int \mu(u)\mathrm du$.} $\bar\mu(x) = \int_0^x \mu$, for all $x\geq 0$ and 
\begin{lesn}
\imf{\pi_t}{\mathcal B}=\frac1{\bar\mu(t)}\int_0^t {\mu(s)}\mathbf{1}_{X_s\in \mathcal B}\, ds
\end{lesn}for all Borel set $\mathcal B\subseteq {\re^d}$.
{When $\mu\equiv 1$,} for all Borel set $\mathcal B$, $\pi_t(\mathcal B)$ is the proportion of the time spent by the monkey process in $\mathcal B$ until time $t$.
 {When $\mu\not\equiv 1$ this time spent in $\mathcal B$ is weighted according to~$\mu$.} Note that $\pi_t$ is a random probability measure on $\re^d$; it is the conditional distribution (given the trajectory up to time $t$) of the position the walker would relocate to if a relocation event would happen at time~$t$. Therefore, $\pi_t$ can be thought of as the memory that the walker has of its trajectory up to time~$t$.
We prove that $\pi_t$ converges in distribution to a deterministic probability distribution $\pi_{\infty}$ on the space of all probability distributions on $\mathbb R^d$ equipped with the weak topology; in other words, for all continuous and bounded function $\varphi$ from $\mathbb R^d$ to $\mathbb R$,
\[\int_{\mathbb R^d}\varphi(x) \mathrm d\pi_t(x) 
\xrightarrow{\sss p} \int_{\mathbb R^d} \varphi(x) \mathrm d\pi_\infty(x),\]
when $t\to\infty$.\footnote{In the whole paper, we use $\xrightarrow{\sss p}$ to denote convergence in probability.}
}
\begin{theorem}\label{th:cv_occupation}
Let $\pi_\infty$ be the distribution of the random variable $f(\Omega)\Gamma + g(\Omega)$.
{\cec Assume that the assumptions of Theorem \ref{monkeyMarkovLimitTheorem} hold, and that, additionally, $\delta\in(0,\nicefrac12)$ when $\mu=\mu_2$.}
Then,  $\imf{\pi_t}{a_{\kappa_2s(t)}\cdot +b_{\kappa_2s(t)}}\xrightarrow{\sss p} \pi_\infty$ when $t\to\infty$, on the set of probability measures on $\re^d$ equipped with the weak topology.
\end{theorem}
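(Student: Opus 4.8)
\medskip
\noindent\textbf{Plan for the proof of Theorem~\ref{th:cv_occupation}.}
The plan is to recognise $\imf{\pi_t}{a_\tau\cdot+b_\tau}$ (where $\tau:=\kappa_2 s(t)$) as the conditional law of a single random point, and then to establish convergence to the constant $\pi_\infty$ by a first- and second-moment computation. Let $\mathcal F_t:=\sigma(X_s\colon s\le t)$ and let $R_t$ be a random time, \emph{independent of $\mathcal F_t$}, with law $\mathbb P(R_t\le x)=\bar\mu(x)/\bar\mu(t)$ on $[0,t]$; this is precisely the relocation rule \eqref{eq:def_Rn} with the deterministic time $t$ in place of $T_n$. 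For bounded continuous $\varphi\colon\re^d\to\re$, writing $\varphi_t(y):=\imf{\varphi}{(y-b_\tau)/a_\tau}$, the change of variables $x=a_\tau y+b_\tau$ gives $\int\varphi\,\mathrm d\big(\imf{\pi_t}{a_\tau\cdot+b_\tau}\big)=\espc{\varphi_t(X(R_t))}{\mathcal F_t}$. Since $\pi_\infty$ is deterministic and the weak topology on $\mathcal P(\re^d)$ is metrisable, it suffices to check, for $\varphi$ ranging over a countable convergence-determining family (which we may take to contain, for each $A$, a continuous function that is $1$ on $B(0,A)$ and $0$ off $B(0,A+1)$, so that tightness in probability of $\{\imf{\pi_t}{a_\tau\cdot+b_\tau}\}_t$ comes out of the argument), that $\espc{\varphi_t(X(R_t))}{\mathcal F_t}\xrightarrow{\sss p}\int\varphi\,\mathrm d\pi_\infty$. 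As the limit is constant, this follows from the two estimates $\esp{\varphi_t(X(R_t))}\to\int\varphi\,\mathrm d\pi_\infty$ (first moment) and $\var{\espc{\varphi_t(X(R_t))}{\mathcal F_t}}\to0$ (vanishing variance).

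\medskip
\noindent\textbf{First moment.}
Integrating \eqref{eq:memory_kernel}--\eqref{eq:memory_kernel2} shows that $\bar\mu$ coincides, up to a multiplicative constant, with $\mathrm e^{\beta s}$ for $\mu_1$ with $\beta\neq0$ and with $\mathrm e^{s}$ in the other two cases; hence, under the law of $R_t$, the quantity $s(t)-s(R_t)$ converges in distribution to an exponential random variable, so that $R_t\to\infty$ and $\kappa_2 s(R_t)=\tau-O_{\mathbb P}(1)=\tau+o_{\mathbb P}(\sqrt\tau)$. I would then obtain $(X(R_t)-b_\tau)/a_\tau\xrightarrow{\sss d}\imf{f}{\Omega}\Gamma+\imf{g}{\Omega}\sim\pi_\infty$ by running the proof of Theorem~\ref{monkeyMarkovLimitTheorem} with $X(R_t)$ in place of $X(t)$: the representation $X(R_t)\deq\imf{Z}{S(R_t)}$ holds with a time-change $S(R_t)$ that differs from the one used there only through the last, incomplete, run, whose contribution remains $O_{\mathbb P}(1)$; thus $S(R_t)=\tau+\sqrt{\tau}\,\Omega+o_{\mathbb P}(\sqrt\tau)$ with $\Omega\sim\mc N(0,\nicefrac{\kappa_3}{\kappa_2})$, and Assumptions~(ii)--(iii) yield the claim, hence the first-moment convergence.

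\medskip
\noindent\textbf{Vanishing variance.}
Expanding the square, $\esp{\big(\espc{\varphi_t(X(R_t))}{\mathcal F_t}\big)^2}=\esp{\varphi_t(X(R_t))\,\varphi_t(X(R_t'))}$, where $R_t,R_t'$ are i.i.d.\ of law $\bar\mu(\cdot)/\bar\mu(t)$, independent of one another and of $\mathcal F_t$. Hence the vanishing of the variance is equivalent to the \emph{asymptotic independence} of $\big((X(R_t)-b_\tau)/a_\tau,\ (X(R_t')-b_\tau)/a_\tau\big)$, i.e.\ its joint convergence to a product of two independent copies of $\pi_\infty$. Here I would invoke the {\sc wrrt}: if $K$ and $K'$ are the run-intervals containing $R_t$ and $R_t'$, then, in the tree-indexed Markov chain of root-values, $X(R_t)$ and $X(R_t')$ are built along the ancestral lines of $K$ and $K'$, which coincide up to their most recent common ancestor and evolve conditionally independently afterwards. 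Both $K$ and $K'$ lie at tree-depth of order $s(t)$ (since $s(R_t),s(R_t')=s(t)-O_{\mathbb P}(1)$ and a typical {\sc wrrt} vertex has height of that order), whereas the depth of their common ancestor is $o_{\mathbb P}(s(t))$ --- in fact $O_{\mathbb P}(1)$ for the kernels considered here. Consequently the shared initial segment carries a negligible fraction of the effective time, and the single-vertex analysis of the previous step, applied to each of the two conditionally independent remaining segments, gives joint convergence to $\big(\imf{f}{\Omega_1}\Gamma_1+\imf{g}{\Omega_1},\ \imf{f}{\Omega_2}\Gamma_2+\imf{g}{\Omega_2}\big)$ with $(\Omega_1,\Gamma_1)$ and $(\Omega_2,\Gamma_2)$ independent and each distributed as in Theorem~\ref{monkeyMarkovLimitTheorem} --- exactly the required product.

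\medskip
\noindent\textbf{Main obstacle.}
The delicate part is the vanishing-variance step, which rests on two inputs. The first is the new {\sc wrrt} estimate: the most recent common ancestor of two $\mu$-weighted random vertices has depth $o_{\mathbb P}(s(t))$. For the increasing kernel $\mu_2$ the tree becomes more and more path-like as $\delta$ grows, so that two ``recent'' vertices share a long recent ancestral line; it is precisely the restriction $\delta<\nicefrac12$ that keeps this common ancestor shallow relative to $s(t)$ and lets the pairwise decoupling go through. The second is promoting ``shallow common ancestor'' to genuine asymptotic independence of the two rescaled endpoints, which means rerunning the estimates behind Theorem~\ref{monkeyMarkovLimitTheorem} --- the laws of large numbers and central limit theorems for sums of independent, non-identically distributed summands, whose applicability is what Assumption~(i) on $L$ secures --- jointly for the two coupled copies after deleting their common prefix.
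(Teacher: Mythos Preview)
Your plan is correct and follows essentially the same route as the paper's proof: reduce $\pi_t$ to the conditional law of a relocation point $V_t=X(R_t)$, apply the two-sample criterion (this is precisely the paper's citation of \cite[Lemma~3.1]{MR3629870}) to upgrade first-moment convergence to convergence in probability, and obtain the required joint convergence of two independently sampled points by decomposing their trajectories at the last common ancestor in the {\sc wrrt}, using that this ancestor stays at $O_{\mathbb P}(1)$ depth when $\delta<\nicefrac12$ (the paper's Lemma~\ref{lem:K} and Proposition~\ref{lem:LCA}). The only cosmetic difference is that the paper first strips off the incomplete last run on $[t-A(t),t]$ before sampling, whereas you sample directly on $[0,t]$ and absorb the straddling run into an $O_{\mathbb P}(1)$ error; both are fine.
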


{\cec 
\begin{remark}[Idea of the proof and discussion on the values of $\delta$ when $\mu=\mu_2$]
The proof of this theorem relies on the fact that $\pi_t$ is approximately equal to the distribution of the position of the monkey process at the last relocation before $t$. To prove that a random probability distribution converges in probability (for the weak topology), it is enough to prove that (1) a random variable sampled according to this probability distribution converges in distribution and (2) two random variables sampled independently according to this random distribution are asymptotically independent. When the memory is too steep ($\mu=\mu_2$ and $\delta\geq \nicefrac12$), our proof does not work, and (2) may not be true.
\end{remark}}

\subsection{A local limit theorem}
The main idea of the paper is to associate a labelled  tree to the monkey Markov process (its branching structure) to transport properties of the underlying Markov process~$Z$ to the process with random relocations~$X$. 
Heuristically, {\it if~$Z$ renormalized by some functions of $t$ satisfies an asymptotic property, then~$X$ satisfies the same asymptotic property once renormalized by the same functions but applied to $\kappa_2s(t)$ instead of $t$.}
In this section we illustrate this principle with the following local limit theorem, for which the state-space is taken to be~$\mathbb Z^d$. 

\begin{theorem}[Local limit theorem]\label{th:LLT}
We assume that the assumptions of Theorem~\ref{monkeyMarkovLimitTheorem} hold, that $Z^d$ takes values in~$\mathbb Z^d$ and $X(0)\in\mathbb Z^d$, and that, furthermore, $f\equiv1$. 
If $\gamma$ has a bounded and Lipschitz density function~$\phi$ and 
\[\sup_{m\in \mathbb Z^d}a_t\left|
\mathbb P(Z(t)=m) - \frac{1}{a_t}\phi\Big(\frac{m - b_t}{a_t}\Big)
\right| \to 0, \text{ when }t\to\infty,\]
then,
\[\sup_{m\in \mathbb Z^d}a_{\kappa_2s(t)}\left|
\mathbb P(X(t)=m) - \frac{1}{a_{\kappa_2s(t)}}\psi\Big(\frac{m - b_{\kappa_2s(t)}}{a_{\kappa_2s(t)}}\Big)
\right| \to 0,\]
when $t\to\infty$,
where $\psi$ is the density function of 
$\Gamma+g(\Omega)$, 
where $\Omega\sim\mc N(0,\nicefrac{\kappa_3}{\kappa_2})$ and $\Gamma\sim\gamma$ are independent.
\end{theorem}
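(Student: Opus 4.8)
The plan is to transfer the assumed local limit theorem for $Z$ through the random time-change built into the branching structure of $X$, exactly the way Theorem~\ref{monkeyMarkovLimitTheorem} transfers an ergodic limit theorem. Recall from the construction underlying that proof that, for each fixed $t$, $X(t)\stackrel{\sss d}{=}Z(S(t))$, where $S(t)$ is the sum of the partial run-lengths along the ancestral line of the current node of the \textsc{wrrt}; since $S(t)$ is a measurable function of the run-lengths $\bs L$ and of the relocation variables $(R_n)$ alone, it is independent of the trajectory of $Z$. Conditioning on $S(t)$ therefore gives, for $m\in\mathbb Z^d$,
\begin{equation}\label{eq:LLT_plan_repr}
\mathbb P\big(X(t)=m\big)=\mathbb E\big[q_{S(t)}(m)\big],\qquad q_s(m):=\mathbb P_{X(0)}\big(Z(s)=m\big).
\end{equation}
The estimates on $S(t)$ already needed for Theorem~\ref{monkeyMarkovLimitTheorem} (Lemmas~\ref{lem:exp_Fi}, \ref{lem:sums} and \ref{lem:sumsF_iW_i}, where $\mathbb E L^8<\infty$ enters) give $S(t)\to\infty$, the fluctuation statement $x_t:=(S(t)-\kappa_2 s(t))/\sqrt{\kappa_2 s(t)}\xrightarrow{\sss d}\Omega\sim\mc N(0,\kappa_3/\kappa_2)$, and moment bounds $\mathbb E|S(t)-\kappa_2 s(t)|^{q}=O(s(t)^{q/2})$ for $q\le4$.

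I would then insert the hypothesised local limit theorem for $Z$ into \eqref{eq:LLT_plan_repr}: write $q_s(m)=a_s^{-1}\phi\big(\tfrac{m-b_s}{a_s}\big)+a_s^{-1}\eta_s(m)$ for large $s$, with $\eta_s:=\sup_m|\eta_s(m)|\to0$ and $\eta_s$ bounded (indeed $a_sq_s(m)\le\|\phi\|_\infty+\eta_s$); multiplying \eqref{eq:LLT_plan_repr} by $a_{\kappa_2 s(t)}$ and setting $r_t:=a_{\kappa_2 s(t)}/a_{S(t)}$ gives
\[
a_{\kappa_2 s(t)}\,\mathbb P\big(X(t)=m\big)=\mathbb E\big[r_t\,\phi\big(\tfrac{m-b_{S(t)}}{a_{S(t)}}\big)\big]+(\text{error}),
\]
where $|\text{error}|\le\mathbb E[r_t\eta_{S(t)}]$ uniformly in $m$ (since $|\eta_s(m)|\le\eta_s$). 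The error vanishes by splitting according to whether $S(t)$ is below a fixed large constant, moderately below $\kappa_2 s(t)$, or comparable to $\kappa_2 s(t)$: in the first two regimes the probability is $O(s(t)^{-2})$ by the fourth-moment bound and dominates the (regular, power-law) growth of $a_{\kappa_2 s(t)}$, while in the last regime $r_t$ is bounded (by the regular-variation-type scaling of $a$ that accompanies $f\equiv1$) and $\sup_{s\ge\kappa_2 s(t)/2}\eta_s\to0$.

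For the main term, fix $m$ and set $u:=(m-b_{\kappa_2 s(t)})/a_{\kappa_2 s(t)}$ and $\tilde b_t:=(b_{S(t)}-b_{\kappa_2 s(t)})/a_{\kappa_2 s(t)}$, so that $(m-b_{S(t)})/a_{S(t)}=r_t(u-\tilde b_t)$ and the main term equals $\mathbb E[r_t\,\phi(r_t(u-\tilde b_t))]$, an expectation in $S(t)$ only. Exactly as in the proof of Theorem~\ref{monkeyMarkovLimitTheorem} (using $f\equiv1$, the definition of $g$ at the point $\kappa_2 s(t)$ with perturbation $x_t\sqrt{\kappa_2 s(t)}$, and $x_t\xrightarrow{\sss d}\Omega$), one has $(r_t,\tilde b_t)\xrightarrow{\sss d}(1,g(\Omega))$; since $\phi$ is bounded and continuous, for each fixed $u$ the integrand converges in distribution, hence in $L^1$, so
\[
\mathbb E\big[r_t\,\phi\big(r_t(u-\tilde b_t)\big)\big]\longrightarrow\mathbb E\big[\phi\big(u-g(\Omega)\big)\big]=\psi(u),
\]
the last equality because $\psi$ is by definition the density of $\Gamma+g(\Omega)$ with $\Gamma\sim\gamma$ (density $\phi$) independent of $\Omega$. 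The remaining point is uniformity over $m\in\mathbb Z^d$, i.e.\ over $u$ in the lattice $a_{\kappa_2 s(t)}^{-1}\mathbb Z^d-b_{\kappa_2 s(t)}/a_{\kappa_2 s(t)}$, whose mesh tends to $0$: on a window $\{|u|\le R\}$ this follows from the equi-Lipschitz property of $u\mapsto r_t\phi(r_t(u-\tilde b_t))$ on the high-probability event $\{r_t\le C\}$ (since $\phi$ is Lipschitz), the uniform continuity of $\psi$, and tightness of $(r_t,\tilde b_t)$; for $|u|>R$ one uses that a Lipschitz integrable density on $\re^d$ vanishes at infinity, so $\sup_{|y|\ge R'}\phi(y)\to0$, which bounds both $\psi(u)$ and the integrand uniformly small once the non-concentration events for $S(t)$ are discarded via the moment bounds. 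Combining the two displays then yields the claim.

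I expect the genuinely delicate point to be exactly this uniformity over all of $\mathbb Z^d$ — both the tail regime $|u|$ large and, above all, verifying that after multiplication by the diverging normalisation $a_{\kappa_2 s(t)}$ the error term still goes to $0$ uniformly; both rest on the high-order moment estimates for $S(t)$ coming from $\mathbb E L^8<\infty$ and on the regular variation of $a$, which is precisely why $f\equiv1$ is imposed. Everything else is the same time-change transfer, via \eqref{eq:LLT_plan_repr}, that powers Theorem~\ref{monkeyMarkovLimitTheorem}.
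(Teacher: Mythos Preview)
Your overall plan is the paper's plan: use $X(t)\stackrel{\sss d}{=}Z(S(t))$ with $S$ independent of $Z$, insert the LLT for $Z$ to split $a_{\kappa_2s(t)}\mathbb P(X(t)=m)$ into a $\phi$-main term and a remainder, show the remainder vanishes uniformly in $m$, and show the main term converges to $\psi(u)$ uniformly in $u=(m-b_{\kappa_2s(t)})/a_{\kappa_2s(t)}$. The differences are in how each piece is handled, and one of your choices introduces an assumption the theorem does not make.

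\textbf{Remainder term.} The paper does not split into regimes of $S(t)$ and does not use moment bounds on $S(t)$. Instead it passes (via Skorokhod) to a coupling where $\tilde S(t)=\kappa_2s(t)+\Lambda\sqrt{\kappa_3s(t)}+o(\sqrt{s(t)})$ almost surely; then $r_t:=a_{\kappa_2s(t)}/a_{\tilde S(t)}\to1$ a.s.\ directly from the \textbf{(Scaling)} hypothesis with $f\equiv1$, and $\Delta(t):=\sup_m a_{\tilde S(t)}|q_{\tilde S(t)}(m)-a_{\tilde S(t)}^{-1}\phi(\cdot)|\to0$ a.s.\ because $\tilde S(t)\to\infty$. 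The remainder is bounded by $r_t\Delta(t)+|r_t-1|\,\|\phi\|_\infty$, which goes to $0$. Your regime-splitting argument instead needs $a_{\kappa_2s(t)}\cdot\mathbb P\big(S(t)\le\tfrac12\kappa_2s(t)\big)\to0$, and for this you invoke ``regular, power-law growth of $a$''. That is \emph{not} implied by $f\equiv1$: for instance $a_t=e^{t^{1/4}}$ satisfies $a_{t+x\sqrt t}/a_t\to1$ yet grows super-polynomially, and then $a_{\kappa_2s(t)}$ defeats any polynomial-in-$s(t)$ probability bound. The fourth-moment estimate $\mathbb E|S(t)-\kappa_2s(t)|^4=O(s(t)^2)$ you cite is also not proved in the paper (only the CLT is). So this step, as you wrote it, has a genuine gap; the paper's Skorokhod route sidesteps both issues by never needing to control $r_t$ away from the typical regime.

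\textbf{Main term and uniformity.} Here the paper is shorter than your proposal. It identifies $\psi(x)=\mathbb E[\phi(x-g(\Omega))]$ and, still on the Skorokhod space, writes $\mathbb E\phi\big((m-b_{S(t)})/a_{S(t)}\big)=\mathbb E\psi\big(u+\varepsilon'_t\big)$ with $\varepsilon'_t\to0$ a.s.; the uniform bound over $m$ is then the single line
\[
\big|\mathbb E\psi(u+\varepsilon'_t)-\psi(u)\big|\ \le\ \mathbb E\big[\,\vartheta|\varepsilon'_t|\wedge 2\|\psi\|_\infty\,\big]\ \longrightarrow\ 0
\]
by dominated convergence, using only that $\psi$ (like $\phi$) is bounded and Lipschitz. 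Your pointwise-then-uniform scheme (equi-Lipschitz on $\{|u|\le R\}$ plus decay of $\phi$ at infinity for $|u|>R$) would also work, but it is more elaborate than needed once one writes the main term through $\psi$ rather than through $\phi$ and $(r_t,\tilde b_t)$ separately.
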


Note that the function $\psi$ can be taken equal to $\mathbb E[\imf{\phi}{x-g(\Omega)}]$ and that it is also Lipschitz and bounded. 

\subsection{Recurrence}
When considering the simple random walk or the Brownian motion on $\mathbb Z^d$ (resp.\ $\mathbb R^d$), it is natural to ask whether or not the walker started from zero will come back to zero (resp.\ a neighbourhood of zero) almost surely in finite time; we then call the process ``recurrent'' (resp.\ ``neighborhood recurrent''). Boyer and Solis-Salas ask this question for the monkey walk; since it is slowly diffusive, one can imagine that the monkey walk is recurrent in all dimensions. 
We are able to prove that the answer depends on the choice of the memory kernel:
\begin{theorem}\label{th:rec}
Assume that $X(0)=0$ and $X=\mc M(P, \fr, \mu)$, where $\fr$ is such that
$\mathbb E L^8<\infty$ 
if $L$ is a random variable of distribution~$\fr$.
\begin{enumerate}[(a)]
\item If $P$ is the semi-group of
the lazy\footnote{The lazy simple symmetric random walk is defined for an arbitrary $p\in(0,1)$: at each time step, the walker does not move with probability $p$, and moves according to a simple symmetric random walk otherwise. It is a standard way to avoid parity conditions when calculating the probability that the random walker is at the origin at time $n$.} simple symmetric random walk on $\mathbb Z^d$, 
and if we let 
$\mathcal{Z}=\{t\geq 0: X(t)=0\}$, 
then, 
\begin{enumerate}[(i)]
\item 
the cardinal of $\mathcal Z$ is almost surely infinite if $\mu=\mu_1$ or $\mu=\mu_2$ and $\delta d\leq 2$;
\item $\|X(t)\|\to\infty$ almost surely as $t\to\infty$ (in particular, $\mathcal Z$ is almost surely finite) if $\mu=\mu_2$ and $\delta d>2$.
\end{enumerate}
\item If $P$ is the semi-group of the standard Brownian 
motion on $\mathbb R^d$, we let 
$\mathcal{Z}=\set{t\geq 0: \|X(t)\|\leq \eta}$; 
then, for all $\eta>0$, 
$\mathcal{Z}$ is unbounded (and thus its cardinal is infinite) almost surely if $\mu=\mu_1$ or $\mu=\mu_2$ and $\delta d\leq 2$. 
\end{enumerate}
\end{theorem}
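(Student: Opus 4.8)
The plan is to exploit the branching-structure machinery developed for Theorems \ref{monkeyMarkovLimitTheorem}, \ref{th:cv_occupation} and \ref{th:LLT}, namely the identity in distribution $X(t)\stackrel{d}{=}Z(S(t))$ where $S(t)$ is a random time-change with $S(t)\approx\kappa_2 s(t)$, together with the local limit theorem (Theorem \ref{th:LLT}) that controls $\mathbb P(X(t)=0)$ (resp.\ $\mathbb P(\|X(t)\|\leq\eta)$) precisely. The key heuristic is: the expected number of returns to $0$ up to time $N$ behaves like $\sum_{t\leq N}\mathbb P(X(t)=0)\asymp\sum_{t\leq N} a_{\kappa_2 s(t)}^{-d}$, and since for the lazy walk $a_u\asymp\sqrt{u}$ (and for Brownian motion $a_u=\sqrt u$), this is comparable to $\sum_{t\leq N} (s(t))^{-d/2}$. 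One must then decide, via Borel--Cantelli-type arguments, whether this series diverges (giving $|\mathcal Z|=\infty$) or converges (giving transience). Plugging in $s(t)$ from \eqref{eq:def_s}: for $\mu_1$ we get $s(t)=\log(t)^\alpha$ or $\alpha\log\log t$, so $(s(t))^{-d/2}$ summed over $t$ always diverges (logarithmic and doubly-logarithmic decay are far from summable); for $\mu_2$ we get $s(t)=\gamma t^\delta$, so $\sum_t t^{-\delta d/2}$ diverges iff $\delta d\leq 2$ — this is exactly the dichotomy in the statement.

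For part (a)(i) and part (b), I would make the divergence of $\sum_t\mathbb P(X(t)=0)$ rigorous using Theorem \ref{th:LLT} (for the lattice case, with $\phi$ the Gaussian density, which is bounded and Lipschitz, so the LLT hypothesis holds for the lazy simple symmetric random walk by the classical local CLT) to get $\mathbb P(X(t)=0)\sim c\,(\kappa_2 s(t))^{-d/2}$; summability then fails. Divergence of the first-moment sum is not by itself enough, so I would run a second-moment / conditional Borel--Cantelli argument: estimate $\mathbb P(X(t)=0,\,X(t')=0)$ for $t<t'$ using the branching structure. Here the crucial point is that given the tree and the run-lengths, $X(t)$ and $X(t')$ are (approximately) $Z$ evaluated along a path whose early portion is shared; conditionally on the ancestor at the branch point being at position $v$, the increments are independent, so one gets a bound of the shape $\mathbb P(X(t)=0,X(t')=0)\lesssim \mathbb P(X(t)=0)\,\sup_v\mathbb P_v(Z(\text{remaining time})=0)\lesssim \mathbb P(X(t)=0)\,\mathbb P(X(t')=0)$ up to constants, provided the remaining time-change is still of order $s(t')$. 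This Kochen--Stone / Paley--Zygmund input then yields $\mathbb P(X(t)=0\text{ i.o.})>0$, and a $0$–$1$ law (or a direct argument using that after any time the process restarts a comparable monkey walk from its current position, combined with the fact that relocations send it back to previously visited — hence recurrent — sites) upgrades this to probability one. The Brownian case (b) is handled identically with $\{\|X(t)\|\leq\eta\}$ in place of $\{X(t)=0\}$, using the continuous local CLT; laziness is not needed there.

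For part (a)(ii), the transient regime $\mu=\mu_2$, $\delta d>2$, I would instead show $\|X(t)\|\to\infty$ directly. The natural route is: by Theorem \ref{monkeyMarkovLimitTheorem} applied to the simple symmetric random walk ($\mu=0,\sigma^2=1/d$ per coordinate), $\|X(t)\|/\sqrt{s(t)}$ converges in distribution to a nondegenerate limit, so $X(t)$ typically has size $\asymp\sqrt{s(t)}=\sqrt{\gamma}\,t^{\delta/2}$, which grows; but convergence in distribution alone does not give the almost-sure statement. To get the a.s.\ escape I would use the first Borel--Cantelli lemma: choosing a sufficiently sparse deterministic subsequence $t_k$ (e.g.\ $t_k=k^{C}$) and a slowly growing threshold $r_k=o(\sqrt{s(t_k)})$, show $\sum_k\mathbb P(\|X(t_k)\|\leq r_k)<\infty$ via the LLT bound $\mathbb P(\|X(t_k)\|\leq r_k)\lesssim (r_k/\sqrt{s(t_k)})^d \asymp$ something summable when $\delta d>2$ and $r_k$ is tuned; hence $\|X(t_k)\|>r_k$ eventually. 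Then control the oscillation of $\|X\|$ between consecutive $t_k$'s: between relocations the walk moves like a simple random walk over a run of length $L_i$ with $\mathbb E L^8<\infty$, and the number of relocations in $[t_k,t_{k+1}]$ concentrates, so with a maximal inequality and Borel--Cantelli the fluctuations over $[t_k,t_{k+1}]$ are $o(\sqrt{s(t_k)})$ a.s. Since relocation targets are previous positions of $X$, which are themselves growing, relocations cannot pull $X$ back to a bounded region infinitely often. Combining, $\|X(t)\|\to\infty$ a.s., which in particular makes $\mathcal Z$ finite.

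I expect the main obstacle to be the second-moment estimate in the recurrent cases: making precise, through the weighted-random-recursive-tree representation, that two far-apart times $t\ll t'$ give positions that decorrelate enough — i.e.\ quantifying the "shared ancestral path then independent increments" picture and verifying that the residual time-change seen from the branch point is still of the right order $s(t')$ so the conditional local CLT applies uniformly in the branch position. The bookkeeping of which vertex of the tree $X(t)$ corresponds to, and uniform control (over the randomness of $\bs L$ and the tree) of the conditional return probabilities, is where the real work lies; the series computations and the Borel--Cantelli steps are then routine given \eqref{eq:def_s} and Theorems \ref{monkeyMarkovLimitTheorem}–\ref{th:LLT}.
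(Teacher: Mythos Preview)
Your identification of the decisive series $\sum_t s(t)^{-d/2}$ and of the resulting dichotomy is exactly right and matches the paper. But the machinery you propose to convert this into the almost-sure statements is both heavier than needed and has real gaps.

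The key idea you are missing is \emph{L\'evy's conditional Borel--Cantelli lemma}, which lets the paper avoid second moments, Kochen--Stone, and 0--1 laws entirely. Conditioning on the filtration $\mathcal F^S_t=\sigma(S_u:u\leq t)$ of the random time-change (equivalently on $\bs L$, the tree, and the $F_i$'s), and using that $Z$ is independent of $S$, one has
\[
\mathbb P\big(X(t)=0\,\big|\,\mathcal F^S_t\big)=\mathbb P\big(Z(s)=0\big)\big|_{s=S(t)}\sim cS(t)^{-d/2}\sim c\big(\hat\kappa_2 s(t)\big)^{-d/2}
\]
almost surely, by the classical local CLT for the lazy walk together with the law of large numbers $S(t)/s(t)\to\hat\kappa_2$ coming from Theorem~\ref{th:sums_along_branches}. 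L\'evy's lemma then decides \emph{both} directions at once: $X(t)=0$ infinitely often a.s.\ iff the random series $\sum_t\mathbb P(X(t)=0\mid\mathcal F^S_t)$ diverges, which is settled deterministically by whether $\sum_t s(t)^{-d/2}$ diverges. For (a)(ii) the same argument applied to $\{\|X(t)\|\leq\eta\}$ (using the uniform bound $\mathbb P(Z(t)=x)\leq c t^{-d/2}$) shows this set is a.s.\ finite for each $\eta$, whence $\|X(t)\|\to\infty$ with no subsequence or oscillation control. Part~(b) is identical along integer times via $\mathbb P(\|Z(t)\|\leq\eta)\asymp t^{-d/2}$. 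Note that Theorem~\ref{th:LLT} is not used; only the classical local limit theorem for $Z$ is needed.

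The obstacles you flag as the main difficulty are genuine for your route and do not obviously close. The two-point bound $\mathbb P(X(t)=0,X(t')=0)\lesssim\mathbb P(X(t)=0)\,\mathbb P(X(t')=0)$ does not follow from the branching picture as you sketch it: for fixed $t<t'$ the relevant tree nodes are specific (not independently weight-sampled) vertices, and the branch to time $t'$ can very well pass through the run containing time $t$, so the ``shared-ancestor-then-independent'' factorisation is not automatic. Even granting Kochen--Stone, there is no evident 0--1 law for this non-Markovian process to upgrade positive probability to probability one. In (a)(ii), your step ``relocations cannot pull $X$ back to a bounded region because previous positions are themselves growing'' is circular---that is exactly the conclusion you are trying to establish. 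All of this evaporates once you condition on the time-change and invoke the conditional Borel--Cantelli lemma.
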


We believe that the ``monkey Brownian motion'' (case~$(b)$) is not neighborhood recurrent, and even transient, when $\mu=\mu_2$ and $\delta d>2$. The proof seems more involved than for the monkey random walk (case~$(a)$); and we leave this question as an open problem.

In the case when the underlying process $Z$ is the simple symmetric random walk on $\mathbb Z^d$, and, in particular, when $d\geq 2$, one could also ask how many sites have been visited at time~$t$ (asymptotically when $t\to\infty$), and what is the shape of this visited set; we also leave this question open.

\subsection{Plan of the paper}
Section~\ref{sec:wrrt} contains the main idea of the paper:  the definition of the {\it branching structure} of the monkey Markov process. This branching structure is a random tree whose nodes are labelled by the successive runs of the monkey Markov process (the trajectories between relocations). We show that the underlying random tree (once labels have been removed) is a weighted version of the random recursive tree, which we call the weighted random recursive tree, or {\sc wrrt}.
As a by-product of our proofs, we state the convergence 
in probability of the profile of this random tree (see Theorem~\ref{th:profile}).

Section~\ref{sec:CLT+} contains the proof of Theorem~\ref{monkeyMarkovLimitTheorem} (central limit theorem), Theorem~\ref{th:LLT} (local limit theorem) and 
Theorem~\ref{th:rec} (recurrence); 
these three theorems are grouped into one section since their proofs rely on the same principles. 
The proof of Theorem \ref{th:cv_occupation} (convergence of the weighted occupation measure) is more involved and is presented in Section~\ref{sec:occupation_measure}. Finally, Section~\ref{sec:proof_prop} is devoted to the technical analysis of the {\sc wrrt}.


\section{The weighted random recursive tree}\label{sec:wrrt}
\subsection{The Monkey walk branching structure}

The key idea of this paper is to note that each monkey Markov process 
$X = \mrw{P, \fr, \mu}$ can be coupled with a labelled random tree called 
its {\bf branching structure} (see Figure~\ref{fig:ex_branchingstructure}). 
The shape of this tree is a weighted version of the random recursive tree, 
and the labels are the runs of the monkey Markov process, i.e.\ the trajectories $(X_t)_{t\in[T_n, T_{n+1})}$ for all $n\geq 0$.

The {branching structure} $(\mathcal \tau_n, \ell_n)_{n\geq 1}$ of the monkey Markov process $X=\mrw{P,\fr,\mu}$ is built recursively as follows (see Figure~\ref{fig:ex_branchingstructure}):
We use the random variables $(L_i)_{i\geq 1}$ and $(R_i)_{i\geq 1}$ that were used when defining $X$. 

Recall that $T_0 = 0$ and $T_n = L_1 + \cdots + L_n$ for all $n\geq 1$.
\begin{itemize}
\item The tree $\tau_1$ is the one-node tree whose unique node $\nu_1$ is labelled by $\ell_1(\nu_1) = (X_t)_{t\in[0, T_1)}$.
\item For all integers $n\geq 1$, the tree $\tau_{n+1}$ is obtained from the tree $\tau_n$ by attaching a new node called $\nu_{n+1}$ in the following way:
\begin{itemize}
\item let $\xi(n+1)\in\{1, \ldots, n\}$ be the unique index such that $T_{\xi(n+1)-1} \leq R_{n+1} < T_{\xi(n+1)}$, and add $\nu_{n+1}$ as a new child of $\nu_{\xi(n+1)}$;
\item set $\ell_{n+1}(\nu) = \ell_n(\nu)$ for all $\nu$ already in $\tau_n$ and label $\nu_{n+1}$ by
\begin{esn}\ell_{n+1}(\nu_{n+1}) = (X_t)_{t\in[T_n, T_{n+1})}.\end{esn}
\end{itemize}
\end{itemize}

\begin{figure}
$\vcenter{\hbox{\includegraphics[width=8cm]{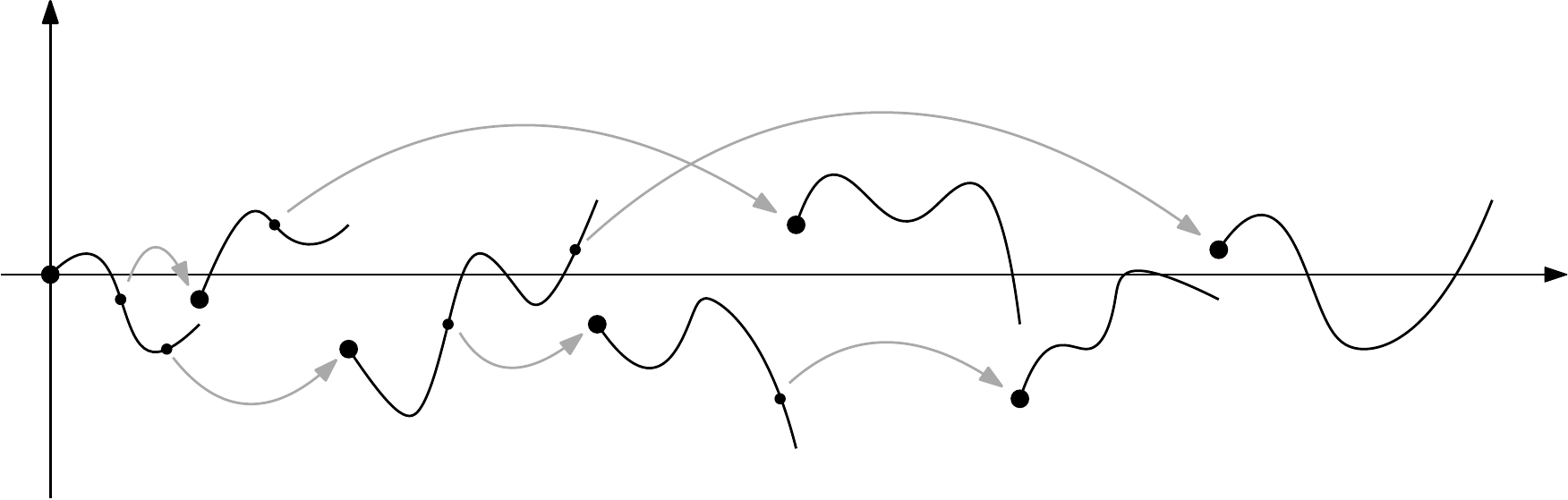}}}$
\hspace{1cm}$\vcenter{\hbox{\includegraphics[width=5cm,page=1]{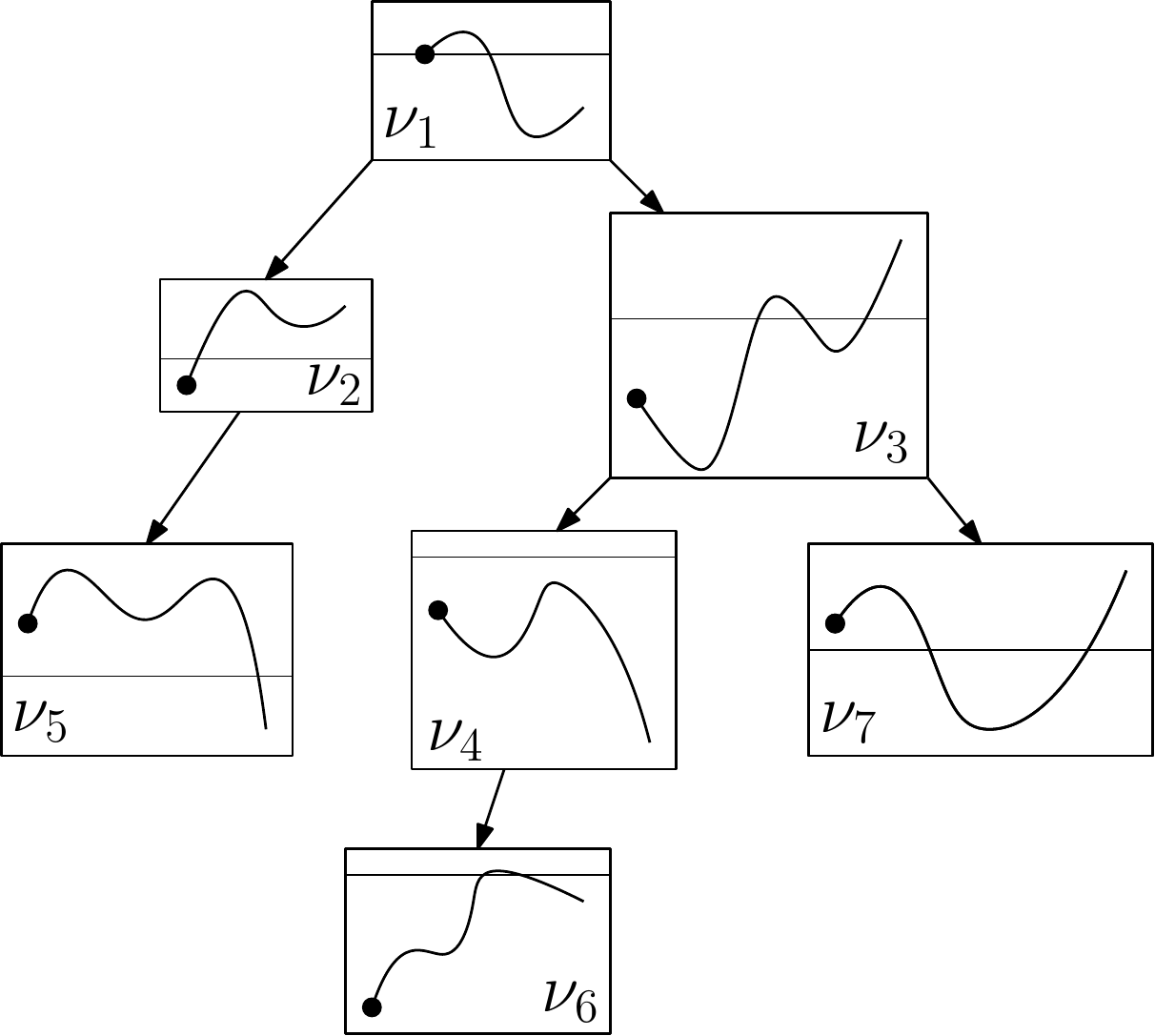}}}$
\caption{A realization of the monkey Markov process on $\mathbb R$ (left), and its associated branching structure (right). The grey arrows on the left are a graphic representation of the relocations; recall that at every relocation point $T_i$, the walker chooses a random time 
$R_{i}$ in its past according to the memory kernel, 
and jumps back to where it was at that time. 
Each grey arrow points from $R_{i}$ to $T_i$ for some~$i$. 
On the right, we show the tree $\mathcal T_7$: 
we have $(\nu_1, \ldots, \nu_7) = (\emptyset,1,2,21,11,211,22)$.
}
\label{fig:ex_branchingstructure}
\end{figure}

\begin{definition}
Given a sequence of weights $\bs w= (w_i)_{i\geq 1}$, 
we define the $\bs w$-weighted random recursive tree ($\bs w$-{\sc wrrt}) $\mathcal T=(\mathcal T_n)_{n\geq 1}$ as follows:
\begin{itemize}
\item $\mathcal T_1$ is the one node tree;
\item $\mathcal T_{n+1}$ is the tree $\mathcal T_n$ to which a new node $\nu_{n+1}$ has been attached at random as follows: choose an integer $\xi(n+1)$ at random in $\{1, \ldots, n\}$ with probability
\[\mathbb P(\xi(n+1) = i) = \frac{w_i}{w_1+\cdots+w_n},\]
and add $\nu_{n+1}$ to $\mathcal T_n$ as a new child of $\nu_{\xi(n+1)}$. 
\end{itemize}
\end{definition}
In the above definition, each of the trees $\mathcal T_n$ is \emph{increasing} (cf. \cite{MR1251994}) in the sense that for all integers $1\leq i, j\leq n$, if node $\nu_j$ is a child of node $\nu_i$, then $i<j$. 
We can compute the law of a $\bs w$-{\sc wrrt} tree $\mathcal T_n$ as follows: 
if $t$ is an increasing tree with $n$ vertices and $\pi_t(i)$ is the index of the parent of~$\nu_i$ in the tree~$t$, then:
\begin{equation}
\label{eq:wrrtLaw}
\proba{\mathcal T_n=t}=\prod_{i=2}^n\frac{w_{\pi_t(i)}}{w_1+\cdots+w_{i-1}}. 
\end{equation}

Note that, when $w_i = w_1\in (0,\infty)$ for all $i\geq 1$, then the $\bs w$-{\sc wrrt} is the so-called random recursive tree ({\sc rrt}). The {\sc wrrt} can thus be seen as a generalization of the {\sc rrt}. 

\begin{remark}[The {\sc wrrt} in the literature]
{\cec The {\sc wrrt} was already introduced by Borovkov and Vatutin~\cite{BV05,BV06}. They assume that the weight of node is given $w_i = \prod_{j=1}^i c_j$, where the sequence $(c_j)_{j\geq 1}$ is a sequence of i.i.d.\ random variables, and then show a central limit theorem for the height of a node taken at random in the tree with probability proportional to the weights. Hiesmayr and I\c slak~\cite{HI17} also introduced the {\sc wrrt} (they call it the {\sc wrt}); they prove asymptotic results for the number of leaves and the height of the random tree.}
We prove in this paper convergence in probability of the profile of this new random tree (see Theorem~\ref{th:profile}) when the $(w_i)_{i\geq 1}$ are randomly chosen independently and identically at random and have finite second moment.
{\cec Other questions about this random tree, such as determining its degree distribution or stronger convergence theorems for the profile, remain open.}
\end{remark}

{\cec In} our random walk setting, we let, for all $i\geq 1$,
\begin{equation}\label{eq:def_weights}
W_i = \int_{T_{i-1}}^{T_i} \mu,
\end{equation}
and $\bs W  = (W_i)_{i\geq 1}$. Note that, for all integers $i\geq 1$, $(W_1, \ldots, W_i)$ is $(L_1, \ldots, L_i)$-measurable.
Conditionally on $\bs L$, and thus $\bs W$, 
the (unlabelled) tree $(\tau_n)_{n\geq 1}$ is distributed as the $\bs W$-weighted random recursive tree; in other words, for all $n$-node increasing tree $t$,
\begin{linenomath}
\begin{equation*}
\mathbb P_{\bs L}(\tau_n=t)=\prod_{m=2}^n\frac{W_{\pi_t(m)}}{W_1 + \cdots +W_{m-1}}, 
\end{equation*}
where $\mathbb P_{\bs L}$ denotes the probability conditionally on the sequence~$\bs L$.
\end{linenomath}

\medskip
Since for all $n$, $\tau_{n}$ is a subtree of $\tau_{n+1}$, and $\ell_{n+1}$ is equal to $\ell_n$ on $\tau_n$, we can therefore define $(\tau,\ell)$ as the union of $\{(\tau_n,\ell_n)\}_{n\geq 1}$. Note that, conditionally on $\bs L$,  
the sequence of labels $\ell = \{\ell(\nu_n)\}_{n\geq 1}$ indexed by the nodes of $\tau$ is a branching Markov chain, meaning that 
\begin{itemize}
\item the sequence of labels along any branch from the root to infinity is distributed as a Markov chain of a given kernel $K$,
\item the sequences of labels along two distinct branches are independent after their last common ancestor.
\end{itemize}
Note that the kernel $K$ of the branching Markov chain $(\tau,\ell)$ 
is defined as follows:
for all $0\leq s\leq t$, for all $x : [s,t)\to \mathbb R^d$,
draw a random variable R according to the following distribution
\[\mathbb P(R\leq u) = \frac{\int_s^u \mu}{\int_s^t \mu},\]
start a Markov process of semi-group $P$ at position $x(R)$ and run it from time 0 to time $[0,W)$ (where $W$ is a random variable of distribution $\fr$, independent from $R$); the law of this process is our definition of $K(x,\cdot)$.

In this construction, we have used the following fact: 
When relocating at time $T_n$, 
the walker chooses a time $R_n$
randomly in its past according to the memory kernel; 
this is equivalent to first choosing a run in its past, with probability proportional to the $W_i$'s, and then choosing a time at random inside this random run with probability given by
\[\mathbb P_{\bs L}(R_n\leq x \mid R_n\in [T_{i-1},T_i))
=\frac{\int_{T_{i-1}}^x \mu}{\int_{T_{i-1}}^{T_i} \mu}.\]

\subsection{General background on trees and notations}
So far, we have described trees as cycle-free graphs on a set of nodes $\{\nu_1, \nu_2, \ldots\}$, where $\nu_1$ is seen as the root. We call {\it parent} of a node $u$ the first node in the path from $u$ to the root, the {\it ancestors} of $u$ are all the nodes on the path from~$u$ to the root. The {\it children} of a node $u$ are all the nodes whose parent is~$u$, a \textit{leaf} of $\tau$ is a node with no children, while the \textit{internal nodes} are those with at least one child. 
The \textit{height} of a node $u$, denoted by $|u|$, is the graph distance from $u$ to the root. Finally, the \textit{last common ancestor}~$u\wedge v$ of two nodes~$u$ and~$v$ is the highest node (i.e.\ with the largest height) that is ancestor to both $u$ and $v$.

In the following, it is convenient to embed trees in the plane by ordering the children of all nodes. We decide that the children of a node are ordered from left to right in increasing order of their indexes. 
We can then associate to each node a word on the alphabet $\mathcal A = \{1, 2, \ldots\}$ as follows: the root is associated to the empty word $\varnothing$, and each node $u$ is given the word of its parent to which a last ``letter'' is added; this last letter is the rank of $u$ among its siblings (from left to right). For example, node $13$ is the third child of the first child of the root.
Our trees can thus be seen as sets of words on $\mathcal A = \{1, 2, \ldots\}$;
we denote by $\mathcal A^*$ the set of all (finite) words on $\mathcal A$.
In the following, we identify a node with its word, and allow ourselves to say, e.g.\ ``the prefix of node $\nu_i$'' instead of the more accurate ``the prefix of the word associated to node $\nu_i$''.
Note that the ancestors of a node are all its prefixes, the height of a node is its length (in terms of number of letters), and the last common ancestor of~$u$ and~$v$ is their longest common prefix.

\subsection{Key property of the {\sc wrrt}}
As already discussed, the key in studying the monkey Markov process
is to note that, for all $t\in\mc T$, $X(t)$ has the same distribution as $Z(S(t))$, where $S(t)$ is a random variable taking values in $\mc T$ that depends
on the sequence of run-lengths $\bs L = (L_i)_{i\geq 1}$ and on the memory kernel, but is independent of the Markov process $Z$. 
All our results follow from this equality in distribution, and from a central limit theorem verified by $S(t)$. 

{\cec For all fixed $t$, $S(t)$ is defined by looking at the trajectory of the walker backwards in time, starting from $t$ (see Figure~\ref{onlyUsefulPartOfTheTrajectoryPicture}); whenever we see a relocation point, say at time $T_i$, we jump directly to time $R_i$ (note that $X(R_i) = X(T_i)$ by definition). By doing so, we follow a continuous path; $S(t)$ is equal, by definition, to the length of this path. The exact definition of $S(t)$ is given in Section~\ref{sub:CLT}.}

The following theorem is key in proving a central limit theorem for $S(t)$; indeed, we show later that $S(t)$ is equal in distribution to the quantity $\Phi(N(t))$ (defined in the statement of the theorem) for some random $N(t)$ that behaves almost surely as $t/\mathbb EL$ when $t\to\infty$.

\medskip
Let $F^{\sss (n)}_i$ be a random variable distributed as $R_n-T_{i-1}$ conditionally on $R_n\in [T_{i-1}, T_i)$ for all $1\leq i\leq n$; 
note that, conditionally on $\bs L$, for all $x\in[T_{i-1},T_i)$, we have,
\[\mathbb P_{\bs L}(F^{\sss (n)}_i\leq x)
=\mathbb P_{\bs L}(R_n\leq T_{i-1}+x\mid R_n\in[T_{i-1},T_i)) 
= \frac{\int_{T_{i-1}}^{T_{i-1}+x} \mu}{\int_{T_{i-1}}^{T_i}\mu},\]
implying that the law of $F_i^{\sss (n)}$ does not depend on $n$. Therefore, for all $i\geq 1$, we denote by $(F_i)_{i\geq 1}$ a sequence of independent random variables such that
\begin{equation}\label{eq:distr_F}
\mathbb P_{\bs L}(F_i\leq x) = \frac{\int_{T_{i-1}}^{T_{i-1}+x} \mu}{\int_{T_{i-1}}^{T_i}\mu},
\end{equation}
for all $i\geq 1$.

Let $\mathcal T_n$ be the $n$-node $\bs W$-{\sc wrrt}, 
and $\bs F=(F_i)_{i\geq 1}$ a sequence of independent random variables whose distribution is given by Equation~\eqref{eq:distr_F}.
Define, for all integers $n$ and for all nodes $\nu\in\mathcal T_n$,
\[\Phi(\nu) = \sum_{i=1}^n F_i \bs 1_{\nu_i\preccurlyeq \nu},\]
where $u\preccurlyeq \nu$ if, and only if, $u$ is a (non-strict) ancestor (or prefix) of $\nu$. In the following theorem, we show a joint limit theorem for $\Phi(u_n)$ and $\Phi(v_n)$, where $u_n$ and $v_n$ are two nodes taken independently and weight-proportionally at random in the $n$-node $\bs W$-{\sc wrrt}.

\begin{theorem}\label{th:sums_along_branches}
Assume that $\mathbb E L^8<\infty$ if~$L$ is a random variable of distribution $\varphi_{run}$, 
and denote by 
\[\hat \kappa_i = 
\frac{\mathbb E L^i}{i\mathbb E L} \text{ if }\mu=\mu_1,
\quad\text{ and }\quad
\hat\kappa_i = \frac{\mathbb E L^i}{i(\mathbb E L)^{1-\delta}} \text{ if }\mu=\mu_2,\]
for $i\in\{2,3\}$. Finally, if $\mu=\mu_2$, assume that $\delta\in(0,\nicefrac12)$.

Let $u_n$ and $v_n$ two nodes taken independently and weight-proportionally at random in $\mathcal T_n$.
\begin{itemize}
\item If $\mu=\mu_1$ or $\mu=\mu_2$ and $\delta\in(0,\nicefrac12)$, then, conditionally on $\paren{\bs L,\bs F}$, $\paren{\bs L,\bs F}$-almost surely, we have 
\[
\left(\frac{\Phi(u_n) - \hat\kappa_2s(n)}{\sqrt{\hat\kappa_3s(n)}},
\frac{\Phi(v_n) - \hat\kappa_2s(n)}{\sqrt{\hat\kappa_3s(n)}}
\right)
\xrightarrow{\sss d} (\Lambda_1,\Lambda_2),\]
as $n\to\infty$, 
where $\Lambda_1$ and $\Lambda_2$ are two independent standard Gaussian random variables. 
\item If $\mu=\mu_2$ and $\delta = \nicefrac12$, then, conditionally on $\paren{\bs L,\bs F}$, $\paren{\bs L,\bs F}$-almost surely, we have
\[\frac{\Phi(u_n) - \hat\kappa_2s(n)}{\sqrt{\hat\kappa_3s(n)}}
\xrightarrow{\sss d} \mathcal N(0,1),\]
as $n\to+\infty$.
\end{itemize}
\end{theorem}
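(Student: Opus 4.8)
The plan is to reduce the statement to limit theorems for sums of independent (non-identically distributed) random variables, as announced in the Remark on Assumption~(i). The first step is to understand the law of a single $\Phi(u_n)$, where $u_n$ is a weight-proportional random node in $\mathcal T_n$. Conditionally on $(\bs L,\bs F)$, the random recursive structure of the $\bs W$-{\sc wrrt} tells us that the ancestral line of $u_n$ is obtained by a Markovian ``backward'' procedure: the last common ancestor considerations aside, $u_n$ is a descendant of $\nu_i$ with probability governed by the weights $W_i$, and conditioning on $\nu_i \preccurlyeq u_n$ one iterates among $\nu_1,\ldots,\nu_{i-1}$. Hence $\Phi(u_n) = \sum_{j} F_{i_j}$ where $i_1 > i_2 > \cdots$ is the decreasing sequence of ancestor-indices of $u_n$, and this is exactly the kind of ``sum along a random branch'' that, for the classical {\sc rrt} ($W_i\equiv 1$), reduces to a sum of independent indicators times the $F_i$'s. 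For the general $\bs W$-{\sc wrrt}, the key observation (this is where I would invoke the forthcoming Lemmas~\ref{lem:exp_Fi}, \ref{lem:sums}, \ref{lem:sumsF_iW_i}) is that, conditionally on $(\bs L,\bs F)$,
\[
\Phi(u_n) \stackrel{d}{=} \sum_{i=1}^{n} F_i B_i,
\]
where the $B_i$ are independent Bernoulli variables with $\mathbb P_{\bs L}(B_i=1) = W_i/(W_1+\cdots+W_i)$ — this is the standard ``record''-type decomposition of the depth of a node in a (weighted) recursive tree. I would first establish this decomposition carefully (it follows from \eqref{eq:wrrtLaw} by a direct computation on the ancestral chain), and compute, using the explicit forms \eqref{eq:memory_kernel}--\eqref{eq:memory_kernel2} of $\mu_1,\mu_2$ together with $W_i = \int_{T_{i-1}}^{T_i}\mu$ and the strong law $T_n/n \to \mathbb E L$, the asymptotics
\[
\sum_{i=1}^n \mathbb E_{\bs L,\bs F}[F_i B_i] \sim \hat\kappa_2\, s(n),
\qquad
\sum_{i=1}^n \operatorname{Var}_{\bs L,\bs F}(F_i B_i) \sim \hat\kappa_3\, s(n),
\]
which is precisely the content of Lemmas~\ref{lem:exp_Fi}--\ref{lem:sumsF_iW_i}; the appearance of $s(n)$ comes from the fact that $\sum_i W_i/(W_1+\cdots+W_i) \approx \log \bar\mu(T_n) \approx \log\bar\mu(n\mathbb E L)$ and $\bar\mu(x) = \int_0^x\mu \sim \mathrm e^{\beta\log(x)^\alpha}$ or $\sim \mathrm e^{\gamma x^\delta}$.

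With the decomposition and the moment asymptotics in hand, the one-node statement
\[
\frac{\Phi(u_n) - \hat\kappa_2 s(n)}{\sqrt{\hat\kappa_3 s(n)}} \xrightarrow{\sss d} \mathcal N(0,1)
\]
follows (conditionally on $(\bs L,\bs F)$, $(\bs L,\bs F)$-a.s.) from a Lindeberg-type central limit theorem for the triangular array $(F_i B_i)_{1\le i\le n}$: the summands are uniformly bounded-ish after truncation because $F_i \le L_i$ and the moment hypothesis $\mathbb E L^8 < \infty$ controls the tails of the $L_i$'s well enough (via Borel--Cantelli applied $(\bs L,\bs F)$-a.s.) for the Lindeberg condition to hold once the variance sum diverges, i.e.\ once $s(n)\to\infty$. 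This already gives the second bullet ($\mu=\mu_2$, $\delta=\nicefrac12$), where no joint statement is claimed. For the first bullet, the extra ingredient is asymptotic independence of $\Phi(u_n)$ and $\Phi(v_n)$. Here I would condition on the index $A_n := \xi(u_n)\wedge\xi(v_n)$ of the last common ancestor $u_n\wedge v_n$: before $A_n$ the two branches share all their $F_i$'s, and after $A_n$ they use disjoint, hence (conditionally) independent, collections of $F_i$'s and independent Bernoulli choices. One shows that $\Phi(u_n\wedge v_n) = o(\sqrt{s(n)})$ in probability — equivalently, that the depth-contribution accumulated up to the last common ancestor is negligible compared to $\sqrt{s(n)}$. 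For $\mu_1$ and for $\mu_2$ with $\delta<\nicefrac12$, the weight sequence grows fast enough that two independently weight-chosen nodes split off ``early'' (their common ancestor has index $A_n$ which, though it may be large, carries only $o(\sqrt{s(n)})$ worth of $F$-mass), so the shared part is asymptotically irrelevant and the two normalized sums converge jointly to independent Gaussians. Quantitatively this reduces to showing $\sum_{i\le A_n} F_i B_i^{\,\mathrm{shared}} = o(\sqrt{s(n)})$, for which one estimates $\mathbb E[s(A_n)]$ or the relevant partial weight-sum and uses that, when $\delta<\nicefrac12$, $s$ is sufficiently sub-$\sqrt{\cdot}$ on the scale of the split index.

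The main obstacle is exactly this last point: controlling the contribution of the common-ancestor part, i.e.\ proving $\Phi(u_n\wedge v_n)/\sqrt{s(n)} \xrightarrow{\sss p} 0$ (and, more precisely, that conditioning on this shared part does not bias the Gaussian limits). This is where the hypothesis $\delta\in(0,\nicefrac12)$ enters in the first bullet: when $\delta=\nicefrac12$ the shared part is of the same order as the fluctuations, the limit of $\Phi(u_n\wedge v_n)/\sqrt{s(n)}$ is a genuine nondegenerate random variable, and $\Phi(u_n)$, $\Phi(v_n)$ are no longer asymptotically independent — which is why only the marginal statement survives in that regime. Thus I expect the bulk of the work, beyond invoking the three technical lemmas for the marginal moment/CLT computations, to be: (1) the ancestral-chain decomposition into $\sum F_i B_i$; (2) a careful analysis of the law of the split index $A_n$ under the weight-proportional sampling of two nodes, showing the cumulative $F$-mass below $A_n$ is $o(\sqrt{s(n)})$ when $\delta<\nicefrac12$; and (3) assembling these via conditioning on $(\bs L,\bs F)$ and on $A_n$ to get the joint Gaussian limit with independent coordinates. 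Steps (1) and (3) are routine given the setup; step (2) is the crux.
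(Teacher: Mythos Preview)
Your proposal for the marginal statement is essentially the paper's proof: the decomposition $\Phi(u_n)\stackrel{d}{=}\sum_{i=1}^n F_iB_i$ with independent Bernoulli $B_i$ of parameter $W_i/S_i$ is exactly Corollary~\ref{cor:bernoullis}, and the Lindeberg CLT together with the moment estimates of Lemmas~\ref{lem:exp_Fi}--\ref{lem:sumsF_iW_i} gives Proposition~\ref{prop:marginals}, which is the second bullet and each marginal of the first.

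For the joint convergence, however, your picture of the mechanism is not quite the paper's, and the difference matters. You propose to condition on the split index $A_n$, argue that the shared $F$-mass below $A_n$ is $o(\sqrt{s(n)})$, and that the post-split parts are ``(conditionally) independent''. Two issues. First, the paper proves something much stronger than $\Phi(u_n\wedge v_n)=o(\sqrt{s(n)})$: it shows that $u_n\wedge v_n$ \emph{converges in distribution to a fixed random node} (Proposition~\ref{lem:LCA} and Lemma~\ref{lem:K}). The engine is not an estimate of $\mathbb E[s(A_n)]$ but the summability $\sum_{i\geq 1}(W_i/S_i)^2<\infty$ (Equation~\eqref{eq:delta_crucial}), which via Borel--Cantelli forces the last time two independent Bernoulli sequences simultaneously equal~$1$ to be almost surely finite; this is precisely where $\delta<\nicefrac12$ enters, and at $\delta=\nicefrac12$ the sum diverges. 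Your heuristic ``$A_n$ may be large but carries little $F$-mass'' is therefore off: in fact the LCA stabilises.

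Second, and more seriously, your independence step glosses over the main technical device. Given only the one-node decomposition, it is not clear how to realise $\Phi(u_n)$ and $\Phi(v_n)$ simultaneously as sums over \emph{independent} Bernoulli families; conditioning on the LCA and the tree does not automatically give you independent ancestor-indicators for the two branches. The paper handles this with a dedicated two-node coupling (Proposition~\ref{prop:2Dobrow}): one constructs $(\tilde{\mathcal T}_n,\tilde u_n,\tilde v_n)$ from two genuinely independent Bernoulli sequences $\bs B,\bs B'$, so that $C_n=\sum F_iB_i$ and $C'_n=\sum F_iB'_i$ are independent by construction, and then shows $|\Phi(\tilde u_n)-C_n|,|\Phi(\tilde v_n)-C'_n|\leq 2\sum_{i\leq K}F_i$ with $K=\max\{i:B_i=B'_i=1\}<\infty$ a.s. The joint CLT is then immediate from the marginal CLT for $C_n,C'_n$ and Slutsky. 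Without this coupling (or an equivalent construction), your step~(3) is a genuine gap.
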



\begin{remark}
Such a conditional weak convergence statement is typical of the random environment literature where this would be called a ``quenched" result valid for almost all realizations of the environment. Here, the sequence of run lengths could be thought of as the random environment. 
It implies annealed weak convergence statements: in particular the limit theorem holds if we just condition on $\bs L$ or $\bs F$ or even unconditionally. 
\end{remark}

The proof of this theorem is relatively technical and long, and we therefore choose to postpone it to Section~\ref{sec:proof_prop}, and first show how it implies our main results. Note that the conditions $\delta\in(0,\nicefrac12]$ in Theorem~\ref{monkeyMarkovLimitTheorem} and $\delta\in(0,\nicefrac12)$ in Theorem~\ref{th:cv_occupation} come from the conditions needed here in Theorem~\ref{th:sums_along_branches}: the convergence of the first marginal is enough to prove Theorem~\ref{monkeyMarkovLimitTheorem}, while joint convergence is needed for Theorem~\ref{th:cv_occupation}. 

\bigskip
Following the same proof as for Theorem~\ref{th:sums_along_branches}, just replacing $\Phi(\nu)$ by the simpler 
\[\Psi(\nu) = |\nu| +1 = \sum_{i=1}^n \bs 1_{\nu_i \preccurlyeq \nu},\]
one can prove convergence in probability of the {weighted} profile of the ${\bs W}$-{\sc wrrt}.

{\cec
\begin{definition}
The profile of a tree $t$ is the probability distribution of the height of a node taken uniformly at random in $t$. Equivalently, it is given by the following sum of Dirac masses:
$\frac1{|t|}\sum_{\nu\in t} \bs \delta_{|\nu|}$, 
where $|t|$ denotes the number of nodes in $t$, $|\nu|$ the height of node $\nu$ and $\bs \delta_x$ is the Dirac mass at $x$, for all $x\in \mathbb N$.
\end{definition}

We denote by 
\[\pi_n = \frac{\sum_{i=1}^n W_i \bs \delta_{|\nu_i|}}{\sum_{i=1}^n W_i}\]
the {weighted} profile of the $n$-node ${\bs W}$-{\sc wrrt} $\mathcal T_n$.
Note that since $\mathcal T_n$ is a random tree, then $\pi_n$ is a random 
probability distribution on~$\mathbb N$; it is the probability distribution of the height of a node taken weight-proportionally at random in $\mathcal T_n$. The following result states convergence in probability of this random probability distribution on the space of all probability distributions on $\mathbb N$ equipped with the weak topology.}

\begin{theorem}\label{th:profile}
Assume that $\mathbb E L^2<\infty$, and,
if $\mu=\mu_2$, assume that $\delta\in(0,\nicefrac12)$.
Then, when $n$ goes to infinity,
we have
\[\pi_n\big(\sqrt{s(n)}~\cdot~ + s(n)\big) \xrightarrow{\sss p} \mc N(0,1),\]
with respect to the weak topology.
\end{theorem}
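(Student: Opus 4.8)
The plan is to mimic the proof of Theorem~\ref{th:sums_along_branches}, but with the random variables $F_i$ replaced by the deterministic value $1$; concretely, with $\Psi(\nu) = |\nu| + 1 = \sum_{i=1}^n \bs 1_{\nu_i\preccurlyeq\nu}$ in place of $\Phi(\nu) = \sum_{i=1}^n F_i\bs 1_{\nu_i\preccurlyeq\nu}$. First I would observe that the structural reduction underlying Theorem~\ref{th:sums_along_branches} — that a node $u_n$ chosen weight-proportionally at random in the $\bs W$-{\sc wrrt}, together with its ancestral line, can be described in terms of the weights $\bs W$ alone — is exactly what is needed here as well, since the profile $\pi_n$ is by definition the law of $|u_n|$ when $u_n$ is chosen weight-proportionally. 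In that reduction one writes $\Psi(u_n)$ as a sum of independent indicator-type contributions along the chosen branch, and a central limit theorem (together with the law of large numbers for the mean $s(n)$) yields $(\Psi(u_n) - s(n))/\sqrt{s(n)} \xrightarrow{\sss d}\mc N(0,1)$ conditionally on $\bs L$ (equivalently on $\bs W$), $\bs L$-almost surely. The only change from Theorem~\ref{th:sums_along_branches} is that the summands are bounded by $1$, so the moment hypothesis can be relaxed from $\mathbb E L^8<\infty$ down to $\mathbb E L^2 < \infty$: finite variance of the run-lengths suffices to control $\sum_i W_i$ and hence the normalization $s(n)$, and the Lindeberg-type condition for the branch-sum is automatic since the increments are $0/1$-valued.

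Next, to upgrade one-dimensional convergence in distribution to convergence \emph{in probability} of the random measure $\pi_n$ on the space of probability measures on $\mathbb N$ (with the weak topology), I would use the standard criterion alluded to in the remark after Theorem~\ref{th:cv_occupation}: a sequence of random probability measures converges in probability to a deterministic limit $\mu_\infty$ if and only if, for two points $u_n, v_n$ sampled independently from the (random) measure, the pair $(|u_n|, |v_n|)$ converges jointly in distribution to a product of two independent copies of $\mu_\infty$. This is precisely the two-node statement: I would prove that, conditionally on $\bs L$, $\bs L$-almost surely,
\[
\left(\frac{\Psi(u_n) - s(n)}{\sqrt{s(n)}}, \frac{\Psi(v_n) - s(n)}{\sqrt{s(n)}}\right) \xrightarrow{\sss d} (\Lambda_1, \Lambda_2)
\]
with $\Lambda_1, \Lambda_2$ independent standard Gaussians. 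The mechanism here is the same as in the proof of Theorem~\ref{th:sums_along_branches}: the two chosen branches share a common ancestral segment whose length is asymptotically negligible compared to $s(n)$ (this is where the restriction $\delta\in(0,\nicefrac12)$ for $\mu=\mu_2$ enters, exactly as in Theorem~\ref{th:sums_along_branches}, to guarantee that the last common ancestor is low enough), so after the branching point the two branch-sums evolve independently, and each is asymptotically Gaussian by the one-dimensional argument. Then I would invoke the criterion to conclude $\pi_n(\sqrt{s(n)}\cdot + s(n))\xrightarrow{\sss p}\mc N(0,1)$ with respect to the weak topology.

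The main obstacle is the quantitative control of the shared initial segment of the two random branches — showing that the height of $u_n\wedge v_n$ is $o(\sqrt{s(n)})$ with high probability, uniformly enough to make the "asymptotic independence after the last common ancestor" rigorous, and doing so only under the weaker hypothesis $\mathbb E L^2<\infty$. I expect this to be handled by the same estimates on the $\bs W$-{\sc wrrt} developed in Section~\ref{sec:proof_prop} for Theorem~\ref{th:sums_along_branches}, since those estimates already isolate the contribution of the common ancestor; one just needs to check that the parts of that analysis used for the \emph{height} statistic $\Psi$ (as opposed to the weighted sum $\Phi$) go through with finite second moment rather than finite eighth moment. The remaining bookkeeping — relating $\pi_n$ to the law of $|u_n|$, verifying that the deterministic centering and scaling are $s(n)$ and $\sqrt{s(n)}$ in all three regimes of $s$, and checking the weak-topology convergence criterion — is routine and parallels the corresponding steps in Section~\ref{sec:CLT+}.
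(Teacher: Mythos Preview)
Your proposal is correct and follows essentially the same approach as the paper: replace $\Phi$ by $\Psi$ in the proof of Theorem~\ref{th:sums_along_branches} to obtain the joint CLT for $(\Psi(u_n),\Psi(v_n))$, then invoke the two-sample criterion (the paper cites \cite[Lemma~3.1]{MR3629870}) to pass from joint convergence of two independent samples to convergence in probability of the random profile measure. Your discussion of why the moment condition relaxes to $\mathbb E L^2<\infty$ (bounded $0/1$ increments) and of the role of the restriction $\delta\in(0,\nicefrac12)$ for $\mu=\mu_2$ (finiteness of the last common ancestor via $\sum W_i^2/S_i^2<\infty$) matches the paper's reasoning.
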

The profile of a random tree is the distribution of the height of a node taken uniformly at random in this tree. 
Profiles of random trees are widely studied in the literature: see, e.g., Drmota and Gittenberger~\cite{DrmotaGittenberger97} for the Catalan tree,
Chauvin, Drmota and Jabbour-Hattab~\cite{CDJH01} and Chauvin, Klein, Marckert and Rouault~\cite{CKMR05} for the binary search tree,
Schopp~\cite{Schopp10} for the $m$-ary increasing tree,
Katona~\cite{Katona05} and Sulzbach~\cite{Sulzbach08} for the preferential attachment tree, and the very recent universal result of Kabluchko, Marynych, and Sulzbach~\cite{KMS17}. In all these examples, the height of a typical node is of order $\sqrt{n}$ or $\log n$ where $n$ is the number of nodes in the tree. 
However, in our {\sc wrrt}s, we exhibit typical heights of order $\log\log n$, $(\log n)^\alpha$ and even~$n^{\delta}$ (for $\delta\leq 1/2$).




\subsection{A key coupling}

We now give two results which help in understanding why Theorem \ref{th:sums_along_branches} might be true. 
We link each one of the summands defining $\Phi$ (or more easily $\Psi$) to independent random variables. 
This is done by constructing, in a consistent manner, the $\bs w$-{\sc wrrt} together with vertices that are sampled weight-proportionally at random from the $n$-node tree. We refer the reader to Figure~\ref{fig:coupling1} for a visual aid.
\begin{figure}
\includegraphics[width=.8\textwidth]{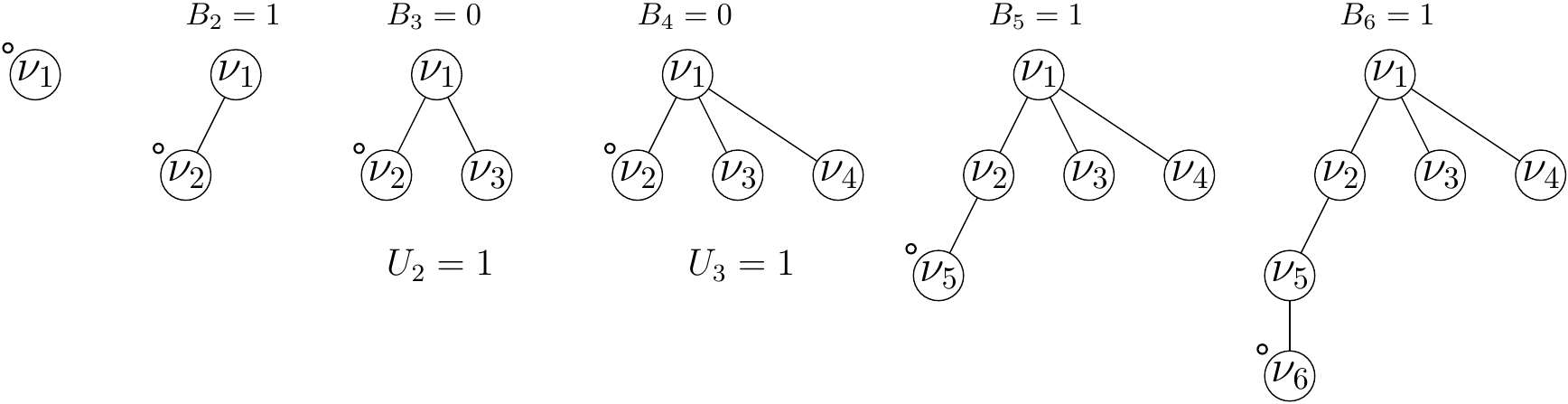}
\caption{A realization of the couples $(\tilde{\mathcal T}_n, \tilde u_n)$, for $1\leq n\leq 6$ defined in Proposition~\ref{prop:Dobrow}. For each $n$, the white dot marks the node that equals $\tilde u_n$ in $\tilde {\mathcal T}_n$.}
\label{fig:coupling1}
\end{figure}

Let $\bs w= (w_i)_{i\geq 1}$ be a sequence of positive real numbers 
and $\bs s=(s_i)_{i\geq 1}$ the sequence of its cumulative sums: for all $i\geq 1$, $s_i = \sum_{j=1}^i w_j$. 
Let $\bs U=(U_n)_{n\geq 1}$ be a sequence of independent random variables such that, for all $i, n\geq 1$, $\proba{U_n=i}=\nicefrac{w_i}{s_n}$. 
Let $\bs B = (B_n)_{n\geq 1}$ be a sequence of independent random variables 
(between themselves and also of~$\bs U$) 
such that, for all $n\geq 1$, $B_n$ is Bernoulli-distributed with parameter~$\nicefrac{w_n}{s_n}$. 

Let $\nu_1=\varnothing$ and $\tilde{\mathcal T}_1$ be the tree equal to $\{\nu_1\}$; 
also let $\tilde u_1=\nu_1$. 
For all $n\geq 1$, given $(\tilde{\mathcal T}_{n-1},\tilde u_{n-1})$, 
we define $(\tilde{\mathcal T}_n,\tilde u_n)$ as follows: 
\begin{itemize}[topsep=0pt]
\item if $B_n=1$, set $\tilde u_n=\nu_n$ and $\tilde \xi(n)=\tilde u_{n-1}$; 
\item if $B_n=0$, set $\tilde u_n=\tilde u_{n-1}$ and $\tilde \xi(n)=U_{n-1}$.
\end{itemize}
Finally, let $\tilde{\mathcal T}_n$ be the tree obtained by adding node $\nu_n$ to $\tilde{\mathcal T}_{n-1}$ with an edge between $\nu_{\tilde \xi(n)}$ and $\nu_n$.

\begin{proposition}
\label{prop:Dobrow}
For all $n\geq 1$, $\tilde{\mathcal T}_n$ is distributed as the $n$-node ${\bs w}$-{\sc wrrt}, and, given $\tilde{\mathcal T}_n$, the node $\tilde u_n$ is taken weight-proportionally at random in~$\tilde{\mathcal T}_n$.
\end{proposition}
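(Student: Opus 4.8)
The plan is to prove both assertions simultaneously by induction on $n$, maintaining as the inductive hypothesis the joint statement that $(\tilde{\mathcal T}_n, \tilde u_n)$ has the law of a $\bs w$-{\sc wrrt} together with a weight-proportionally chosen node, i.e.\ that for every $n$-node increasing tree $t$ and every node $\nu_j$ of $t$,
\[
\mathbb P(\tilde{\mathcal T}_n = t,\ \tilde u_n = \nu_j)
= \left(\prod_{i=2}^n \frac{w_{\pi_t(i)}}{s_{i-1}}\right)\cdot \frac{w_j}{s_n}.
\]
Summing over $j$ recovers \eqref{eq:wrrtLaw}, so this single identity captures both claims. The base case $n=1$ is immediate since $\tilde{\mathcal T}_1 = \{\nu_1\}$, $\tilde u_1 = \nu_1$, and the empty product equals $1$ while $w_1/s_1 = 1$.

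For the inductive step, I would condition on $(\tilde{\mathcal T}_{n-1}, \tilde u_{n-1})$ and compute the transition probabilities. There are two ways to reach a given pair $(t, \nu_j)$ with $|t| = n$. Write $t'$ for $t$ with $\nu_n$ deleted (an $(n-1)$-node increasing tree, since $\nu_n$ is necessarily a leaf of an increasing tree), and let $p = \pi_t(n)$ be the index of the parent of $\nu_n$ in $t$. If $j = n$ (the new node is the marked one), this forces $B_n = 1$ and $\tilde\xi(n) = \tilde u_{n-1}$, so we need $\tilde u_{n-1} = \nu_p$ and $\tilde{\mathcal T}_{n-1} = t'$; this contributes
\[
\mathbb P(\tilde{\mathcal T}_{n-1}=t',\ \tilde u_{n-1}=\nu_p)\cdot \mathbb P(B_n = 1)
= \left(\prod_{i=2}^{n-1}\frac{w_{\pi_{t'}(i)}}{s_{i-1}}\right)\frac{w_p}{s_{n-1}}\cdot \frac{w_n}{s_n}.
\]
If $j < n$, this forces $B_n = 0$, $\tilde u_{n-1} = \tilde u_n = \nu_j$, and $\tilde\xi(n) = U_{n-1} = p$, so we need $\tilde{\mathcal T}_{n-1} = t'$ and $\tilde u_{n-1} = \nu_j$; this contributes
\[
\mathbb P(\tilde{\mathcal T}_{n-1}=t',\ \tilde u_{n-1}=\nu_j)\cdot \mathbb P(B_n = 0)\cdot \mathbb P(U_{n-1}=p)
= \left(\prod_{i=2}^{n-1}\frac{w_{\pi_{t'}(i)}}{s_{i-1}}\right)\frac{w_j}{s_{n-1}}\cdot \frac{s_{n-1}-w_n}{s_n}\cdot \frac{w_p}{s_{n-1}}.
\]
In both cases, using that $\pi_{t'}$ and $\pi_t$ agree on $\{2,\dots,n-1\}$ and that $\pi_t(n) = p$, the right-hand side collapses to $\big(\prod_{i=2}^{n}\frac{w_{\pi_t(i)}}{s_{i-1}}\big)\cdot \frac{w_j}{s_n}$, which is exactly the desired expression; in the $j<n$ case the factor $(s_{n-1}-w_n)/s_{n-1}$ must combine with the $w_n$ that should appear — here one checks $\frac{w_j}{s_{n-1}}\cdot\frac{s_{n-1}-w_n}{s_n}\cdot\frac{w_p}{s_{n-1}}$ against $\frac{w_p}{s_{n-1}}\cdot\frac{w_j}{s_n}$, so a brief consistency check on the Bernoulli normalization is needed.

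The one genuinely delicate point — and the place I expect the bookkeeping to be most error-prone rather than conceptually hard — is verifying that the normalizing constants line up in the $B_n = 0$ branch: the weight-proportional choice of $\tilde u_{n-1}$ in the $(n-1)$-node tree (denominator $s_{n-1}$) must be upgraded to a weight-proportional choice in the $n$-node tree (denominator $s_n$), and the only source of the missing mass is precisely the event $B_n = 1$, under which the fresh node $\nu_n$ becomes marked with the correct probability $w_n/s_n$. So the real content is the identity $\frac{w_j}{s_{n-1}}\cdot\mathbb P(B_n=0) + \mathbf 1_{j=n}\,\mathbb P(B_n=1) = \frac{w_j}{s_n}$ for $j \le n$, together with the observation that, conditionally on $B_n = 0$, the attachment index $U_{n-1}$ has exactly the $\bs w$-{\sc wrrt} distribution on $\{1,\dots,n-1\}$, so that the tree marginal is preserved. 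Once these two elementary facts are recorded, the induction closes. I would also remark that the construction is exactly the ``marked node'' coupling of Dobrow (whence the name of the proposition) adapted from the {\sc rrt} to the weighted setting, and that it is precisely this consistency across $n$ that makes $\tilde u_n$ into a process rather than a fresh sample at each step — which is what the later analysis of $\Phi(\tilde u_n)$, $\Psi(\tilde u_n)$ exploits.
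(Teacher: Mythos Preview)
Your approach is essentially identical to the paper's: both prove by induction the joint formula
\[
\mathbb P(\tilde{\mathcal T}_n=t,\ \tilde u_n=\nu_j)=\Big(\prod_{i=2}^n \frac{w_{\pi_t(i)}}{s_{i-1}}\Big)\frac{w_j}{s_n},
\]
splitting the inductive step according to whether $j$ equals the index of the newly added node. The paper in fact only writes out the case $j\le n$ explicitly (going from $n$ to $n+1$), while you treat both cases; otherwise the arguments coincide.

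There is one arithmetic slip to fix: you write $\mathbb P(B_n=0)=(s_{n-1}-w_n)/s_n$, but since $B_n$ is Bernoulli$(w_n/s_n)$ the correct value is $(s_n-w_n)/s_n=s_{n-1}/s_n$. With this correction the $j<n$ branch reads
\[
\Big(\prod_{i=2}^{n-1}\frac{w_{\pi_{t'}(i)}}{s_{i-1}}\Big)\frac{w_j}{s_{n-1}}\cdot\frac{s_{n-1}}{s_n}\cdot\frac{w_p}{s_{n-1}}
=\Big(\prod_{i=2}^{n}\frac{w_{\pi_t(i)}}{s_{i-1}}\Big)\frac{w_j}{s_n},
\]
and no further ``consistency check'' is needed --- the normalization closes on the nose, exactly as your displayed identity $\frac{w_j}{s_{n-1}}\mathbb P(B_n=0)+\mathbf 1_{j=n}\mathbb P(B_n=1)=\frac{w_j}{s_n}$ predicts.
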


Note that the sequence $(\tilde{\mathcal T}_n, \tilde u_n)_{n\geq 1}$ 
is such that
\begin{itemize}
\item the parent of $\tilde u_n$ is $\tilde u_{n-1}$ if, and only if, $B_n = 1$;
\item $\tilde u_n = \tilde u_{n-1}$ if, and only if, $B_n = 0$.
\end{itemize}
This implies in particular that $\{B_i=1\}=\{\nu_i\preccurlyeq \tilde u_n\}$, 
for all integers~$n, i\geq 1$; we thus get the following corollary:
\begin{corollary}\label{cor:bernoullis}
Let $\bs w=(w_i)_{i\geq 1}$ be a sequence of positive real numbers.
For all integers $n\geq 1$, let $u_n$ be a node chosen at random in the $n$-node $\bs w$-{\sc wrrt} $\mathcal T_n$ with probability proportional to the weights. 
Then, 
the random variables
$(\bs 1_{\nu_i \preccurlyeq u_n})_{1\leq i\leq n}$ are independent Bernoulli random variables of 
respective parameters $\nicefrac{w_i}{s_i}$, 
where $s_i = \sum_{j=1}^i w_j$ ($\forall i\geq 1$).
\end{corollary}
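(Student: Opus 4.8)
The plan is to prove Corollary~\ref{cor:bernoullis} as an immediate consequence of Proposition~\ref{prop:Dobrow} and the explicit coupling that precedes it. First I would recall the coupling $(\tilde{\mathcal T}_n, \tilde u_n)_{n\geq 1}$ built from the independent families $\bs U = (U_n)_{n\geq 1}$ and $\bs B = (B_n)_{n\geq 1}$, where $B_n$ is Bernoulli of parameter $w_n/s_n$ and the $B_n$'s are mutually independent and independent of $\bs U$. By Proposition~\ref{prop:Dobrow}, for every fixed $n$ the pair $(\tilde{\mathcal T}_n, \tilde u_n)$ has the same joint law as $(\mathcal T_n, u_n)$, where $\mathcal T_n$ is the $n$-node $\bs w$-{\sc wrrt} and $u_n$ is a node drawn from it weight-proportionally. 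Hence it suffices to verify the claimed statement for the coupled objects $(\tilde{\mathcal T}_n, \tilde u_n)$, and then transport it along the equality in distribution.

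The key observation, already noted in the paragraph following Proposition~\ref{prop:Dobrow}, is the event identity $\{\nu_i \preccurlyeq \tilde u_n\} = \{B_i = 1\}$ for all $1 \le i \le n$. I would justify it by a short induction on $n$: the recursive construction sets $\tilde u_n = \nu_n$ (so the ancestors of $\tilde u_n$ gain $\nu_n$) exactly when $B_n = 1$, and otherwise keeps $\tilde u_n = \tilde u_{n-1}$, so that the ancestor set of $\tilde u_n$ restricted to $\{\nu_1,\dots,\nu_{n-1}\}$ is unchanged; thus $\nu_i \preccurlyeq \tilde u_n$ iff $\nu_i \preccurlyeq \tilde u_{n-1}$ iff (by the induction hypothesis, valid since $B_i$ for $i < n$ does not depend on later steps) $B_i = 1$, while the case $i = n$ is the base observation. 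Consequently, on the coupling, the vector $(\bs 1_{\nu_i \preccurlyeq \tilde u_n})_{1 \le i \le n}$ is literally equal to $(B_i)_{1 \le i \le n}$, which by construction is a vector of independent Bernoulli random variables of parameters $w_i/s_i$. Transporting this through the equality in law from Proposition~\ref{prop:Dobrow} gives the corollary.

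One subtlety worth spelling out: the statement concerns the vector $(\bs 1_{\nu_i \preccurlyeq u_n})_{1 \le i \le n}$ for a \emph{fixed} $n$, and the distributional identification of $(\tilde{\mathcal T}_n,\tilde u_n)$ with $(\mathcal T_n,u_n)$ is also only asserted for fixed $n$; since the whole vector $(\bs 1_{\nu_i\preccurlyeq u_n})_{1\le i\le n}$ is a measurable function of the pair $(\mathcal T_n, u_n)$ (indeed of the ancestry structure of $u_n$ inside $\mathcal T_n$), the single-$n$ identity suffices and no consistency across $n$ is needed. I do not anticipate a genuine obstacle here; the only thing requiring care is the clean statement and proof of the ancestor-identity $\{\nu_i \preccurlyeq \tilde u_n\} = \{B_i = 1\}$, which is the one place where the recursive definition of the coupling must be used, and the book-keeping that the $B_i$'s with $i \le n$ form an independent family with the stated parameters — this is immediate from how $\bs B$ was defined, so it is really just a matter of citing the construction.
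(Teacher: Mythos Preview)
Your proposal is correct and follows essentially the same approach as the paper: the paper derives the corollary directly from Proposition~\ref{prop:Dobrow} together with the observation (stated just before the corollary) that $\{B_i=1\}=\{\nu_i\preccurlyeq \tilde u_n\}$ for all $1\le i\le n$, which is exactly the event identity you propose to verify by induction. Your write-up is slightly more detailed than the paper's one-line justification, but the logic is identical.
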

\begin{remark}[Discussion of Corollary~\ref{cor:bernoullis} with respect to the literature]
{\cec Corollary~\ref{cor:bernoullis} is a classical result in the case when the weights are constant (and the {\sc wrrt} is thus equal to the {\sc rrt}); it is for example proved in~\cite{MR969872}. The proof of~\cite{MR969872} uses the link between the height of a uniform random node and the number of records in a uniform random permutation, which is well known to be a sum of independent Bernoulli random variables.}
Note that~\cite{MR1401472}
also states a version of this result for the (unweighted) random recursive tree case but
the details of the proof are omitted.
\cite{MR2729386}
give an alternative proof of Devroye's result;
their argument is a bijective one, and could be adapted to the {\sc wrrt} case. 
{\cec Note that our ``coupling'' approach was used in~\cite{CH14} and \cite{Haas} for aggregating trees.}
\end{remark}


This corollary implies that, for all $n\geq 1$, $\Psi(u_n) = |u_n| + 1 = \sum_{i=1}^n B_i$, 
which is a sum of independent Bernoulli random variables.
Similarly, conditionally on $\bs L$ and $\bs F$, the sum defining $\Phi(u_n)$ in Theorem~\ref{th:sums_along_branches} is an (inhomogeneous) random walk. Controlling the behavior of $\bs L$ and $\bs F$ then yields a central limit theorem for each marginal in Theorem \ref{th:sums_along_branches}. 
\begin{proof}[Proof of Proposition \ref{prop:Dobrow}]
Let $t$ be an increasing tree with nodes $\nu_1,\ldots, \nu_n$. 
Denote the index of the parent of $\nu_i$ in $t$ by $\pi_t(i)$. 
The assertion follows from the formula
\[
\proba{\tilde{\mathcal T}_n=t, \tilde u_n=\nu_j}
=\bra{\prod_{i=2}^n \frac{w_{\pi_t(i)}}{s_{i-1}}}\frac{w_j}{s_n}, 
\]which we now prove by induction, the case $n=1$ being trivial. 
If the above formula is valid for~$n$, let $t$ be an increasing tree with nodes $\nu_1,\ldots, \nu_{n+1}$ and let $t'$ be the tree obtained from $t$ by removing node $\nu_{n+1}$. 
For $j\leq n$, note that $\tilde u_{n+1}=\nu_j$ if, and only if, $B_{n+1}=0$. 
Therefore, we get: 
\begin{linenomath}
\begin{align*}
\proba{\tilde{\mathcal T}_{n+1}=t, \tilde u_{n+1}=\nu_j}
&=\proba{\tilde{\mathcal T}_n=t', \tilde u_{n}=\nu_j, U_{n+1}=\pi_{t}(\nu_{n+1}),B_{n+1}=0}
\\&=\bra{\prod_{i=2}^n \frac{w_{\pi_{t'}(i)}}{s_{i-1}}}\frac{w_j}{s_n}\frac{w_{\pi_t(\nu_{n+1})}}{s_n}\frac{s_n}{s_{n+1}}
\\&=\bra{\prod_{i=2}^{n+1} \frac{w_{\pi_{t}(i)}}{s_{i-1}}}\frac{w_j}{s_{n+1}},
\end{align*}
\end{linenomath}which finishes the proof. 
\end{proof}

\begin{figure}
\includegraphics[width=.8\textwidth]{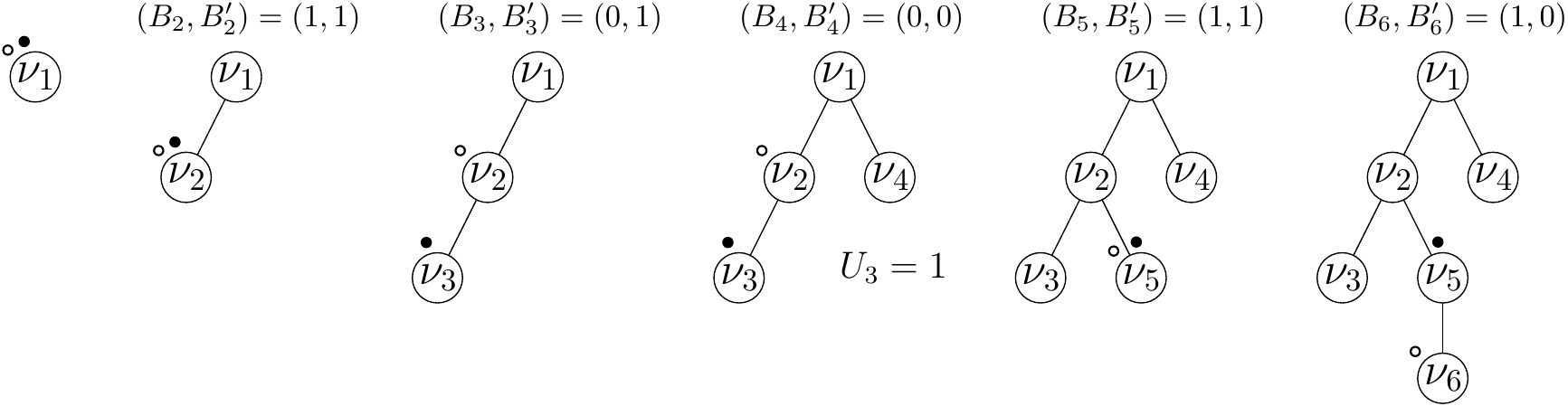}
\caption{A realization of the triples $(\tilde{\mathcal T}_n, \tilde u_n, \tilde v_n)$, for $1\leq n\leq 6$ defined in Proposition~\ref{prop:2Dobrow}. For each $n$, the white dot marks the node that equals $\tilde u_n$ in $\tilde {\mathcal T}_n$, the black dot marks the node that equals $\tilde v_n$.}
\label{fig:coupling}
\end{figure}

To get joint convergence of the two marginals in Theorem~\ref{th:sums_along_branches}, 
one needs to improve the coupling as follows (see Figure~\ref{fig:coupling} for a realization of this coupling).
Additionally to $\bs B$ and $\bs U$, we consider a sequence $\bs B'=(B'_i)_{i\geq 1}$ of independent (between themselves and of $\bs B$ and $\bs U$) Bernoulli random variables with respective parameters $\nicefrac{w_i}{s_i}$. 

Let $\nu_1 = \varnothing$, and $\tilde{\mathcal T}_1=\{\nu_1\}$; also set $\tilde u_1=\tilde v_1=\nu_1$. 
Recursively, for all $n\geq 2$, given $(\tilde{\mathcal T}_{n-1},\tilde u_{n-1}, \tilde v_{n-1})$, we define $(\tilde{\mathcal T}_n, \tilde u_n,\tilde v_n)$ as follows: 
\begin{itemize}
\item if $(B_n,B_n')=(0,0)$, we set $\tilde \xi(n)=U_{n-1}$, $\tilde u_n=\tilde u_{n-1}$ and $\tilde v_n=\tilde v_{n-1}$;
\item if $(B_n,B_n')=(1,0)$, we set $\tilde \xi(n)=\tilde u_{n-1}$, $\tilde u_n=\nu_n$ and $\tilde v_n=\tilde v_{n-1}$; 
\item if $(B_n,B_n')=(0,1)$, we set $\tilde \xi(n)=\tilde v_{n-1}$, $\tilde u_n=\tilde u_{n-1}$ and $\tilde v_n=\nu_n$; 
\item if $(B_n,B_n')=(1,1)$, we set $\tilde \xi(n)=\tilde u_{n-1}$, $\tilde u_n=\tilde v_n=\nu_n$. 
\end{itemize}
Finally, define $\tilde{\mathcal T}_{n}$ as the tree obtained by adding $\nu_{n}$ to $\tilde{\mathcal T}_{n-1}$ and a new edge from $\nu_{n}$ to $\nu_{\tilde \xi(n)}$.

\begin{proposition}
\label{prop:2Dobrow}
For every $n\geq 1$,  $\tilde {\mathcal T}_n$ is distributed as the $n$-node ${\bs w}$-{\sc wrrt}, and, given $\tilde {\mathcal T}_n$, the nodes $\tilde u_n$ and $\tilde v_n$ are independent, and taken weight-proportionally at random in $\tilde {\mathcal T}_n$. 
\end{proposition}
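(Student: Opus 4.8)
The plan is to mimic, with one extra coordinate, the proof of Proposition~\ref{prop:Dobrow}. I would prove by induction on $n$ the joint formula
\[
\proba{\tilde{\mathcal T}_n=t,\ \tilde u_n=\nu_j,\ \tilde v_n=\nu_k}
=\Bigl[\prod_{i=2}^{n}\frac{w_{\pi_t(i)}}{s_{i-1}}\Bigr]\frac{w_j}{s_n}\,\frac{w_k}{s_n},
\]
valid for every increasing tree $t$ on $\nu_1,\dots,\nu_n$ and every pair $1\le j,k\le n$ (with $\pi_t(i)$ the index of the parent of $\nu_i$ in $t$). Granting this, summing over $j$ and $k$ and using $\sum_{j}w_j/s_n=1$ recovers the {\sc wrrt} law~\eqref{eq:wrrtLaw}, so $\tilde{\mathcal T}_n$ is an $n$-node $\bs w$-{\sc wrrt}; and dividing the displayed identity by $\proba{\tilde{\mathcal T}_n=t}$ shows that, conditionally on $\tilde{\mathcal T}_n$, the pair $(\tilde u_n,\tilde v_n)$ has product conditional law $(w_j/s_n)(w_k/s_n)$, which is exactly the assertion that $\tilde u_n$ and $\tilde v_n$ are independent and each weight-proportionally chosen in $\tilde{\mathcal T}_n$.

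For the induction step (the base case $n=1$ being immediate), I would fix $t$ on $\nu_1,\dots,\nu_{n+1}$, write $t'$ for $t$ with $\nu_{n+1}$ deleted, set $p=\pi_t(n+1)$, and condition on $(B_{n+1},B_{n+1}')$ — and, in the $(0,0)$ case, also on $U_n$ — all of which are independent of $(\tilde{\mathcal T}_n,\tilde u_n,\tilde v_n)$ by the construction. In each of the four sub-cases, the event $\{\tilde{\mathcal T}_{n+1}=t,\ \tilde u_{n+1}=\nu_j,\ \tilde v_{n+1}=\nu_k\}$ translates into an explicit event on the time-$n$ triple (a constraint $\tilde{\mathcal T}_n=t'$, a constraint on $\tilde u_n$ and/or $\tilde v_n$, possibly $U_n=p$) intersected with a prescription of the two fresh coins; the induction hypothesis then produces a product, and the single elementary identity $\tfrac1{s_n}\bigl(1-\tfrac{w_{n+1}}{s_{n+1}}\bigr)=\tfrac1{s_{n+1}}$ (valid since $s_{n+1}=s_n+w_{n+1}$), used once or twice, collapses it to $\bigl[\prod_{i=2}^{n+1}\tfrac{w_{\pi_t(i)}}{s_{i-1}}\bigr](w_j/s_{n+1})(w_k/s_{n+1})$.

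The step I expect to be the only genuinely delicate one — and the reason this is not a mere rerun of Proposition~\ref{prop:Dobrow} — is the sub-case $(B_{n+1},B_{n+1}')=(1,1)$, in which $\tilde u_{n+1}=\tilde v_{n+1}=\nu_{n+1}$ and $\tilde\xi(n+1)=\tilde u_n$: here the only constraint on the time-$n$ configuration is $\tilde u_n=\nu_p$, while $\tilde v_n$ is left free, so I would have to marginalise, writing $\proba{\tilde{\mathcal T}_n=t',\tilde u_n=\nu_p}=\sum_{k'=1}^{n}\proba{\tilde{\mathcal T}_n=t',\tilde u_n=\nu_p,\tilde v_n=\nu_{k'}}$ and using $\sum_{k'}w_{k'}/s_n=1$ to land on the target formula with $j=k=n+1$. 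This also explains why the induction hypothesis must be phrased for \emph{all} pairs $(j,k)$, including $j=k$: the two marked vertices are permitted to coincide, and ``independent'' in the conclusion is to be read in that unrestricted sense. A tempting shortcut — arguing that $(\tilde{\mathcal T}_n,\tilde u_n)$ by itself already obeys Proposition~\ref{prop:Dobrow} — does not obviously work, because in the $(0,1)$ case the tree-growth rule attaches $\nu_n$ to $\tilde v_{n-1}$ rather than to $U_{n-1}$ or $\tilde u_{n-1}$, so the marginal dynamics of $(\tilde{\mathcal T}_n,\tilde u_n)$ are not literally those of that proposition; the joint induction above sidesteps this.
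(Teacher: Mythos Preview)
Your proposal is correct and follows essentially the same route as the paper: an induction on $n$ establishing the joint formula $\proba{\tilde{\mathcal T}_n=t,\tilde u_n=\nu_j,\tilde v_n=\nu_k}=\bigl[\prod_{i=2}^n w_{\pi_t(i)}/s_{i-1}\bigr](w_j/s_n)(w_k/s_n)$, with the induction step split according to the value of $(B_{n+1},B'_{n+1})$. Your treatment of the $(1,1)$ case---marginalising the induction hypothesis over $\tilde v_n$ to recover $\proba{\tilde{\mathcal T}_n=t',\tilde u_n=\nu_p}$---is in fact more careful than the paper's own write-up, which at that step invokes the event $\{U_{n+1}=\pi_t(n+1)\}$ (a slip, since when both coins are $1$ the parent is $\tilde u_n$, not a fresh $U$); the numerical outcome the paper records is nonetheless the correct one, and coincides with yours.
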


Note that, when $n$ evolves, $\tilde u_n$ stays on one branch of the tree, and occasionally increases its height by one (when $B_n = 1$). 
This is not the case for $\tilde v_n$: $\tilde v_n$ evolves along one branch of the tree, occasionally increasing its height by one (if $B'_n = 1$ and $B_n = 0$), 
except at some random times ($B_n = B'_n = 1$) 
when it ``jumps'' to take the same value as $\tilde u_n$ 
(see Figure~\ref{fig:coupling}, where such a jumps occurs when $n=5$).
We will show later that, for the memory kernels of Theorems~\ref{th:cv_occupation} and~\ref{th:sums_along_branches},
this ``jumping'' only happens a finite number of times and, after that (random) time, 
$\tilde u_n \neq \tilde v_n$ for all $n$, and the last common ancestor of $\tilde u_n$ and $\tilde v_n$ stays constant in~$n$.
This observation is formalized in the forthcoming Proposition \ref{lem:LCA}; 
we later show that it implies the asymptotic independence in the joint convergence of Theorem~\ref{th:sums_along_branches}. 

\begin{proof} 
If $t$ is any increasing tree with nodes $\nu_1,\ldots\nu_n$ and $\pi_t(i)$ is the number of the parent of $\nu_i$ in the tree $t$, we assert that
\[
\proba{\tilde{\mathcal T}_n=t,
\tilde u_n=\nu_j,\tilde v_n=\nu_k}=\bra{\prod_{i=2}^n \frac{w_{\pi_t(i)}}{s_{i-1}}}
\frac{w_j}{s_n}\frac{w_k}{s_n}.
\]Let us prove this by induction, the case $n=1$ being trivial. 
Note that, by construction,  $\big(\tilde {\mathcal T}_n,\tilde u_n,\tilde v_n\big)$, 
$U_n$, $B_{n+1}$ and $B'_{n+1}$ are independent. 
Suppose now that the above formula holds true and let $t$ be an increasing tree on $n+1$ nodes. Let $t'$ be the tree obtained from $t$ by deleting node $\nu_{n+1}$ and the edge from $\nu_{n+1}$ to its parent. 
For $j,k\leq n$, $\tilde u_{n+1}=\nu_j$ if, and only if, $B_{n+1}=0$ (likewise for $\tilde v_{n+1}$); therefore, we get: 
\ger{\begin{linenomath}
\begin{align*}
&\proba{\tilde{\mathcal T}_{n+1}=t,
\tilde u_{n+1}=\nu_j,\tilde v_{n+1}=\nu_k}
\\&=
\proba{B_{n+1}=0,B'_{n+1}=0, \tilde{\mathcal T}_{n}=t',U_{n}=\pi_t(n+1),\tilde u_n=\nu_j,\tilde v_n=\nu_k}
\\
&=\frac{s_n}{s_{n+1}}\frac{s_n}{s_{n+1}}
\proba{\tilde{\mathcal T}_{n}=t', 
\tilde u_n=\nu_j,\tilde v_n=\nu_k}\proba{U_{n+1}=\pi_t(n+1)}
\\&=\frac{s_n}{s_{n+1}}\frac{s_n}{s_{n+1}}\bra{\prod_{i=2}^n \frac{w_{\pi_t(i)}}{s_{i-1}}}\frac{w_{\pi_t(n+1)}}{s_{n}} \frac{w_j}{s_n}\frac{w_k}{s_n}
\\&=\bra{\prod_{i=2}^{n+1} \frac{w_{\pi_t(i)}}{s_{i-1}}}
\frac{w_j}{s_{n+1}}\frac{w_k}{s_{n+1}}.
\end{align*}
\end{linenomath}}If $\nu_j\leq n$ and $\nu_k=n+1$, then
\ger{
\begin{linenomath}
\begin{align*}
\proba{\tilde{\mathcal T}_{n+1}=t,\tilde u_{n+1}=\nu_j,\tilde v_{n+1}=\nu_{n+1}}
&=\proba{B_{n+1}=0, B'_{n+1}=1, \tilde{\mathcal T}_{n}=t', \tilde v_n=\pi_t(n+1),\tilde u_{n}=\nu_j}
\\
&=\frac{S_n}{S_{n+1}}\frac{w_{n+1}}{s_{n+1}}\bra{\prod_{i=2}^n \frac{w_{\pi_t(i)}}{s_{i-1}} } 
\frac{w_{\pi_t(n+1)}}{s_{n}} 
\frac{w_j}{s_n}
\\&=\bra{\prod_{i=2}^{n+1} \frac{w_{\pi_t(i)}}{s_{i-1}} }\frac{w_j}{s_{n+1}}\frac{w_{n+1}}{s_{n+1}}.
\end{align*}
\end{linenomath}}Finally, if both $j, k=n+1$, we get: 
\ger{
\begin{linenomath}
\begin{align*}
\proba{\tilde{\mathcal T}_{n+1}=t,\xi_{n+2}=\nu_i,\tilde u_{n+1}=\nu_j,\tilde v_{n+1}=\nu_k}
&=
\proba{B_{n+1}=1, B'_{n+1}=1,\tilde{\mathcal T}_{n}=t',U_{n+1}=\pi_t(n+1)}
\\
&=\frac{w_{n+1}}{s_{n+1}}\frac{w_{n+1}}{s_{n+1}}
\bra{\prod_{i=2}^n \frac{w_{\pi_t(i)}}{s_{i-1}}}\frac{w_{\pi_t(n+1)}}{s_{n}}
\\&=\bra{\prod_{i=2}^{n+1} \frac{w_{\pi_t(i)}}{s_{i-1}}} \frac{w_{n+1}}{s_{n+1}}\frac{w_{n+1}}{s_{n+1}}.\qedhere
\end{align*}
\end{linenomath}
}
\end{proof}

\section{Central limit theorem, local limit theorem and recurrence}\label{sec:CLT+}

\subsection{Proof of Theorem~\ref{monkeyMarkovLimitTheorem}}\label{sub:CLT}
\begin{figure}
$\vcenter{\hbox{
\includegraphics[width=9cm]{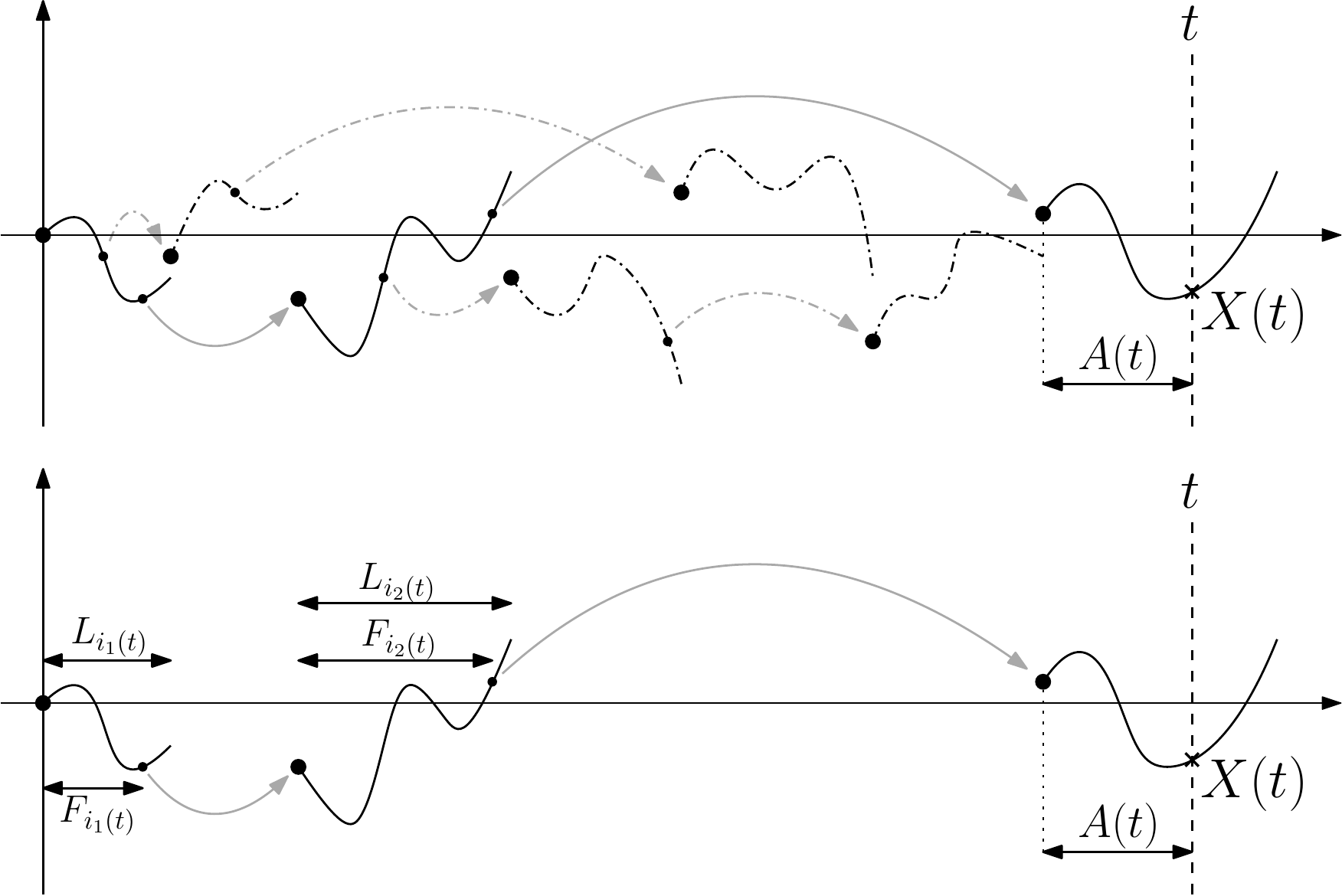}
}}$
\hspace{1cm}
$\vcenter{\hbox{\includegraphics[width=5.5cm,page=2]{tree_walk}}}$
\caption{A realization of the monkey Markov process and its branching structure (this picture illustrates notations and ideas used in the proof of Theorem~\ref{monkeyMarkovLimitTheorem}).
In this example, the number of runs before time $t-A(t)$ is $N(t) = 6$, 
the height of $\omega_{N(t)}$ is $K(t) = 2$.
The runs associated to a node on the branch from 
the root to $\omega_{N(t)}$ are in solid line on the top-left corner, 
the other runs are dotted.
}
\label{onlyUsefulPartOfTheTrajectoryPicture}
\end{figure}
We refer the reader to Figure~\ref{onlyUsefulPartOfTheTrajectoryPicture}
for {\cec the} notations and main ideas of the proof.
We denote by $A(t)$ the time to the last relocation \ger{before time}~$t$, 
\ger{by} $N(t)$ the number of runs before time $t-A(t)$, 
\ger{by }$\omega_{N(t)}$ the node of $\tau_{N(t)}$ defined as the parent of the node labelled by the run that straddles time $t$, 
and by $K(t)$ the height of $\omega_{N(t)}$.  
Finally, we denote by $L_{i_1(t)}, \ldots, L_{i_{K(t)}(t)}$ the run-lengths of the successive runs labelling the nodes of the branch from the root of $\tau_{N(t)}$ to $\omega_{N(t)}$ (in that order).
By construction of the model, and using the Markov property of~$Z$, we \ger{get the following equality in distribution}\footnote{\cec In the whole paper, we use $\overset{\sss d}{=}$ to denote equality in distribution.}:
\[X(t) \overset{\sss d}{=} Z\bigg(\sum_{\ell=1}^{K(t)} F_{i_\ell(t)} + A(t)\bigg),\]
where, for all integers $i$, $F_i$ is distributed as
\[\mathbb P_{\bs L}(F_i\leq x) 
= \frac{\int_{T_{i-1}}^{T_{i-1}+x} \mu }{\int_{T_{i-1}}^{T_i} \mu }.\]
It is important to note that the $F_{i_\ell(t)}$'s are no longer distributed according to this distribution since they have been ``size-biased'' by the fact that their definition involve conditioning on run number $i_\ell(t)$ to be on the branch leading to $X(t)$.
Let
\[S(t):=\sum_{\ell=1}^{K(t)} F_{i_\ell(t)} + A(t),\]
so that $X(t)\overset{\sss d}{=}  Z\big(S(t)\big)$.

The first step in the proof of Theorem~\ref{monkeyMarkovLimitTheorem} is to prove that
\begin{equation}\label{eq:clt_for_hatS}
\frac{S(t)-\kappa_2s(t)}{\sqrt{\kappa_3s(t)}}
\xrightarrow{\sss d} \mc N(0,1).
\end{equation}
Let us first prove that $A(t)/c_t\to 0$ in probability whenever $c_t\to \infty$. 
Indeed, if $L(t)=T_{N(t)+1}-T_{N(t)}$ is difference of the relocation times that straddle $t$, then $A(t)\leq L(t)$ and $L(t)$ converges in law to a finite random variable (see, e.g.~\cite{bertoin1999renewal}). 
Hence, by Slutsky's theorem,  $L(t)/c_t\to 0$ in probability and so does $A(t)/c_t$. 

Note that, given 
$\tau_{N(t)}$, 
the random node $\omega_{N(t)}$ is chosen at random 
in $\tau_{N(t)}$ with probability proportional to the weights.
Moreover, using the notations of Section~\ref{sec:wrrt}, 
we have that $S(t) = \Phi(\omega_{N(t)}) + A(t)$.
Note that $N(t)$ is $\bs L$-measurable, and that, $\bs L$-almost surely, $N(t)$ increases to $+\infty$. 
The convergence of the first marginal in Theorem~\ref{th:sums_along_branches} applies under the assumptions of Theorem~\ref{monkeyMarkovLimitTheorem}; in particular, we have assumed that $\delta\in(0,\nicefrac12]$ if $\mu=\mu_2$. Therefore, we get that, conditionally on~$\bs L$, 
\begin{equation}\label{eq:TclPhi}
\frac{\Phi\big(\omega_{N(t)}\big)-\hat\kappa_2s(N(t))}{\sqrt{\hat\kappa_3s(N(t))}}
\xrightarrow{\sss d} \mc N(0,1),
\end{equation}
when $t\to\infty$.
(Hence, the above limit theorem also holds unconditionally.) 
Recall that $S(t) = \Phi(\omega_{N(t)}) + A(t)$, 
and $A(t)/\sqrt{s(t)}\xrightarrow{\sss p} 0$;
thus, to prove Equation~\eqref{eq:clt_for_hatS}, 
it is enough to prove that, 
for all $i\in\{2,3\}$, 
\begin{equation}
\label{eq:changes(N(t))tos(t)}
\hat \kappa_i s(N(t))=\kappa_i s(t)+o\big(\sqrt{s(t)}\big). 
\end{equation}

Let us first assume that $\mu=\mu_1$; in that case, $\kappa_i=\hat\kappa_i$.
By definition, we have that
$\sum_{i=1}^{N(t)} L_i  + A(t) = t$, 
and by the strong law of large numbers,
$\sum_{i=1}^{N(t)} L_i /N(t)\to \mathbb EL$ almost surely when $t\to\infty$.
Therefore, we have $N(t) = t(1+o(1))/\mathbb EL$ almost surely when $t$ goes to infinity, implying that
$\log^\alpha N(t) = \log^\alpha t + \imf{\mathcal O}{1}$, which in turn implies Equation~\eqref{eq:changes(N(t))tos(t)}. 

If $\mu=\mu_2$, we need a more precise estimate for $N(t)$: since $\sum^n_{i=1} L_i-n\mathbb E L=\imf{o}{\sqrt{n\log n}}$ almost surely (for example by the law of the iterated logarithm), the bounds $t-A(t)\leq \sum_{i=1}^{N(t)} L_i\leq t$ imply that
\[
\frac{t-N(t)\mathbb E L}{\sqrt{N(t)\log N(t)}}\to 0
\]almost surely as $t\to\infty$ and thus
\[
N(t)=\frac{t}{\mathbb E L}+\imf{o}{\sqrt{t\log t}}. 
\]Hence we have\[
\hat\kappa_i s(N(t))
=\hat\kappa_iN(t)^\delta
=\hat\kappa_i\left( \frac{t}{\mathbb E L}\right)^{\!\!\delta}+ o\paren{\sqrt{\log(t)} t^{\delta-\nicefrac12}}. 
\]Since $t^{\delta-\nicefrac12}=o(t^{\nicefrac\delta 2})$ for all $\delta\in (0,1)$ and $\kappa_i=\hat \kappa_i/(\mathbb E L)^\delta$, we see that Equation~\eqref{eq:changes(N(t))tos(t)} holds. 

Thus, in both cases, using the fact that $S(t) = \Phi(\omega_{N(t)}) + A(t)$, and since $A(t)/\sqrt{s(t)}\to 0$ in probability, we deduce that Equation~\eqref{eq:clt_for_hatS} holds.

We now show that Theorem~\ref{monkeyMarkovLimitTheorem} follows from Equation~\eqref{eq:clt_for_hatS}.
By Skorokhod's representation theorem, 
we can find a probability space on which 
there exists a standard Gaussian random variable~$\Lambda$ 
as well as $\tilde S(t)$ such that $\tilde S(t) \overset{\sss d}{=} S(t)$ for all $t\geq 0$, such that
\begin{linenomath}
\[
\tilde S(t) = \kappa_2 s(t) + \Lambda \sqrt{\kappa_3 s(t)} + 
o\big(\sqrt{s(t)}\big), 
\]
almost surely when $t\to\infty$. 
\end{linenomath}Since, in our original probability space, $Z$ is independent of $S$, 
on an extension of the probability space where $(\tilde S(t))_{t\geq 0}$ is defined, 
there exists a sequence $(\tilde Z(t))_{t\geq 0}$ such that $\tilde Z$ is independent of $\tilde S$, 
$\tilde Z(t)\overset{\sss d}{=} Z(t)$ for all $t\geq 0$, and a random variable $\Gamma$ 
with law $\gamma$ such that $(\tilde Z(t)-b_t)/a_t\to \Gamma$ almost surely. 
Therefore, almost surely when $t\to\infty$, we have
\begin{linenomath}
\begin{align*}
\frac{\tilde Z(\tilde S(t)) - b_{\kappa_2 s(t)}}{a_{\kappa_2 s(t)}}
&= \frac{\tilde Z(\tilde S(t)) - b_{\tilde S(t)}}{a_{\tilde S(t)}}
\cdot
\frac{a_{\tilde S(t)}}{a_{\kappa_2 s(t)}}
+\frac{b_{\tilde S(t)} - b_{\kappa_2 s(t)}}{a_{\kappa_2 s(t)}}\\
& \to \imf{f}{\Lambda\sqrt{\nicefrac{\kappa_3}{\kappa_2}}}\Gamma +\imf{g}{\Lambda\sqrt{\nicefrac{\kappa_3}{\kappa_2}}}
\end{align*}
\end{linenomath}by the \defin{(Scaling)} hypothesis;
this concludes the proof since $\Omega:=\Lambda\sqrt{\nicefrac{\kappa_3}{\kappa_2}}\sim \mathcal N(0, 2\mathbb E L^3/(3\mathbb E L^2))$,
and since $\tilde Z(\tilde S(t)) \overset{\sss d}{=} Z\big(S(t)\big) \overset{\sss d}{=} X(t)$ for all $t\geq 0$.

\subsection{Proof of Theorem~\ref{th:LLT}}
Using the notations of Subsection~\ref{sub:CLT}, we have 
$X(t) \overset{\sss d}{=} Z(S(t))$.
Since $Z$ is independent of $S$, we get that, for all $m\in\mathbb Z^d$
\[\mathbb P(X(t) = m)
= \mathbb E\Big[\mathbb P\big(Z(S(t))=m\,|\,S(t)\big)\Big],\] 
implying that
\begin{linenomath}
\begin{align}
\label{eq:localLimitBound}
&a_{\kappa_2 s(t)}\left|\mathbb P(X(t)=m) - \frac{1}{a_{\kappa_2 s(t)}}\psi\Big(\frac{m - b_{\kappa_2s(t)}}{a_{\kappa_2s(t)}}\Big)\right|\nonumber \\
&\nonumber\leq
a_{\kappa_2 s(t)}\left|\mathbb E\bigg[\mathbb P\big(Z(S(t))=m\,|\,S(t)\big) 
- \frac{1}{a_{\kappa_2 s(t)}}\phi\Big(\frac{m - b_{S(t)}}{a_{S(t)}}\Big)\bigg]\right|
+ \left|\mathbb E\phi\Big(\frac{m - b_{S(t)}}{a_{S(t)}}\Big) 
- \psi\Big(\frac{m - b_{\kappa_2s(t)}}{a_{\kappa_2s(t)}}\Big)\right|\\
&\leq
a_{\kappa_2 s(t)}\mathbb E\left[\left|\mathbb P\big(Z(S(t))=m\,|\,S(t)\big) 
- \frac{1}{a_{\kappa_2 s(t)}}\phi\Big(\frac{m - b_{S(t)}}{a_{S(t)}}\Big)\right|\right]
+ \left|\mathbb E\phi\Big(\frac{m - b_{S(t)}}{a_{S(t)}}\Big) 
- \psi\Big(\frac{m - b_{\kappa_2s(t)}}{a_{\kappa_2s(t)}}\Big)\right|.
\end{align}
\end{linenomath}Since $Z$ is independent of $S$, 
and $S(t)\to\infty$ almost surely when $t\to\infty$, 
the fact that
\[\sup_{m\in\mathbb Z^d} a_t\left|\mathbb P(Z(t) = m) - \frac{1}{a_t}\phi\Big(\frac{m - b_t}{a_t}\Big)\right|\to 0\]
implies that, almost surely when $t\to\infty$,
\[ 
\imf{\Delta}{t}=
\sup_{m\in\mathbb Z^d} a_{S(t)}\left|\mathbb P(Z(S(t)) = m \,|\,S(t)) 
-\frac{1}{a_{S(t)}}\phi\Big(\frac{m - b_{S(t)}}{a_{S(t)}}\Big)\right|\to 0.\]
We can therefore bound the first {\cec term} in the right-hand side of \eqref{eq:localLimitBound} as follows: for all $m\in \z^d$
\begin{linenomath}
\begin{align*}
&a_{\kappa_2 s(t)}\mathbb E\left|\mathbb P\big(Z(S(t))=m\,|\,S(t)\big) 
- \frac{1}{a_{\kappa_2 s(t)}}\phi\Big(\frac{m - b_{S(t)}}{a_{S(t)}}\Big)\right|
\\
&\leq a_{\kappa_2 s(t)}\mathbb E\left|\mathbb P\big(Z(S(t))=m\,|\,S(t)\big) 
- \frac{1}{a_{S(t)}}\phi\Big(\frac{m - b_{S(t)}}{a_{S(t)}}\Big)\right|
+a_{\kappa_2 s(t)}\abs{\frac{1}{a_{S(t)}}-\frac{1}{a_{\kappa_2s(t)}}}
\phi\Big(\frac{m - b_{S(t)}}{a_{S(t)}}\Big)
\\
&\leq \frac{a_{S(t)}}{a_{\kappa_2 s(t)}} \imf{\Delta}{t}+ \abs{\frac{a_{\kappa_2 s(t)}}{a_{S(t)}}-1}
\|\phi\|_\infty.
\end{align*} 
\end{linenomath}
(Recall that $\phi$ is bounded by assumption.) 
Since $\Psi(t)\to 0$ as $t\to\infty$ and, since $f=1$ by assumption, we see that $a_{\kappa_2 s(t)}/a_{S(t)}\to 1$ by the \defin{(Scaling)} assumption. 
Therefore
\begin{equation}
\label{eq:secondLocalLimitBound}
\sup_{m\in\z}a_{\kappa_2 s(t)}\mathbb E\bigg[\left|\mathbb P\big(Z(S(t))=m\,|\,S(t)\big) 
- \frac{1}{a_{\kappa_2 s(t)}}\phi\Big(\frac{m - b_{S(t)}}{a_{S(t)}}\Big)\right|\bigg]\to 0
\end{equation}as $t\to\infty$. 

We now consider the second term in the right-hand side of Inequality~\eqref{eq:localLimitBound}. 
{\cec Using Skorokhod's representation theorem (similarly to the way it was used in the proof of Theorem~\ref{monkeyMarkovLimitTheorem}), we find a probability space on which we can define $\tilde S(t)$ for all $t\geq 0$ such that,} for all~$t\geq 0$, 
$S(t)\overset{\sss d}{=} \tilde S(t)$ and, almost surely when $t\to\infty$,
\[\tilde S(t) = \kappa_2s(t) + \Lambda\sqrt{\kappa_3s(t)} + o(\sqrt{s(t)}).\]
We apply the {\bf (Scaling)} assumption, which gives
\begin{linenomath}
\begin{align*}
\mathbb E\phi\Big(\frac{m - b_{S(t)}}{a_{S(t)}}\Big) 
= \mathbb E\phi\Big(\frac{m - b_{\tilde S(t)}}{a_{\tilde S(t)}}\Big) 
=\mathbb E\phi\Big(\frac{m - b_{\kappa_2s(t)}}{a_{\kappa_2s(t)}} +\varepsilon_t - g(\Omega)\Big) ,
\end{align*}\end{linenomath}where $\varepsilon_t$ is $S(t)$-measurable
and tends to zero almost surely when $t\to\infty$, and $\Omega = \Lambda\sqrt{\nicefrac{\kappa_3}{\kappa_2}}$.
Note that, by definition,
\[\psi(x) 
= \frac{\mathrm d}{\mathrm d x}\mathbb P\big( \Gamma + g(\Omega)\leq x\big)
= \frac{\mathrm d}{\mathrm d x}
\mathbb P\big(\Gamma \leq x-g(\Omega) \big)
=\mathbb E\phi\big( x-g(\Omega) \big).\]
Hence, we get
\[\mathbb E\phi\Big(\frac{m - b_{S(t)}}{a_{S(t)}}\Big) 
= \mathbb E\psi\Big(\frac{m - b_{\kappa_2s(t)}}{a_{\kappa_2s(t)}} +\varepsilon'_t\Big),
\]
where $\varepsilon'_t$ is $S(t)$-measurable
and tends to zero almost surely when $t\to\infty$.
Note that, since $\phi$ is bounded and Lipschitz, then so is $\psi$;
we denote by $\|\psi\|_\infty$ its bound and by $\vartheta$ its Lipschitz constant, and get
\begin{linenomath}
\begin{align*}
\sup_{m\in\mathbb Z^d}\left|\mathbb E\phi\Big(\frac{m - b_{S(t)}}{a_{S(t)}}\Big) 
- \psi\Big(\frac{m - b_{\kappa_2s(t)}}{a_{\kappa_2s(t)}}\Big)\right|
&\leq \sup_{m\in\mathbb Z^d}\left|\mathbb E\psi\Big(\frac{m - b_{\kappa_2s(t)}}{a_{\kappa_2s(t)}} +\varepsilon'_t\Big)
- \psi\Big(\frac{m - b_{\kappa_2s(t)}}{a_{\kappa_2s(t)}}\Big)\right|\\
&\leq \sup_{m\in\mathbb Z^d}2\vartheta \mathbb E[|\varepsilon'_t|\wedge \|\psi\|_\infty]
\to 0,
\end{align*}\end{linenomath}by the dominated convergence theorem.
Therefore, the last convergence to zero together with Equations~\eqref{eq:localLimitBound} and~\eqref{eq:secondLocalLimitBound} implies the claim.

\subsection{Proof of Theorem~\ref{th:rec}}
{\it (a)} Let us first assume that $Z$ is the lazy simple random walk on~$\mathbb Z^d$ started at $0$, 
meaning that, at every time-step it stays at the state it is occupying with constant and positive probability. 
It is standard to consider the lazy version of the simple random walk in order to avoid parity problems. 
It is known (see, e.g.\ \cite{Polya} for the simple symmetric case) that, for all $t\in\{1, 2, \ldots\}$, there exists $c>0$ such that
\[\mathbb P(Z(t) = 0) \sim \frac c{t^{\nicefrac{d}{2}}} \text{ when }t\to\infty.\]
Recall that, $X(t) \overset{\sss d}{=} Z(S(t))$ for all $t\geq 1$,
and that
\begin{equation}\label{eq:LLN_S}
\frac{S(t)}{\hat\kappa_2s(t)} 
\to 1,
\end{equation}
almost surely when $t\to\infty$  (see~Theorem~\ref{th:sums_along_branches}).
We use L\'evy's version of the Borel-Cantelli lemma 
(see, e.g.,~\cite[Ch.12, Sec.5]{MR1155402} or \cite[Corollary VII-2-6]{MR0402915}):
Consider $\F^S_t=\sag{S_u: u\leq t}$;
since $Z$ and $S$ are independent, we get that, for all $t\geq 1$,
\[
\probac{X(t)=0}{\F^S_t}
= \probac{Z(S(t)) = 0}{\F^S_t}
= \frac{c+o(1)}{S(t)^{\nicefrac{d}{2}}}
= \frac{c+o(1)}{(\hat\kappa_2s(t))^{\nicefrac{d}{2}}},
\]almost surely as $t\to\infty$, where we have used Equation~\eqref{eq:LLN_S}. 

Note that $\sum_t s(t)^{-d/2}=\infty$ when 
$\mu=\mu_1$ or when $\mu=\mu_2$ and $\delta d\leq 2$. 
By the cited extension of Borel-Cantelli, 
we get that $X(t)=0$ infinitely often almost surely. 
By the same result, if $\mu=\mu_2$ and $\delta d>2$ then $X(t)\neq 0$ from a given (random) index onwards, implying that the monkey walk is not recurrent. 

The above argument can be generalized to conclude that, 
for any fixed $\eta>0$, 
the set  $\set{t\geq 0: \|X(t)\|\leq \eta}$ is almost surely finite 
when $\mu=\mu_2$ and $\delta d>2$. 
This holds because the local limit theorem for lazy random walks 
implies the existence of a constant $c>0$ such that, for all $\|x\|\leq \eta$,
\[\mathbb P(Z(t) = x) 
\leq  \frac c{t^{\nicefrac{d}{2}}}.\]
Hence, for all $\eta>0$, we have
\[
\probac{\|X(t)\|\leq \eta}{\F^S_t}
\leq  \frac{c+o(1)}{(\hat\kappa_2s(t))^{\nicefrac{d}{2}}}, \text{ when }t\to\infty,
\]
implying that $\|X(t)\|\to \infty$ almost surely as $t\to\infty$. 
\bigskip

{\it (b)} Let us now treat the case when $Z = (Z(t))_{t\in[0,\infty)}$ is the Brownian motion on $\mathbb R^d$. 
Fix $\eta >0$. 
We know that, for an appropriate $c>0$,  $\proba{\|Z(t)\|\leq \eta}=ct^{-d/2}$. 
We can then reproduce the Borel-Cantelli argument (using times $t=1,2,\ldots$) to show that $\mathcal Z$ is unbounded  when $\mu=\mu_1$ or when $\mu=\mu_2$ and $\delta d\leq 2$. 

%

\section{Proof of Theorem~\ref{th:cv_occupation}}\label{sec:occupation_measure}
%
We reason conditionally on $\bs L$ (and thus $\bs W$) and $\bs F$. 
Let us denote by $\varpi_i$ the weighted occupation measure of the $i$-th run: in other words, for all $i\geq 1$, for all Borel set $\mathcal B\subseteq \mathbb R^d$, we let
\[\varpi_i(\mathcal B) 
= \frac{1}{{\bl W_i}}\int_{T_{i-1}}^{T_i} {\bl\mu(s)}\boldsymbol 1_{X(s)\in \mathcal B} \,\mathtt ds,\]
where we recall that the sequence $(T_i)_{i\geq 1}$ with $T_0 = 0$ is the sequence of relocation times: $T_n = L_1+\cdots + L_n$ for all $n\geq 1$.
Recall that we denote by~$A(t)$ the time from $t$ back to the last relocation time, and by $N(t)$ the number of runs before time~$t-A(t)$.
With these notations, we have
\begin{lesn}
\pi_t=\frac{1}{\bl\bar\mu(t)}\bra{\sum_{i=1}^{N(t)}\varpi_i + R_t} 
\end{lesn}where {\bl$\bar\mu(t) = \int_0^t \mu$} and
\begin{lesn}
R_t(\mathcal B)=\int_{t-A(t)}^t{\mu(s)}\mathbf{1}_{X_s\in \mathcal B}\, ds, \text{ for all Borel set }\mathcal B\in\mathbb R^d. 
\end{lesn}
{\bl$\bullet$ Let us first prove that \[R_t(\re^d)={\int_{t-A(t)}^t \mu}=\bar\mu(t)-\bar\mu(t-A(t))=o(\bar\mu(t)),\]when $t\to\infty$; since we have already argued that $A(t)$ is negligible in front of any increasing, diverging function of $t$, it only remains to check that our memory kernels do not increase too fast.
First assume that $\mu=\mu_1$; we have 
\[\bar\mu(t)-\bar\mu(t-A(t))
= \frac1\beta \left(\mathrm e^{\beta\log^\alpha t} - \mathrm e^{\beta\log^\alpha(t-A(t))}\right)
=\frac{\mathrm e^{\beta\log^\alpha t} }\beta \left(1-\mathrm e^{\beta[\log^\alpha(t-A(t))-\log^\alpha t]}\right).\]
Note that
\begin{align*}
\log^\alpha(t-A(t))-\log^\alpha t
&= \big(\log^\alpha t\big) \bigg(\bigg(\frac{\log t + \log \big(1-\frac{A(t)}{t}\big)}{\log t}\bigg)^{\!\!\alpha}-1\bigg)\\
&= -(1+o(1))\,\frac{\alpha A(t) \log^\alpha t}{t\log t}
= -(\alpha+o(1))\,\frac{A(t) \log^{\alpha-1} t}{t},
\end{align*}
in probability when $t\to\infty$, since $A(t)/t \xrightarrow{\sss p} 0$.
Therefore, we get
\[\bar\mu(t)-\bar\mu(t-A(t)) = 
\frac{\alpha \mathrm e^{\beta\log^\alpha t}  \log^{\alpha-1} t}{t}
= \alpha\beta\bar\mu(t) \,\frac{A(t)\log^{\alpha-1} t}{t}
= o(\bar\mu(t)),
\]
in probability since $A(t)/c_t\xrightarrow{\sss p} 0$ as soon as $c_t\to+\infty$.
The case $\mu=\mu_2$ can be treated similarly and
we get that, in all cases, $\bar\mu(t-A(t))/\bar\mu(t)\xrightarrow{\sss p} 1$.} 

$\bullet$ It is therefore enough to prove that
\begin{equation}\label{eq:enough}
\frac{1}{\bl\bar\mu(t-A(t))}\sum_{i=1}^{N(t)}\imf{\varpi_i}{a_{\kappa_2s(t)}\cdot +b_{\kappa_2s(t)}}
\xrightarrow{\sss p} \pi_\infty, \text{ weakly}.
\end{equation}
Note that $\bar\mu(t-A(t))=\sum_{i=1}^{N(t)}{W_i}$, and, by definition of the model, conditionally on $(X(s), s\leq t-A(t))$ and $(\tau_{N(t)}, \ell_{N(t)})$, $\frac{1}{\bl\bar\mu(t-A(t))}\sum_{i=1}^{N(t)}\varpi_i$ is the distribution of the relocation position at time $t-A(t)$, which we denote by $V_t$.

By~\cite[Lemma~3.1]{MR3629870}, 
to prove Equation~\eqref{eq:enough}, it is enough to show that, when $t\to\infty$,
\begin{equation}\label{eq:variance-like-cv}
\left(\frac{V_t^{\sss (1)}- b_{\kappa_2s(t)}}{a_{\kappa_2 s(t)}}, 
\frac{V_t^{\sss (2)}- b_{\kappa_2s(t)}}{a_{\kappa_2 s(t)}}\right)
\xrightarrow{\sss d} (\Xi_1, \Xi_2),
\end{equation}
where $V_t^{\sss (1)}$ and $V_t^{\sss (2)}$ are two independent copies of $V_t$, 
and $\Xi_1$ and $\Xi_2$ two independent random variables 
distributed according to~$\pi_{\infty}$.

\begin{figure}
\begin{center}
\includegraphics[width=.8\textwidth]{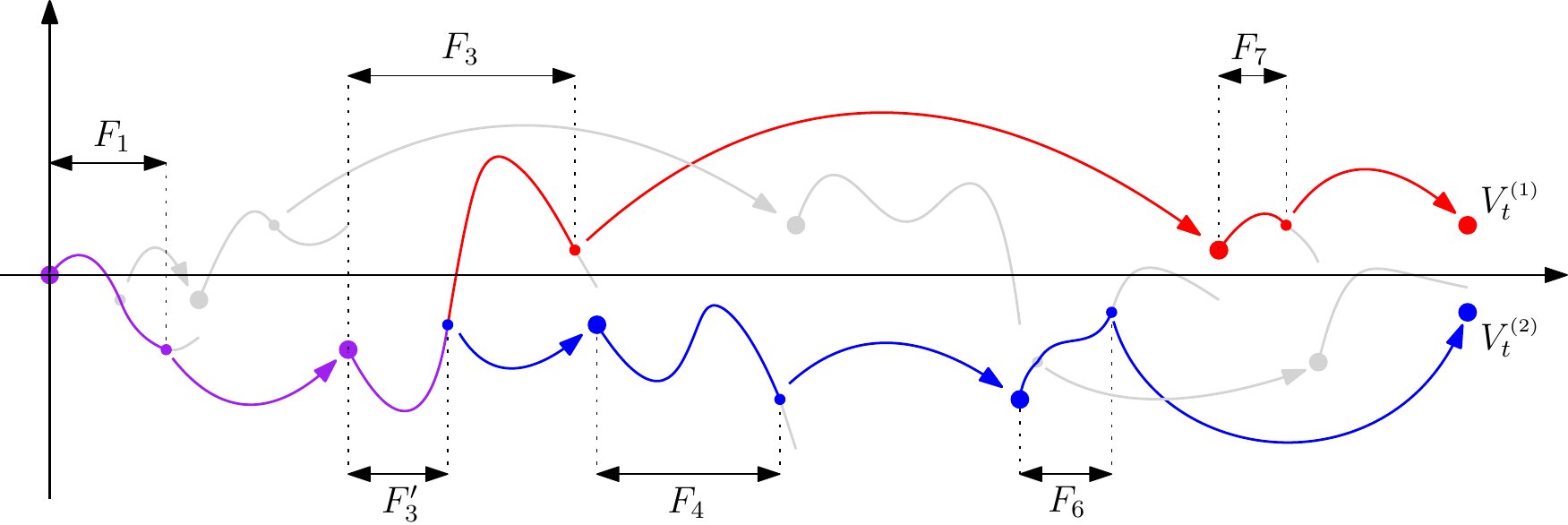}
\end{center}
\caption{This picture illustrates the reasoning of the proof of Theorem~\ref{th:cv_occupation}. In this particular example, the two trajectories leading to $V_t^{\sss (1)}$ and $V_t^{\sss (2)}$ are colored in red and blue respectively, and in purple when they coincide. Note that, in this example, they coincide for an amount of time equal to $\Delta(t)=F_1+F'_3$; when they split, they are both equal to $Z(F_1+F'_3)$, and then become independent. Starting from there, the red trajectory evolves for an amount of time equal to $\Delta^{\sss (1)}(t) = F_3-F'_3+F_7$, and the blue trajectory for an amount of time equal to $\Delta^{\sss (2)}(t) = F_4+F_6$.
Note that, using the notations of the proof of Theorem~\ref{th:cv_occupation}, $k(t)=3$ (the number corresponding to the run during which the red and blue trajectories split).}
\label{fig:proof_th2}
\end{figure}

We refer the reader to Figure~\ref{fig:proof_th2} for a visual aid to the rest of the proof and notations.
Let us denote by $\omega_t$ and $\omega'_t$ taken independently and weight-proportionally at random in $\tau_{N(t)}$;
let $\kappa_t$ be the parent of $\omega_t \wedge \omega'_t$ 
and $k(t)$ the index of node $\omega_t \wedge \omega'_t$ in $\tau_n$ (i.e.\ $\omega_t \wedge \omega'_t = \nu_{k(t)}$);
we have
\[
\big(V_t^{\sss (1)},V_t^{\sss (2)}\big)
= \Big(Z^{\sss (1)}_{Z(\Delta(t))}\big(\Delta_1(t)\big),
Z^{\sss (2)}_{Z(\Delta^{\sss (1)}(t))}\big(\Delta^{\sss (2)}(t)\big)\Big),
\]
where 
$Z^{\sss (1)}_{Z(\Delta(t))}$ and $Z^{\sss (2)}_{Z(\Delta(t))}$ are two independent random Markov processes of semi-group $P$ started at $Z(\Delta(t))$, and
\begin{align*}
\Delta(t) &= \sum_{i=1}^n F_i \bs 1_{\nu_i\preccurlyeq \kappa_t} 
+ \min\{F_{k(t)},F_{k(t)}'\}
=\Phi(\kappa_t) + \min\{F_{k(t)},F_{k(t)}'\},\\
\Delta^{\sss (1)}(t) &= (F_{k(t)} - \min\{F_{k(t)},F_{k(t)}'\}) +
\sum_{i=1}^n F_i \bs 1_{\omega_t\wedge \omega'_t\prec \nu_i\preccurlyeq \omega_t}
=\Phi(\omega_t) - \Delta(t)\\
\Delta^{\sss (2)}(t) &=  (F_{k(t)}' - \min\{F_{k(t)},F_{k(t)}'\}) +
\sum_{i=1}^n F_i \bs 1_{\omega_t\wedge \omega'_t\prec \nu_i\preccurlyeq \omega_t}
=F_{k(t)}'-F_{k(t)} + \Phi(\omega'_t) - \Delta(t),
\end{align*}
where $\bs F'=(F'_i)_{i\geq 1}$ is an independent copy of $\bs F=(F_i)_{i\geq 1}$.
We reason conditionally on ($\bs F$ and)~$\bs {F'}$, and
let
\[\mathcal X = \bigg\{\sum_{i\in I} F_i + \min\{F_k,F'_k\}\colon I \text{ is a finite subset of }\mathbb N, k\in\mathbb N\bigg\},\]
{\cec being the set of all possible values for $\Delta(t)$;}
we have, for all $w_1, w_2\in\mathbb R$,
\begin{linenomath}
\begin{align*}
&\mathbb P\left(
\frac{V_t^{\sss (1)} - b_{\kappa_2s(t)}}{a_{\kappa_2s(t)}}\geq w_1,
\frac{V_t^{\sss (2)} - b_{\kappa_2s(t)}}{a_{\kappa_2s(t)}}\geq w_2
\right)\\
&= \sum_{x\in\mathcal X}
\mathbb P\left(
\frac{Z^{\sss (1)}_{Z(x)}(\Delta^{\sss (1)}_x(t)) - b_{\kappa_2s(t)}}{a_{\kappa_2s(t)}}\geq w_1,
\frac{Z^{\sss (2)}_{Z(x)}(\Delta^{\sss (2)}_x(t)) - b_{\kappa_2s(t)}}{a_{\kappa_2s(t)}}\geq w_2,
\Delta(t) = x
\right).
\end{align*}\end{linenomath}where 
$\Delta^{\sss (1)}_x(t)= \Phi(\kappa_t) - x$, and 
$\Delta_x^{\sss (2)}(t)=\Phi(\kappa_t) - x + F'_{k(t)} - F_{k(t)}$.
We have\begin{linenomath}
\begin{align}
\mathbb P\left(
\frac{V_t^{\sss (1)} - b_{\kappa_2s(t)}}{a_{\kappa_2s(t)}}\geq w_1,
\frac{V_t^{\sss (2)} - b_{\kappa_2s(t)}}{a_{\kappa_2s(t)}}\geq w_2
\right)
\hspace{6cm}&\notag
\\
= \sum_{x\in\mathcal X}
\mathbb P\bigg(
\frac{Z^{\sss (1)}_{Z(x)}(\Delta^{\sss (1)}_x(t))  - b_{\Delta^{\sss (1)}_x(t)}}{a_{\Delta^{\sss (1)}_x(t)}}\cdot\frac{a_{\Delta^{\sss (1)}_x(t)}}{a_{\kappa_2 s(t)}} 
+ \frac{b_{\Delta^{\sss (1)}_x(t)}-b_{\kappa_2 s(t)}}{a_{\kappa_2 s(t)}}\geq w_1,&\notag\\
\frac{Z^{\sss (2)}_{Z(x)}(\Delta^{\sss (2)}_x(t))  - b_{\Delta^{\sss (2)}_x(t)}}{a_{\Delta^{\sss (2)}_x(t)}}\cdot\frac{a_{\Delta^{\sss (2)}_x(t)}}{a_{\kappa_2 s(t)}} 
+ \frac{b_{\Delta^{\sss (2)}_x(t)}-b_{\kappa_2 s(t)}}{a_{\kappa_2 s(t)}}\geq w_2,&
~\Delta(t) = x
\bigg);\label{eq:truc}
\end{align}\end{linenomath}call the summands in the right-hand side $\imf{p_t}{w_1,w_2,x}$. 
We now prove that for all $w_1,w_2\geq 0$ and~$x\in\mathcal X$, 
\begin{linenomath}
\begin{align}
\label{eq:ptdef}
\imf{p_t}{w_1,w_2,x}
\to& \mathbb P\Big(f(\Omega_1)\Gamma_1 + g(\Omega_1)\geq w_1,
f(\Omega_2)\Gamma_2 + g(\Omega_2)\geq w_2,
\Theta = x\Big)\\
&=: \imf{p_\infty}{w_1,w_2, x}, \nonumber
\end{align} 
\end{linenomath}where $\Omega_1$ and $\Omega_2$ are two independent variables of distribution $\mathcal N(0,\nicefrac{\kappa_3}{\kappa_2})$.
Almost surely when $t\to\infty$, for all $x\in\mathcal X$,  
$\Delta^{\sss (1)}_x(t)\to\infty$, and $\Delta^{\sss (2)}_x(t)\to\infty$;
this implies that, when~$t\to\infty$,
\begin{equation}\label{eq:cv_Zs}
\left(\frac{Z_{Z(x)}^{\sss (1)}(\Delta^{\sss (1)}_x(t)) - b_{\Delta^{\sss (1)}_x(t)}}{a_{\Delta^{\sss (1)}_x(t)}},
\frac{Z_{Z(x)}^{\sss (2)}(\Delta^{\sss (2)}_x(t)) - b_{\Delta^{\sss (2)}_x(t)}}{a_{\Delta^{\sss (2)}_x(t)}}\right)
\xrightarrow{\sss d} (\Gamma_1, \Gamma_2),
\end{equation}
where
$\Gamma_1$ and $\Gamma_2$ are two independent random variables of distribution~$\gamma$.
By ergodicity of the Markov process of semi-group $\mathcal P$,
$\Gamma_1$ and $\Gamma_2$ are independent of the starting point~$Z(x)$ of~$Z_{Z(x)}^{\sss (1)}$ and~$Z_{Z(x)}^{\sss (2)}$.
Using Theorem~\ref{th:sums_along_branches} (we have indeed assumed that $\delta\in(0,\nicefrac12)$ if $\mu=\mu_2$), we get that, conditionally on $\bs L$ and $\bs F$, 
\begin{equation}\label{eq:sum_along_branches}
\left(\frac{\Phi(\omega_t)- \hat \kappa_2 s(N(t))}{\sqrt{\hat \kappa_3s( N(t))}},
\frac{\Phi(\omega'_t) - \hat\kappa_2s(N(t)) }{\sqrt{\hat\kappa_3s( N(t))}}
\right)
\xrightarrow{\sss d} (\Lambda_1, \Lambda_2),
\end{equation}
where $\Lambda_1$ and $\Lambda_2$ are 
two independent standard Gaussian random variables.
By Slutsky's lemma and Equation~\eqref{eq:changes(N(t))tos(t)}, 
Equation~\eqref{eq:sum_along_branches} becomes:  
conditionally on $\bs L$ and $\bs F$, 
we have
\begin{equation}\label{eq:cv_Phis}
\left(\frac{\Phi(\omega_t) - \kappa_2 s(t)}{\sqrt{\kappa_3 s(t)}}, 
\frac{\Phi(\omega'_t) - \kappa_2 s(t)}{\sqrt{\kappa_3  s(t)}}\right)
\xrightarrow{\sss d} (\Lambda_1, \Lambda_2),
\end{equation}
when $t\to\infty$,
where $\Lambda_1$ and $\Lambda_2$ are independent. 
Using Proposition~\ref{prop:2Dobrow} and Borel-Cantelli's lemma, we get that~$k(t)$ converges in distribution to an almost surely finite integer $K=\max\{i\geq 1\colon B_i=B'_i=1\}$.
Thus, conditionally on $\bs F$ and $\bs F'$, 
we have that~$(F'_{k(t)}-F_{k(t)})/\sqrt{s(t)}\to 0$ in probability.
Therefore, by Equation~\eqref{eq:cv_Phis} and Slutzky's lemma, we get that, for all $x\in\mathcal X$,
\begin{equation}\label{eq:cv_Deltas}
\left(\frac{\Delta_x^{\sss (1)}(t) - \kappa_2 s(t)}{\sqrt{\kappa_3 s(t)}}, 
\frac{\Delta_x^{\sss (2)}(t) - \kappa_2 s(t)}{\sqrt{\kappa_3  s(t)}}\right)
\xrightarrow{\sss d} (\Lambda_1, \Lambda_2),
\end{equation}

Therefore, Assumption~{\bf (Scaling)},~Equations~\eqref{eq:cv_Zs} and~\eqref{eq:cv_Deltas} imply Equation~\eqref{eq:ptdef}. 
We now aim to apply the general Lebesgue dominated convergence theorem (see, e.g.\ \cite{MR1013117}): 
First, we see that $p_t(w_1,w_2,x)\leq p_t(0,0,x)$ for all $w_1,w_2\geq 0$. 
From Equation~\eqref{eq:truc}, we get that $1=\sum p_t(0,0,x)$.
The forthcoming Lemma~\ref{lem:LCA} implies that $\Phi(\omega_t\wedge \omega'_t)$ converges in distribution to $\Theta$, an almost surely finite random variable.
Thus, we have
\[\Delta(t) = \phi(\omega_t\wedge \omega'_t) +\min\{F_{k(t)},F'_{k(t)}\} - F_{k(t)}
\xrightarrow{\sss d} \Theta +\min\{F_{K},F'_{K}\} - F_{K} =\hat\Theta \in\mathcal X,\]
implying that $1=\sum p_\infty(0,0,x)$.  
Therefore, by dominated convergence, and since $(\Omega_1, \Omega_2)$ is equal in distribution to $\big(\Lambda_1\sqrt{\nicefrac{\kappa_3}{\kappa_2}},\Lambda_2\sqrt{\nicefrac{\kappa_3}{\kappa_2}}\big)$, we get
\begin{linenomath}
\begin{align}
&\mathbb P\left(
\frac{V_t^{\sss (1)} - b_{\kappa_2 s(t)}}{a_{\kappa_2 s(t)}}\geq w_1,
\frac{V_t^{\sss (2)} - b_{\kappa_2 s(t)}}{a_{\kappa_2 s(t)}}\geq w_2
\right)\notag\\
&\hspace{1.5cm}\to \sum_{x\in\mathcal X}
\mathbb P\Big(f(\Omega_1)\Gamma_1 + g(\Omega_1)\geq w_1,
f(\Omega_2)\Gamma_2 + g(\Omega_2)\geq w_2,
\hat\Theta = x\Big)\notag\\
&\hspace{2cm}= \mathbb P\big(f(\Omega_1)\Gamma_1 + g(\Omega_2)\geq w_1,
f(\Omega_2)\Gamma_2 + g(\Omega_2)\geq w_2\big),\label{eq:cv_occupation}
\end{align}\end{linenomath}where we have used again that $\hat\Theta\in\mathcal X$. 
We have thus proved Equation~\eqref{eq:variance-like-cv}, which implies the claim.

\section{Proof of Theorem~\ref{th:sums_along_branches}}\label{sec:proof_prop}
\subsection{Main steps of the proofs and heuristics}\label{sub:plan_proof}
{\cec Recall that 
\[\Phi(u_n) = \sum_{\nu_i\preccurlyeq u_n} F_i = \sum_{i=1}^n F_i\bs 1_{\nu_i\preccurlyeq u_n},\]
where the $F_i$'s are defined in Equation~\eqref{eq:distr_F}, and $u_n$ is a node taken weight-proportionally at random in the $n$-node {\sc wrrt}.}
To prove Theorem~\ref{th:sums_along_branches}, 
we proceed along the following steps (where the indices $\bs L, \bs F$ indicate that the expectation is taken conditionally on the sequences $\bs L$ and $\bs F$):
\begin{enumerate}[(a)]
\item We first prove that, conditionally on $\bs L$ and $\bs F$,
\begin{equation}\label{eq:CLT}
\frac{\Phi(u_n) - \mathbb E_{\bs L, \bs F}\Phi(u_n)}{\sqrt{\mathtt{Var}_{\bs L, \bs F}\Phi(u_n)}}
\xrightarrow{\sss d} \mathcal N(0,1);
\end{equation}
this is done by applying Lindeberg's central limit theorem. {\cec The rest of the proof aims at finding almost sure estimates for the random variables $\mathbb E_{\bs L, \bs F}\Phi(u_n)$ and $\mathtt{Var}_{\bs L, \bs F}\Phi(u_n)$.}
\item {\cec Using Proposition~\ref{prop:Dobrow}, it is straightforward that
\[\mathbb E_{\bs L,\bs F}\Phi(u_n)
= \sum_{i=1}^n \frac{F_i W_i}{S_i}
\quad\text{ and }\quad
\mathtt{Var}_{\bs L,\bs F}\Phi(u_n) = \sum_{i=1}^n  \frac{F_i W_i}{S_i}\bigg(1-\frac{W_i}{S_i}\bigg),\]
where $S_i = \sum_{j=1}^i W_i = \bar\mu(T_i)$ (recall that $\bar \mu(x)=\int_0^x \mu$, for all $x\geq 0$).}
\item {\cec Using martingale arguments, we prove that $\mathbb E_{\bs L,\bs F}\Phi(u_n)$ and $\mathtt{Var}_{\bs L,\bs F}\Phi(u_n)$ are asymptotically equivalent to their respective expectations (conditionally on $\bs L$), which are given by
\[\sum_{i=1}^n \frac{W_i \mathbb E_{\bs L} [F_i]}{S_i}
\quad\text{ and }\quad
\sum_{i=1}^n \frac{W_i \mathbb E_{\bs L} [F_i]}{S_i}\bigg(1-\frac{W_i}{S_i}\bigg),\]
respectively. This is done in Lemma~\ref{lem:sumsF_iW_i}.}
\item The expectation of $F_i$ given $\bs L$ is calculated in Lemma~\ref{lem:exp_Fi}, and the sums of (b) are estimated in Lemma~\ref{lem:sums}, {\cec where we use that, by the law of large numbers, $T_i \sim i\mathbb E L$, implying that $S_i \sim \bar\mu(i\mathbb EL)$.} We then infer that, $(\bs L,\bs F)$-almost surely when $n\to\infty$,
\[\mathbb E_{\bs L,\bs F}\Phi(u_n) = \hat\kappa_2 s(n) + o(\sqrt{s(n)})
\quad\text{ and }\quad
\mathtt{Var}_{\bs L,\bs F}\Phi(u_n) = \hat\kappa_3s(n) + o(s(n)),\]
where $s(n)$ is defined in Equation~\eqref{eq:def_s}, and $\hat \kappa_i$ in Theorem~\ref{th:sums_along_branches}.
\end{enumerate}
These four steps lead to the following result:
\begin{proposition}\label{prop:marginals}
Under the assumptions of Theorem~\ref{th:sums_along_branches},
conditionally on $(\bs L, \bs F)$, $(\bs L, \bs F)$-almost surely
\[\frac{\Phi(u_n) - \hat\kappa_2s(n)}{\sqrt{\hat\kappa_3 s(n)}}
\xrightarrow{\sss d} \mathcal N(0,1),\]
when $n\to\infty$.
\end{proposition}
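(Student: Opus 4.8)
The plan is to realise $\Phi(u_n)$, conditionally on the environment $(\bs L,\bs F)$, as a sum of \emph{independent} (but not identically distributed) random variables, and then to carry out the four-step scheme sketched in Section~\ref{sub:plan_proof}.

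First I would reduce to a sum of independent terms. By Corollary~\ref{cor:bernoullis}, conditionally on $\bs L$ (equivalently on the weights $\bs W$) the indicators $(\bs 1_{\nu_i\preccurlyeq u_n})_{1\le i\le n}$ are independent Bernoulli random variables of parameters $W_i/S_i$, where $S_i=\sum_{j\le i}W_j=\bar\mu(T_i)$; as these indicators depend only on the tree, they are also independent of $\bs F$. Hence, conditionally on $(\bs L,\bs F)$, $\Phi(u_n)=\sum_{i=1}^n F_i\,\bs 1_{\nu_i\preccurlyeq u_n}$ is a sum of independent terms, with $\mathbb E_{\bs L,\bs F}\Phi(u_n)=\sum_{i\le n}F_iW_i/S_i$ and conditional variance of the analogous form (step (b)). To obtain~\eqref{eq:CLT} I would verify Lyapunov's condition, bounding the centred $(2+\eps)$-th conditional moments by $\sum_i F_i^{2+\eps}W_i/S_i$ and showing, via $T_i\sim i\,\mathbb E L$ and the closed forms of $\bar\mu_1,\bar\mu_2$, that $(\bs L,\bs F)$-almost surely this is negligible against $(\mathtt{Var}_{\bs L,\bs F}\Phi(u_n))^{1+\eps/2}$. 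It is this moment bookkeeping (with $\eps=2$), together with the polynomial growth $s(n)=\gamma n^\delta$ in the $\mu_2$ case, that forces the assumption $\mathbb E L^8<\infty$.

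Next I would remove the dependence of the centring and the normalisation on $\bs F$. Conditionally on $\bs L$, both $\mathbb E_{\bs L,\bs F}\Phi(u_n)$ and $\mathtt{Var}_{\bs L,\bs F}\Phi(u_n)$ become, after subtracting their $\bs L$-conditional expectations, martingales in $n$ with respect to $\sigma(F_1,\dots,F_n)$; for instance $M_n=\sum_{i\le n}(F_i-\mathbb E_{\bs L}F_i)W_i/S_i$ has predictable quadratic variation $\sum_{i\le n}\mathtt{Var}_{\bs L}(F_i)(W_i/S_i)^2$. Since $F_i\in[0,L_i)$ and $W_i/S_i$ is of the order of $L_i\,\mu(T_i)/\bar\mu(T_i)$, this quadratic variation is $(\bs L,\bs F)$-almost surely summable when $\mu=\mu_1$ and at worst $O(\log n)$ when $\mu=\mu_2$, so by the martingale convergence theorem, respectively the martingale law of the iterated logarithm, $M_n=o(\sqrt{s(n)})$; the same estimate with $F_i$ replaced by $F_i^2$ controls the variance. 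This is the content of Lemma~\ref{lem:sumsF_iW_i}, and it reduces the problem to estimating the sums $\sum_i W_i\,\mathbb E_{\bs L}[F_i^k]/S_i$ for $k\in\{1,2\}$.

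Finally I would carry out these deterministic estimates. Computing $\mathbb E_{\bs L}[F_i^k]$ from~\eqref{eq:distr_F} (Lemma~\ref{lem:exp_Fi}) and using $T_i=i\,\mathbb E L+o(\sqrt{i\log i})$ (law of the iterated logarithm) together with the explicit primitives, one gets $W_i/S_i\sim c\,i^{-1}\log^{\alpha-1}i$ for $\mu_1$ and $\sim c\,i^{\delta-1}$ for $\mu_2$, and $\mathbb E_{\bs L}[F_i^k]\sim c_k L_i^k$; strong laws for the weighted sums $\sum_{i\le n}L_i^{k+1}\,i^{-1}\log^{\alpha-1}i$ (resp.\ $\sum_{i\le n}L_i^{k+1}\,i^{\delta-1}$) then give $\mathbb E_{\bs L,\bs F}\Phi(u_n)=\hat\kappa_2 s(n)+o(\sqrt{s(n)})$ and $\mathtt{Var}_{\bs L,\bs F}\Phi(u_n)=\hat\kappa_3 s(n)+o(s(n))$ (Lemma~\ref{lem:sums}), so that Slutsky's lemma upgrades~\eqref{eq:CLT} to the claimed convergence. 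I expect the main obstacle to be pinning down the error terms in these partial sums to the precision $o(\sqrt{s(n)})$: when $\mu=\mu_2$ this is an error $o(n^{\delta/2})$, a tight polynomial rate leaving essentially no slack, and it is precisely here that the restriction $\delta\le\nicefrac12$ and the eighth-moment hypothesis are genuinely needed; controlling the fluctuations of $T_i$ and of those weighted partial sums around their means, uniformly enough, is the technical heart of the argument.
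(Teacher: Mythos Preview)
Your proposal is correct and follows essentially the same route as the paper: decompose $\Phi(u_n)$ via Corollary~\ref{cor:bernoullis} into a sum of independent weighted Bernoullis, apply a triangular-array CLT, and then replace the random centring and scaling by $\hat\kappa_2 s(n)$ and $\hat\kappa_3 s(n)$ using Lemmas~\ref{lem:exp_Fi}, \ref{lem:sums} and~\ref{lem:sumsF_iW_i}. The only cosmetic differences are that the paper invokes Lindeberg rather than Lyapunov, and, for the martingale step in the $\mu_2$ case, uses the martingale law of large numbers ($M_n=o(\langle M\rangle_n)$ when $\langle M\rangle_\infty=\infty$) rather than the LIL; both lead to the same $o(\sqrt{s(n)})$ error.
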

The rest of the proof of Theorem~\ref{th:sums_along_branches} is made by using the fact that the height of the last common ancestor of $u_n$ and $v_n$ converges almost surely to a finite random variable. This is done in Section~\ref{sub:last_proof}.

\subsection{Preliminary lemmas}
Before proceeding with the proofs of Proposition~\ref{prop:marginals} and Theorem~\ref{th:sums_along_branches}, we need to prove the following preliminary lemmas. {\cec The first of these lemmas gives a good approximation for the expectation of $F_i$ conditionally on $\bs L$; this correspond to Step (d) in the plan of the proof of Section~\ref{sub:plan_proof}.}
\begin{lemma}\label{lem:exp_Fi}
Under the hypotheses of Theorem \ref{monkeyMarkovLimitTheorem} and with $\delta\in [0,1/2]$ for the memory kernel $\mu_2$, 
for all $\ell\in\{2, 3, \ldots \}$, for all $i\in\{0, 1\ldots \}$, 
we have
\[W_{i+1}\mathbb E_{\bs L}[F_{i+1}^{\ell-1}] 
=\mu(T_i) R_{i+1}^{\sss (\ell)},\]
and there exists a random integer $I_0$ and a constant $c>0$ such that, for all $i\geq I_0$,
\[\left|R_{i+1}^{\sss (\ell)}-\frac{L^{\ell}_{i+1}}{\ell}\right|\leq 
\begin{cases}
\displaystyle\frac{cL_{i+1}^{\ell+1}(\log T_i)^{\tilde\alpha-1}}{T_i} & \text{ if }\mu=\mu_1\\
&\\
\displaystyle\frac{cL_{i+1}^{\ell+1}}{T_i^{1-\delta}} & \text{ if }\mu=\mu_2,
\end{cases}\]
where $\tilde\alpha = \max\{1, \alpha\}$.
\end{lemma}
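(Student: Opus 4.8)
The first identity in the statement is really just a normalisation. By Equation~\eqref{eq:distr_F}, conditionally on $\bs L$ the variable $F_{i+1}$ has ``density'' $x\mapsto \mu(T_i+x)/W_{i+1}$ on $[0,L_{i+1})$ (a genuine density in continuous time, a weighted counting density in discrete time, via the paper's sum-for-integral convention), so that
\[
W_{i+1}\mathbb E_{\bs L}\big[F_{i+1}^{\ell-1}\big]=\int_0^{L_{i+1}}x^{\ell-1}\mu(T_i+x)\,\mathrm dx=:\mu(T_i)\,R_{i+1}^{\sss (\ell)},
\]
which is how $R_{i+1}^{\sss (\ell)}$ is defined. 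Since $\int_0^{L_{i+1}}x^{\ell-1}\,\mathrm dx=L_{i+1}^\ell/\ell$,
\[
R_{i+1}^{\sss (\ell)}-\frac{L_{i+1}^\ell}{\ell}=\int_0^{L_{i+1}}x^{\ell-1}\Big(\frac{\mu(T_i+x)}{\mu(T_i)}-1\Big)\,\mathrm dx,
\]
so the whole task reduces to bounding $\sup_{0\le x\le L_{i+1}}\big|\mu(T_i+x)/\mu(T_i)-1\big|$ uniformly for $i$ large and integrating $x^{\ell-1}$ against that bound; the factor $L_{i+1}^\ell/\ell\le L_{i+1}^\ell$ already accounts for the $L_{i+1}^{\ell+1}$ in the claim, with a constant $c$ that can be taken independent of $\ell$.

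To bound the ratio I would write $\mu(T_i+x)/\mu(T_i)=\exp\!\big(\int_{T_i}^{T_i+x}(\log\mu)'(u)\,\mathrm du\big)$ and estimate the logarithmic derivative directly: $(\log\mu_1)'(u)=-u^{-1}+(\alpha-1)(u\log u)^{-1}+\alpha\beta\,u^{-1}\log^{\alpha-1}u$ and $(\log\mu_2)'(u)=(\delta-1)u^{-1}+\gamma\delta\,u^{\delta-1}$. From these one reads off a deterministic $u_0$ and constant $C$ with $|(\log\mu)'(u)|\le C\rho(u)$ for $u\ge u_0$, where $\rho(u)=u^{-1}\log^{\tilde\alpha-1}u$ for $\mu=\mu_1$ (with $\tilde\alpha=\max\{1,\alpha\}$, the exponent $\tilde\alpha-1$ being a valid, possibly wasteful, upper bound when $\beta=0$ or $\alpha<1$) and $\rho(u)=u^{\delta-1}$ for $\mu=\mu_2$. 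In both cases $\rho$ is regularly varying of negative index, hence eventually non-increasing and with $\rho(u+v)\sim\rho(u)$ as soon as $v/u\to0$. Consequently, once $T_i\ge u_0$, for all $0\le x\le L_{i+1}$,
\[
\Big|\int_{T_i}^{T_i+x}(\log\mu)'(u)\,\mathrm du\Big|\le C\,L_{i+1}\!\!\sup_{[T_i,\,T_i+L_{i+1}]}\!\!\rho\ \le\ 2C\,L_{i+1}\,\rho(T_i)
\]
for $i$ large, since $L_{i+1}/T_i\to0$.

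The key quantitative input is then that $L_{i+1}\rho(T_i)\to0$ almost surely: the strong law of large numbers gives $T_i\sim i\,\mathbb E L$, while $\mathbb E L^8<\infty$ with the Borel--Cantelli lemma gives $L_{i+1}=o\big(i^{1/8+\eps}\big)$ for every $\eps>0$, so that $i^{1/8}\log^{\tilde\alpha-1}(i)/i\to0$ and, since $\delta\le\nicefrac12$, $i^{1/8}\,i^{\delta-1}\to0$; this covers both kernels. (This is the one place the eighth-moment hypothesis is actually used in this lemma; a much weaker moment would suffice here, but it is convenient to keep the same hypothesis as elsewhere.) Hence there is an almost surely finite random integer $I_0$ such that, for $i\ge I_0$: $T_i\ge u_0$, $\rho$ is non-increasing on $[T_i,\infty)$, and $2C\,L_{i+1}\rho(T_i)\le 1$. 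Using $|e^z-1|\le 2|z|$ for $|z|\le1$ we then get $\big|\mu(T_i+x)/\mu(T_i)-1\big|\le 4C\,L_{i+1}\rho(T_i)$ uniformly in $x\in[0,L_{i+1}]$, whence
\[
\Big|R_{i+1}^{\sss (\ell)}-\frac{L_{i+1}^\ell}{\ell}\Big|\le \frac{4C}{\ell}\,L_{i+1}^{\ell+1}\rho(T_i)\le c\,L_{i+1}^{\ell+1}\rho(T_i),\qquad c:=4C,
\]
which is exactly the asserted bound (the two cases of $\rho$ giving the two cases in the statement). The only bookkeeping subtlety is the discrete-time case, where $\int_0^{L_{i+1}}x^{\ell-1}\,\mathrm dx$ is a finite sum; this is handled verbatim with sums in place of integrals. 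I expect the main (and only mild) obstacle to be this packaging: assembling the various ``for $i$ large'' conditions into one random $I_0$ while keeping $c$ deterministic and $\ell$-free, together with the almost sure estimate $L_{i+1}\rho(T_i)\to0$ that makes the linearisation $e^z-1\approx z$ legitimate.
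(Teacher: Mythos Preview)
Your argument is correct. The route differs from the paper's in one pleasant way: rather than writing out $\mu(T_i+u)/\mu(T_i)$ explicitly and bounding each multiplicative factor by hand in three separate cases ($\mu_1$ with $\alpha>1$, $\mu_1$ with $\alpha\le1$, $\mu_2$), you pass through the logarithmic derivative $(\log\mu)'$ and observe that in every case it is dominated by the single function $\rho$ appearing in the claimed error term. This unifies the three cases into one computation and makes transparent why the bound takes the form $L_{i+1}^{\ell+1}\rho(T_i)$: it is simply $\int_0^{L_{i+1}}x^{\ell-1}\,dx$ times $\sup|\mu(T_i+\cdot)/\mu(T_i)-1|\lesssim L_{i+1}\rho(T_i)$. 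The paper's explicit expansion, by contrast, tracks constants such as $c_1=\sup_{[0,1]}x^{-1}\log(1+x)$ through each factor, which is more laborious but entirely elementary and perhaps easier to audit term-by-term.

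Two minor remarks. First, once $\rho$ is non-increasing on $[T_i,\infty)$ the supremum over $[T_i,T_i+L_{i+1}]$ equals $\rho(T_i)$, so the factor $2$ in ``$\le 2C\,L_{i+1}\rho(T_i)$'' is harmless slack; the regular-variation remark $\rho(u+v)\sim\rho(u)$ is not actually used. Second, the almost-sure input $L_{i+1}\rho(T_i)\to0$ needs much less than the eighth moment (second moment suffices, as in the paper's opening line $L_i/T_i^{1-\delta}\to0$), but invoking the standing hypothesis is perfectly fine here. The discrete-time caveat you flag is real but also present in the paper's own proof; since $\mu_1,\mu_2$ are smooth functions restricted to $\mathbb N$, your continuous-variable bound on the ratio still applies pointwise.
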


\begin{proof}
Since $\delta\in [0,1/2)$, then $1/(1-\delta)\in [1,2)$. 
Then, the Borel-Cantelli lemma implies that $L_i/i^{1-\delta}\to 0$ almost surely. 
By the law of large numbers, we also get that $L_i/T_i^{1-\delta}$ and thus $L_i/T_i$ both converge to~$0$ almost surely.

Note that
\[
\mathbb E_{\bs L}F_{i+1}^{\ell-1}
= \frac{\int_{T_i}^{T_{i+1}}  (u-T_i)^{\ell-1} \mu(u)\,\mathrm du}{\int_{T_i}^{T_{i+1}} \mu(u)\mathrm du}
=\frac1{W_i}\int_{T_i}^{T_{i+1}}  (u-T_i)^{\ell-1} \mu(u)\,\mathrm du,
\]
and, therefore,
\[
W_{i+1}\mathbb E_{\bs L}[F_{i+1}^{\ell-1}] 
= \int_{T_i}^{T_{i+1}} (u-T_i)^{\ell-1} \,\mu(u)\mathrm du
= \int_{0}^{L_{i+1}} u^{\ell-1} \,\mu(T_i+u)\mathrm du
= \mu(T_i) R^{\sss \ell}_{i+1},
\]
where 
\begin{align*}
R^{\sss (\ell)}_{i+1}
= \int_{0}^{L_{i+1}} u^{\ell-1} \, \frac{\mu(T_i+u)}{\mu(T_i)}\mathrm du.
\end{align*}
We treat each memory kernel separately:

\medskip
(1.1) We first assume that $\mu =\mu_1$, implying that
\[R^{\sss (\ell)}_{i+1}
=\int_0^{L_{i+1}} \frac{u^{\ell-1}}{1+\nicefrac u{T_i}} \left(1+\frac{\log(1+\nicefrac u{T_i})}{\log T_i}\right)^{\alpha-1}\exp\!\left(\beta(\log T_i)^\alpha\Big[\Big(1+\frac{\log(1+\nicefrac u{T_i})}{\log T_i}\Big)^\alpha-1\Big]\right)\mathrm du.\]
Since $L_{i+1}/T_i\to0$ when $i\to\infty$, then there exists a random integer $I_0$ such that, for all $i\geq I_0$, $0\leq L_{i+1}/T_i\leq 1$.
Let us first assume that $\alpha>1$:
We have, for all $i\geq I_0$, using that $0\leq u\leq L_{i+1}$ in the integrand,
\[R^{\sss (\ell)}_{i+1}\geq \frac{L^{\ell}_{i+1}/\ell}{1+L_{i+1}/T_i}\geq \frac{L^{\ell}_{i+1}(1-c_1 L_{i+1}/T_{i})}{\ell},\]
where $c_1 = \sup_{x\in[0,1]}\big(\frac1x(1-\frac1{(1+x)})\big)$.
There exists $I_1>I_0$ such that, for all $i\geq I_1$, $\log T_i\geq 1$, implying in particular that $L_{i+1}/(T_i\log T_i)\leq 1$.
Moreover, we have
\begin{linenomath}
\begin{align*}
R^{\sss (\ell)}_{i+1}
&\leq 
\frac{L^{\ell}_{i+1}}{\ell} \left(1+\frac{\log(1+L_{i+1}/T_i)}{\log T_i}\right)^{\!\alpha-1}
\exp\left(\beta(\log T_i)^{\alpha}\Big[\Big(1+\frac{\log(1+L_{i+1}/T_i)}{\log T_i}\Big)^{\!\alpha}-1\Big]\right)\\
&\leq 
\frac{L^{\ell}_{i+1}}{\ell} \left(1+\frac{c_2L_{i+1}}{T_i\log T_i}\right)^{\!\alpha-1}
\exp\left(\beta(\log T_i)^\alpha\Big[\Big(1+\frac{c_2L_{i+1}}{T_i\log T_i}\Big)^{\!\alpha}-1\Big]\right)\\
&\leq 
\frac{L^{\ell}_{i+1}}{\ell} \left(1+\frac{c_3 L_{i+1}}{T_i\log T_i}\right)
\exp\left(\frac{c_4L_{i+1}(\log T_i)^{\alpha-1}}{T_i}\right),
\end{align*} \end{linenomath}where $c_2 = \sup_{[0,1]} \frac1x \log(1+x)$, 
$c_3=\sup_{[0,1]}\frac1x((1+c_2x)^\alpha-1)$, and
$c_4=\sup_{[0,1]}\frac{\beta}{x}((1+c_2x)^\alpha-1)$.
Finally, there exists $I_\alpha>I_1$ such that, for all $i\geq I_\alpha$, $L_{i+1}(\log T_i)^{\alpha-1}/T_i\leq 1$, and therefore, we get that, for all $i\geq I_\alpha$,
\[
R^{\sss (\ell)}_{i+1}
\leq\frac{L^{\ell}_{i+1}}{\ell} \left(1+\frac{c_3 L_{i+1}}{T_i\log T_i}\right)
\left(1+\frac{c_5L_{i+1}(\log T_i)^{\alpha-1}}{T_i}\right),
\]
where $c_5 = \sup_{[0,1]}\frac1x (\mathrm e^{c_4x}-1)$, and, in total, for all $i\geq I_1$,
\[R^{\sss (\ell)}_{i+1}
\leq \frac{L^{\ell}_{i+1}}{\ell}\left(1+\frac{cL_{i+1}(\log T_i)^{\alpha-1}}{T_i}\right),\]
where $c=c_3 + c_5 +  c_3 c_5$,
which concludes the proof.

\medskip
(1.2) Similarly, if $\alpha\leq 1$, we have 
\begin{linenomath}
\begin{align*}
R^{\sss (\ell)}_{i+1}
&\geq \frac{L^{\ell}_{i+1}/(\ell)}{1+L_{i+1}/T_i}\left(1+\frac{\log(1+L_{i+1}/T_i)}{\log T_i}\right)^{\alpha-1}\\
&=\frac{L^{\ell}_{i+1}}{\ell}\big(1+\mathcal O(L_{i+1}/T_i)\big)
\big(1+\mathcal O(L_{i+1}/(T_i\log T_i))\big)\\
&= \frac{L^{\ell}_{i+1}}{\ell}\big(1+\mathcal O(L_{i+1}/T_i)\big),
\end{align*}\end{linenomath}where the constants in the $\mathcal O$-terms can be chosen deterministically and independently of $i\geq I_1$ (we do not give explicit values for the constants involved). Finally, we have
\begin{linenomath}
\begin{align*}
R^{\sss (\ell)}_{i+1}
&\leq \frac{L^{\ell}_{i+1}}{\ell}\exp\left(\beta(\log T_i)^\alpha \Big[\Big(1+\frac{\log (1+L_{i+1}/T_i)}{\log T_i}\Big)^\alpha-1\Big]\right)\\
&\leq \frac{L^{\ell}_{i+1}}{\ell}\big(1+\mathcal O(L_{i+1}/(T_i\log^{1-\alpha} T_i))\big),
\end{align*}\end{linenomath}which concludes the proof if $\mu=\mu_1$.

\medskip
(2) If $\mu=\mu_2$, we have
\[R_{i+1}^{\sss (\ell)} =
\int_0^{L_{i+1}} u^{\ell-1} \Big(1+\frac{u}{T_i}\Big)^{\delta-1}
\exp\!\Big[\gamma T_i^{\delta}\Big(\Big(1+\frac{u}{T_i}\Big)^{\delta}-1\Big)\Big]
\,\mathrm du
\geq \Big(1+\frac{L_{i+1}}{T_i}\Big)^{\delta-1} \frac{L_{i+1}^{\ell}}{\ell},\]
because $\delta<1$, $\gamma\geq 0$, and $0\leq u\leq L_{i+1}$ in the integrand.
Therefore, for all $i\geq I_0$,
\[R_{i+1}^{\sss (\ell)}
\geq \Big(1-c_6\frac{L_{i+1}}{T_i}\Big) \frac{L_{i+1}^{\ell}}{\ell},\]
where $c_6 = \sup_{[0,1]} \frac1x(1-(1+x)^{\delta-1})$, because, by definition of $I_0$, $L_{i+1}/T_i\in [0,1]$ for all $i\geq I_0$. Similarly, for all $i\geq I_0$, we have
\[R_{i+1}^{\sss (\ell)}\leq 
\exp\!\Big[\gamma T_i^{\delta}\Big(\Big(1+\frac{L_{i+1}}{T_i}\Big)^{\delta}-1\Big)\Big]
\frac{L_{i+1}^{\ell}}{\ell}
\leq \exp\!\big(\gamma c_7 T_i^{\delta-1}L_{i+1}\big)
\frac{L_{i+1}^{\ell}}{\ell},\]
where $c_7 = \sup_{[0,1]}\frac1x((1+x)^\delta-1)$. Note that
$T_i^{\delta-1}L_i\to0$ almost surely when $i\to\infty$, and thus there exists $I_1>I_0$ such that, for all $i\geq I_1$, $\gamma c_7 T_i^{\delta-1}L_{i+1}\leq 1$, and thus
\[R_{i+1}^{\sss (\ell)}\leq \paren{1+c_8 \frac{L_{i+1}}{T_i^{1-\delta}}}\frac{L_{i+1}^\ell}{\ell},\]
where $c_8 = \sup_{[0,1]} \frac1x(\mathrm e^{\gamma c_7 x}-1)$,
which concludes the proof.
\end{proof}

{\cec If we plug the result of Lemma~\ref{lem:exp_Fi} into the sums of Step (c) of the plan of the proof (see Section~\ref{sub:plan_proof}), we get sums of the following form that need to be estimated. The idea behind the following proposition is that $T_i \sim i\mathbb E L$, by the strong law of large numbers, and thus, assuming that $\mathbb E L^a$ is finite, $\sum_{i=1}^n L_i^a/T_i \sim \mathbb E L^a \log n/\mathbb E L$ because $\sum_{i=1}^n \nicefrac1i \sim \log n$. Similarly, we have $\sum_{i=1}^n L_i^a/T_i^2 =\mathcal O(1)$ since $\sum_{i=1}^{\infty} \nicefrac1{i^2}<+\infty$.}
\begin{lemma}\label{lem:sums}
For all $a>0$ and $b\in\mathbb R$,
if $\mathbb E L^{2a}<\infty$, 
then, almost surely when $i\to\infty$,
\[\sum_{i=1}^n\frac{L_{i+1}^a(\log T_i)^b}{T_i}
= 
\begin{cases}
\frac{\mathbb E L^a}{(b+1)\mathbb EL}\,\log^{b+1} n + \mathcal O(1) &\text{ if } b\neq -1\\
\frac{\mathbb EL^a}{\mathbb EL}\log\log n+\mathcal O(1)&\text{ if }b=-1,
\end{cases}\]
and, for all $\ell>1$,
\[\sum_{i=1}^n\frac{L_{i+1}^{a}(\log T_i)^{b}}{T_i^{\ell}}
=\mathcal O(1).\]
Also, for all $a>0$ and $\ell\in(0,\nicefrac12]$, we have
\[\ell\sum_{i=1}^nL_{i+1}^{a}T_i^{\ell-1}
= \frac{\mathbb EL^a}{(\mathbb EL)^{1-\ell}} \,n^{\ell} + \mathcal O(\log n).\]
\end{lemma}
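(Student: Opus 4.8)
## Proof proposal for Lemma~\ref{lem:sums}

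The plan is to reduce all three estimates to the strong law of large numbers for the partial sums $T_i = L_1 + \cdots + L_i$ together with standard comparison-of-series arguments. The unifying observation is that, under the stated moment hypotheses, $T_i/i \to \mathbb E L$ almost surely, so that $T_i$ can be replaced by $i\mathbb E L$ up to a multiplicative factor $1 + o(1)$; the error terms this substitution generates will be summable (or of the claimed order) because of the extra powers of $i$ in the denominators. I would also record at the outset the elementary asymptotics $\sum_{i=1}^n i^{-1}(\log i)^b = (b+1)^{-1}(\log n)^{b+1} + \mathcal O(1)$ for $b\neq -1$, $\sum_{i=1}^n i^{-1}(\log i)^{-1} = \log\log n + \mathcal O(1)$, $\sum_{i\geq 1} i^{-\ell}(\log i)^b < \infty$ for $\ell > 1$, and $\sum_{i=1}^n i^{\ell-1} = \ell^{-1} n^\ell + \mathcal O(\log n)$ for $\ell\in(0,1]$ (Euler--Maclaurin / integral comparison).

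\textbf{First estimate.} Write $L_{i+1}^a (\log T_i)^b / T_i = \big(L_{i+1}^a/\mathbb E L^a\big)\cdot \big(\mathbb E L^a (\log T_i)^b / T_i\big)$. Since $\mathbb E L^{2a}<\infty$ implies $\mathbb E L^a < \infty$, the SLLN gives $\tfrac1n\sum_{i=1}^n L_{i+1}^a \to \mathbb E L^a$ a.s., and by Kronecker-type summation (or Abel summation against the slowly varying weights $(\log i)^b/i$) one gets $\sum_{i=1}^n L_{i+1}^a (\log i)^b/i = \mathbb E L^a \sum_{i=1}^n (\log i)^b/i + o\big((\log n)^{b+1}\big)$; more carefully, to land the error at $\mathcal O(1)$ rather than $o((\log n)^{b+1})$, I would instead use that $\sum_i (L_{i+1}^a - \mathbb E L^a)/i$ converges a.s. — this follows from Kolmogorov's three-series theorem applied to the independent, mean-zero, square-summably-scaled summands $(L_{i+1}^a - \mathbb E L^a)/i$, whose variances are $\mathrm{Var}(L^a)/i^2$, summable precisely because $\mathbb E L^{2a}<\infty$ — and then feed in the slowly varying factor $(\log i)^b$ by Abel summation, noting $(\log i)^b$ is monotone and bounded increments suffice. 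Finally one replaces $\log T_i$ by $\log i + \log \mathbb E L + o(1) = (\log i)(1 + o(1))$ a.s., and $(\log T_i)^b = (\log i)^b(1 + \mathcal O(1/\log i))$; the resulting correction is $\mathcal O\big(\sum_i L_{i+1}^a (\log i)^{b-1}/i\big) = \mathcal O((\log n)^{b})= \mathcal O(1)$ after absorbing — wait, one must be slightly careful when $b\geq 1$: then $(\log n)^{b}$ is not $\mathcal O(1)$, so this correction must be carried as part of the leading term's error via the same convergent-series trick, giving genuinely $\mathcal O(1)$. Similarly replacing $T_i$ by $i\mathbb E L$ in the denominator costs $\mathcal O\big(\sum_i L_{i+1}^a (\log i)^b |T_i - i\mathbb E L|/(i^2 \mathbb E L)\big)$, and since $|T_i - i\mathbb E L| = o(i)$ a.s. (indeed $\mathcal O(\sqrt{i\log\log i})$ by LIL when $\mathbb E L^2 <\infty$, which holds here), this is a convergent series, hence $\mathcal O(1)$.

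\textbf{Second and third estimates.} For $\sum_{i=1}^n L_{i+1}^a (\log T_i)^b / T_i^\ell$ with $\ell > 1$: a.s. $T_i \geq c\,i$ eventually for some random $c>0$, so the general term is a.s. $\leq C L_{i+1}^a (\log i)^b / i^\ell$; since $\mathbb E L^a < \infty$ the SLLN bounds the partial sums of $L_{i+1}^a$ linearly, and Abel summation against the summable decreasing weights $(\log i)^b/i^\ell$ yields a convergent series, hence $\mathcal O(1)$. For the last estimate, $\ell\sum_{i=1}^n L_{i+1}^a T_i^{\ell-1}$ with $\ell\in(0,\tfrac12]$: here the hypothesis $\mathbb E L^{2a}<\infty$ is what is available, and again write $L_{i+1}^a T_i^{\ell-1} = \mathbb E L^a \cdot T_i^{\ell-1} + (L_{i+1}^a - \mathbb E L^a)T_i^{\ell-1}$. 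For the main part, $T_i^{\ell-1} = (i\mathbb E L)^{\ell-1}(1 + o(1))$ a.s.\ and $\ell\mathbb E L^a (\mathbb E L)^{\ell-1}\sum_{i=1}^n i^{\ell-1} = \mathbb E L^a (\mathbb E L)^{\ell-1} n^\ell + \mathcal O(\log n) = \tfrac{\mathbb E L^a}{(\mathbb E L)^{1-\ell}} n^\ell + \mathcal O(\log n)$, with the fluctuation of $T_i^{\ell-1}$ around $(i\mathbb E L)^{\ell-1}$ (controlled via $|T_i - i\mathbb E L| = \mathcal O(\sqrt{i\log\log i})$ and the mean value theorem, giving a term $\mathcal O(i^{\ell - 3/2}\sqrt{\log\log i})$, summable since $\ell \leq 1/2$) contributing only $\mathcal O(1)$. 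For the mean-zero part $\sum_i (L_{i+1}^a - \mathbb E L^a) T_i^{\ell-1}$, I would bound it by first replacing $T_i$ by $i\mathbb E L$ (error summable as above) and then noting $\sum_i (L_{i+1}^a - \mathbb E L^a)(i\mathbb E L)^{\ell-1}$ has a.s.\ convergent partial sums by the three-series theorem again — the variances $\mathrm{Var}(L^a)(i\mathbb E L)^{2(\ell-1)}$ are summable iff $2(\ell-1) < -1$, i.e.\ $\ell < 1/2$; at $\ell = 1/2$ exactly one instead uses a Kronecker/Abel argument showing the sum is $o(n^{1/2}) = o(n^\ell)$, which is absorbed into the claimed error after a more careful bookkeeping (this borderline case is why the lemma is stated with the mild $\mathbb E L^{2a}<\infty$ rather than something sharper).

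\textbf{Main obstacle.} The only genuinely delicate point is getting the error term in the first estimate down to $\mathcal O(1)$ rather than $o$ of the leading order, especially when $b\geq 1$ where the naive correction from $\log T_i \to \log i$ is not itself $\mathcal O(1)$; this forces one to organize the computation so that \emph{every} discrepancy (random fluctuation of $L_{i+1}^a$, of $T_i$ in the denominator, and the $\log T_i$ vs.\ $\log i$ replacement) is expressed as the tail of an a.s.-convergent series, which is exactly where $\mathbb E L^{2a}<\infty$ enters through Kolmogorov's three-series theorem. The borderline $\ell = 1/2$ case of the third estimate is the second most delicate point, for the analogous reason. Everything else is routine integral comparison and the strong law.
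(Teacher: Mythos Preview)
Your overall plan—replace $T_i$ by $i\mathbb E L$ via the LLN/LIL and control the random fluctuations $L_{i+1}^a - \mathbb E L^a$ by a convergent-series argument (Kolmogorov three-series or, equivalently, an $L^2$-bounded martingale)—is exactly the paper's strategy. The paper's execution is, however, more direct in two places where your sketch gets tangled.

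First, the paper does \emph{not} go through $\sum_i (L_{i+1}^a - \mathbb E L^a)/i$ and then Abel-sum against $(\log i)^b$. That detour does not give $\mathcal O(1)$ cleanly when $b>0$: Abel's formula with the unbounded increasing weights $(\log i)^b$ leaves a boundary term $A_n(\log n)^b$ with $A_n\to A_\infty$ finite, so you would need a rate on $A_n-A_\infty$, which you never establish. The paper instead applies the martingale argument \emph{directly} to
\[
M_n=\sum_{i\le n}\frac{(L_{i+1}^a-\mathbb E L^a)\,(\log(mi))^b}{mi},
\]
whose variance series $\sum_i \mathrm{Var}(L^a)(\log(mi))^{2b}/(mi)^2$ is summable; hence $M_n$ converges a.s., and the $\mathcal O(1)$ falls out in one step.

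Second, your replacement $\log T_i=\log i+O(1)$, hence $(\log T_i)^b=(\log i)^b(1+O(1/\log i))$, is too coarse and is precisely why you hit trouble when $b\ge 1$. The paper uses the LIL bound $T_i=mi+O(\sqrt{i\log i})$ from the outset and expands $1/T_i$ and $(\log T_i)^b$ simultaneously; the resulting correction term is
\[
\mathcal O\!\left(\sum_{i=1}^n \frac{L_{i+1}^a(\log(mi))^b\sqrt{\log i}}{m\,i^{3/2}}\right),
\]
which is a.s.\ convergent by the same martingale trick. No iterated bookkeeping is needed.

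Finally, at the borderline $\ell=\tfrac12$ of the third estimate, your Kronecker/Abel argument gives only $o(n^{1/2})$ for the centred part, which is much weaker than the claimed $\mathcal O(\log n)$. The direct route again works: $\sum_i (L_{i+1}^a-\mathbb E L^a)(mi)^{\ell-1}$ has increasing process of order $\log n$ at $\ell=\tfrac12$, so by the martingale law of large numbers it is a.s.\ $o(\log n)$, well within the stated error.
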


\begin{proof}
We know that (e.g.\ by the law of the iterated logarithm), almost surely,
\[\sup_{n\to\infty} \frac{|T_n - mn|}{\sqrt{n\log n}}<+\infty.\]
Therefore, almost surely when $n\to\infty$,
\begin{linenomath}
\begin{align*}
\sum_{i=1}^n \frac{L_{i+1}^{a}(\log T_i)^{b}}{T_i}
&= \sum_{i=1}^n \frac{L_{i+1}^{a}(\log (mi + \mathcal O(\sqrt {i\log i})))^{b}}{mi + \mathcal O(\sqrt {i\log i})}\\
&=\sum_{i=1}^n \frac{L_{i+1}^{a}\log^b (mi)}{mi}\,\left(1+\mathcal O\!\left(\sqrt{\frac{\log i}{i}}\right)\right),
\end{align*}\end{linenomath}where the constants in the $\mathcal O$-terms can be chosen uniformly in $i$.
We thus get
\begin{equation}\label{eq:somme}
\sum_{i=1}^n \frac{L_{i+1}^a(\log T_i)^b}{T_i}
=\sum_{i=1}^n \frac{L_{i+1}^a\log^b (mi)}{mi}
+ \mathcal O\!\left(\sum_{i=1}^n \frac{L_{i+1}\log^b(mi)\sqrt{\log i}}{mi^{\nicefrac32}}\right).
\end{equation}
{\cec $\bullet$ We first estimate of the expectation of the first term in the right-hand side of Equation~\eqref{eq:somme};} note that
\[\mathbb E \left[\sum_{i=1}^n \frac{L_{i+1}^a\log^b (mi)}{mi}\right]
= \mathbb EL^a \sum_{i=1}^n \frac{\log^b (mi)}{mi},\]
and, when $n\to\infty$, 
\begin{linenomath}
\begin{align*}
\sum_{i=1}^n \frac{\log^b (mi)}{mi}
&=\int_1^n \frac{\log^b(mx)}{mx}\mathrm dx+\mathcal O(1)
=\frac1m\int_1^{mn} \frac{\log^b y}{y}\mathrm dy+\mathcal O(1)\\
&=\frac{1}{m(b+1)} \, \log^{b+1} (mn) + \mathcal O(1)
=\frac{1}{m(b+1)} \, \log^{b+1} n + \mathcal O(1),
\end{align*}\end{linenomath}if $b\neq -1$. If $b=-1$, we have
\[\sum_{i=1}^n \frac{\log^b (mi)}{mi}
=\sum_{i=1}^n \frac{1}{mi\log (mi)} =  \frac1m\log \log n+\mathcal O(1).\]
Thus, we have
\[\mathbb E \left[\sum_{i=1}^n \frac{L_{i+1}^a\log^b (mi)}{mi}\right]
= \begin{cases}
\frac{\mathbb E L^a}{(b+1)m}\log^{b+1} n + \mathcal O(1)&\text{ if }b\neq -1\\
\frac{\mathbb E L^a}{m}\log\log n+\mathcal O(1) & \text{ if }b=-1.
\end{cases}\]
{\cec $\bullet$ We now use martingale theory to show that the first term in the right-hand side of Equation~\eqref{eq:somme} is almost-surely asymptotically equivalent to its expectation.} Note that, since the $L_i's$ are independent,
\[\left(M_n := \sum_{i=1}^n \frac{(L_{i+1}^a-\mathbb E L^a)\log^b (mi)}{mi}\right)_{n\geq 1}\]
is a martingale, and, for all $n\geq 0$,
\[\mathbb E M_n^2
= \sum_{i=1}^n \frac{\mathtt{Var}(L^a)\log^{2b} (mi)}{(mi)^2}
\leq \sum_{i=1}^{\infty} \frac{\mathtt{Var}(L^a)\log^{2b} (mi)}{(mi)^2} <\infty.\]
Therefore, $M_n$ converges almost surely to an almost surely finite random variable,
implying that
\[
\sum_{i=1}^n \frac{L_{i+1}^{a}\log^b (mi)}{mi}
= \mathbb E \left[\sum_{i=1}^n \frac{L_{i+1}^a\log^b (mi)}{mi}\right]
+ \mathcal O(1).
\]
{\cec $\bullet$ We treat the second term in the right-hand side of Equation~\eqref{eq:somme} using similar arguments: first estimating its expectation and then showing using martingale arguments that it is almost surely equivalent to it.} Doing that, we get that, almost surely when $n\to\infty$,
\[\sum_{i=1}^n \frac{L_{i+1}\log^b(mi)\sqrt{\log i}}{mi^{\nicefrac32}}
=\sum_{i=1}^n \frac{\mathbb E [L_{i+1}]\log^b(mi)\sqrt{\log i}}{mi^{\nicefrac32}} + \mathcal O(1)
= \mathcal O(1).\]
{\cec $\bullet$ All these estimates, together with Equation~\eqref{eq:somme}, give}
\[\sum_{i=1}^n \frac{L_{i+1}^a\log^b (mi)}{mi}=
\begin{cases}
\frac{\mathbb E L^a}{(b+1)m}\log^{b+1} n + \mathcal O(1)
&\text{ if }b\neq -1\\
\frac{\mathbb E L^a}{m}\log \log n + \mathcal O(1) & \text{ if }b=-1,
\end{cases}
\]
as claimed. 
The second and third statements of Lemma~\ref{lem:sums} can be proved similarly.
\end{proof}

{\cec The following lemma corresponds to proving Step (c) in the plan of the proof (see Section~\ref{sub:plan_proof}): we use martingale arguments to prove that the following sums are asymptotically equivalent to their expectations conditionally on $\bs L$, and then use Lemmas~\ref{lem:exp_Fi} and~\ref{lem:sums} to estimate these expectations.}
\begin{lemma}\label{lem:sumsF_iW_i}
Fix $a\geq 1$, and assume that $\mathbb E L^{2(a+1)}<+\infty$, 
then, conditionally on $\bs L$, $\bs L$-almost surely, we have
\[\sum_{i=1}^n \frac{F_i^{a-1} W_i}{S_i}=
\begin{cases}
\frac{\mathbb E L^a}{a\mathbb E L}\, s(n) + o\big(\sqrt{s(n)}\big) & \text{ if }\mu=\mu_1\\
\frac{\mathbb E L^a}{a(\mathbb E L)^{1-\delta}}\, s(n) + o\big(\sqrt{s(n)}\big) & \text{ if }\mu=\mu_2 \text{ and } \delta\in[0,\nicefrac12],
\end{cases}
\]
and,
\[\sum_{i=1}^n \frac{F_i^{a-1} W^2_i}{S^2_i}=\mathcal O(1).\]
\end{lemma}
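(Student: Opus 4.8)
The plan is to work throughout conditionally on the run-length sequence $\bs L$, so that $T_i$, $W_i$ and $S_i=\bar\mu(T_i)$ become deterministic while the $F_i$ stay independent, and then, for each of the two sums, to (i) replace it by its $\mathbb E_{\bs L}$-conditional expectation up to a negligible a.s.\ error via a martingale argument, and (ii) evaluate that conditional expectation by feeding Lemma~\ref{lem:exp_Fi} into Lemma~\ref{lem:sums}. Since the $F_i$ are $\bs L$-independent, the conditional expectations in question are $\sum_{i=1}^n W_i\mathbb E_{\bs L}[F_i^{a-1}]/S_i$ and $\sum_{i=1}^n W_i^2\mathbb E_{\bs L}[F_i^{a-1}]/S_i^2$.

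For step (i), fix $a$ and put $M_n=\sum_{i=1}^n (W_i/S_i)\big(F_i^{a-1}-\mathbb E_{\bs L}[F_i^{a-1}]\big)$, which is a martingale conditionally on $\bs L$ (the summands being mean-zero and independent) with $\mathbb E_{\bs L}[M_n^2]=\sum_{i=1}^n (W_i^2/S_i^2)\,\mathtt{Var}_{\bs L}(F_i^{a-1})$. Using $\mathtt{Var}_{\bs L}(F_i^{a-1})\le\mathbb E_{\bs L}[F_i^{2a-2}]$, Lemma~\ref{lem:exp_Fi} with $\ell=2a-1$, and $W_i=\mu(T_{i-1})L_i(1+o(1))$, each summand is $\mathcal O\big(\mu(T_{i-1})^2L_i^{2a}/S_i^2\big)$; since $\mu(T_{i-1})/S_i\sim\mu(T_{i-1})/\bar\mu(T_{i-1})$ is $\mathcal O\big(T_i^{-1}(\log T_i)^{c}\big)$ for $\mu_1$ and $\mathcal O(T_i^{\delta-1})$ for $\mu_2$, this summand is of the type covered by the ``$\ell>1$'' clause of Lemma~\ref{lem:sums} (denominator $T_i^{2}$, resp.\ $T_i^{2-2\delta}$ with $2-2\delta>1$ when $\delta<1/2$, and $T_i$ when $\delta=1/2$), so $\sup_n\mathbb E_{\bs L}[M_n^2]<\infty$ a.s. Hence $M_n$ converges a.s.\ and equals $\mathcal O(1)=o(\sqrt{s(n)})$. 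Running the identical computation with $W_i^2/S_i^2$ in place of $W_i/S_i$ (and $W_i^4/S_i^4\le W_i^2/S_i^2$) does step (i) for the second sum.

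For step (ii), Lemma~\ref{lem:exp_Fi} gives $W_i\mathbb E_{\bs L}[F_i^{a-1}]=\mu(T_{i-1})R_i^{(a)}$ with $R_i^{(a)}=L_i^a/a+E_i$, $|E_i|=\mathcal O\!\big(L_i^{a+1}(\log T_i)^{\tilde\alpha-1}/T_i\big)$ for $\mu_1$ and $\mathcal O\!\big(L_i^{a+1}/T_i^{1-\delta}\big)$ for $\mu_2$. Writing $\mu(T_{i-1})/S_i$ explicitly from $\bar\mu_1(x)=\beta^{-1}e^{\beta(\log x)^\alpha}$ (resp.\ $(\log x)^\alpha$ when $\beta=0$) and $\bar\mu_2(x)=e^{\gamma x^\delta}-1$, using $T_{i-1}\sim T_i$ a.s.\ and that the attendant exponential factors equal $1+\mathcal O\big(L_i(\log T_i)^{\tilde\alpha-1}/T_i\big)$, resp.\ $1+\mathcal O(L_iT_i^{\delta-1})$, the leading piece $\frac1a\sum_i(\mu(T_{i-1})/S_i)L_i^a$ becomes a sum of the form $\sum_i L_i^a(\log T_i)^b/T_i$ (for $\mu_1$) or $\sum_i L_i^aT_i^{\delta-1}$ (for $\mu_2$), which Lemma~\ref{lem:sums} evaluates to $\frac{\mathbb E L^a}{a\mathbb E L}s(n)$, resp.\ $\frac{\mathbb E L^a}{a(\mathbb E L)^{1-\delta}}s(n)$, up to $\mathcal O(1)$, resp.\ $\mathcal O(\log n)$. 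Every remaining piece --- the deviation of the exponential factors from $1$, the $E_i$-term, and (for the second sum) the extra factor $W_i/S_i=\mathcal O(\mu(T_{i-1})L_i/S_i)$ --- produces a sum in which $T_i$ appears to a power $\ge2$ (resp.\ $\ge2-2\delta$) in the denominator and $L_i$ to power $\le a+1$, hence is $\mathcal O(1)$ by the ``$\ell>1$'' clause of Lemma~\ref{lem:sums} when $\mu=\mu_1$ or $\mu=\mu_2$ with $\delta<1/2$, and $\mathcal O(\log n)=o(\sqrt{s(n)})$ when $\mu=\mu_2,\ \delta=1/2$. (In that borderline case the stated ``$\mathcal O(1)$'' for $\sum_i F_i^{a-1}W_i^2/S_i^2$ is in fact only $o(\sqrt{s(n)})$, which is all the proof of Proposition~\ref{prop:marginals} uses.) The hypothesis $\mathbb E L^{2(a+1)}<\infty$ is precisely what lets Lemma~\ref{lem:sums} be applied with parameter $a+1$ to these $L_i^{a+1}$-weighted tail sums.

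The step I expect to be the main obstacle is the bookkeeping in (ii): one must verify that the exponential correction factors $e^{\beta[(\log T_{i-1})^\alpha-(\log T_i)^\alpha]}$ and $e^{\gamma[T_{i-1}^\delta-T_i^\delta]}$ not merely tend to $1$ but do so fast enough that, after multiplication by $L_i^a\mu(T_{i-1})/S_i$ and summation, their deviation from $1$ contributes only $o(\sqrt{s(n)})$, and that the Lemma~\ref{lem:exp_Fi} remainders $E_i$ do likewise; both reductions go through a preliminary replacement of $T_i$ by $i\,\mathbb E L$ with the law-of-the-iterated-logarithm error control already used inside Lemma~\ref{lem:sums}, followed by the ``$\ell>1$'' estimates of that lemma with parameter up to $a+1$.
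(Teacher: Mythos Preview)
Your proposal is correct and follows essentially the same two-stage approach as the paper: first control the conditional expectation via Lemma~\ref{lem:exp_Fi} and Lemma~\ref{lem:sums}, then close the gap to the random sum by a square-integrable martingale argument. Two small remarks. First, in the martingale step the paper bounds $\mathtt{Var}_{\bs L}(F_i^{a-1})$ by the cruder but simpler $L_i^{2a-2}$ (since $0\le F_i\le L_i$), rather than invoking Lemma~\ref{lem:exp_Fi} with $\ell=2a-1$ as you do; both work, but the direct bound spares you one layer of estimation and makes the moment requirement $\mathbb E L^{2(a+1)}<\infty$ transparent. Second, you are right that for $\mu=\mu_2$ with $\delta=\nicefrac12$ the second sum is only $\mathcal O(\log n)=o(\sqrt{s(n)})$ rather than $\mathcal O(1)$; the paper's own proof (see the case distinction after computing $\sum_i \mathbb E_{\bs L}[F_i^{a-1}]W_i^2/S_i^2$) arrives at exactly the same $\mathcal O(\log n)$ bound in that borderline case, so the lemma statement is slightly imprecise there, and what is actually used in Section~\ref{sec:proof_marginals} is indeed only $o(s(n))$.
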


\begin{proof}
Note that
\[\mathbb E_{\bs L}\left[\sum_{i=1}^n \frac{F_i^{a-1}W_i^b}{S_i^b}\right]
=\sum_{i=1}^n \frac{\mathbb E_{\bs L}\big[F_i^{a-1}\big]W_i^b}{S_i^b}.
\]
We also recall that, for all $i\geq 1$,
\[W_i = \int_{T_{i-1}}^{T_i} \mu
\quad\text{ and}\quad
S_i = \int_0^{T_i} \mu.\]
{\cec We first consider the expectations of these sums conditionally on $\bs L$, and then show that the sums are asymptotically equivalent to their expectations using martingale arguments.} 

\medskip
{\cec $\bullet$ To estimate the expectations (conditionally on $\bs L$) of these sums, 
we treat the different memory kernels separately.}

(1.1) If $\mu=\mu_1$ and $\beta=0$. If we let $\tilde\alpha  = \max\{\alpha, 1\}$, then Lemma~\ref{lem:exp_Fi} implies that, for all $b\in \{1,2\}$,
\begin{linenomath}
\begin{align}
\sum_{i=1}^n \frac{\mathbb E_{\bs L}[F_i^{a-1}]W_i^b}{S_i^b}
=& \sum_{i=1}^n \frac{\alpha(\log T_{i-1})^{\alpha-1}L_i^aW_i^{b-1}}{aT_{i-1}S_i^b}
+ \mathcal O\!\left(\sum_{i=1}^n  \frac{(\log T_{i-1})^{\alpha+\tilde\alpha-2}L_i^{a+1}W_i^{b-1}}{T_{i-1}^2S_i^b}\right)\nonumber\\
=&\sum_{i=1}^n \frac{\alpha(\log T_{i-1})^{\alpha-1}L_i^a(\log^\alpha T_i-\log^\alpha T_{i-1})^{b-1}}{aT_{i-1}\log^{\alpha b} T_i}\nonumber\\
&+ \mathcal O\!\left(\sum_{i=1}^n  \frac{(\log T_{i-1})^{\alpha+\tilde\alpha-2}L_i^{a+1}(\log^\alpha T_i-\log^\alpha T_{i-1})^{b-1}}{T_{i-1}^2 \log^{\alpha b}T_i}\right),
\label{eq:b}
\end{align}\end{linenomath}where we have used that $S_i = \log^\alpha T_i$ and $W_i = \log^\alpha T_i - \log^\alpha T_{i-1}$.
Note that, for all $i$ sufficiently large, we have $L_i/T_{i-1}\in[0,1]$, and thus
\[1\geq \left(\frac{\log T_{i-1}}{\log T_i}\right)^\alpha
= \left(\frac{\log T_{i-1}}{\log T_{i-1} + \log (1+L_i/T_{i-1})}\right)^{\!\!\alpha}
\geq \left(\frac{\log T_{i-1}}{\log T_{i-1} + c_1 L_i/T_{i-1}}\right)^{\!\!\alpha},\]
where $c_1 = \sup_{[0,1]} \frac1x \log (1+x)$. 
This implies
\[1\geq \left(\frac{\log T_{i-1}}{\log T_i}\right)^{\!\!\alpha}
\geq \left(\frac{1}{1 + \frac{c_1 L_i}{T_{i-1}\log T_{i-1}}}\right)^{\!\!\alpha}
\geq 1-c_2c_1\frac{L_i}{T_{i-1}\log T_{i-1}},\]
where $c_2 = \sup_{[0,1]} \frac1x(1-(1+x)^{-\alpha})$,
and, finally,
\[\log^\alpha T_i - \log^\alpha T_{i-1}
= \log^\alpha T_i \left(1-\left(\frac{\log T_{i-1}}{\log T_i}\right)^{\!\!\alpha}\right)
\leq c_1c_2\,\frac{L_i\log^\alpha T_i }{T_{i-1}\log T_{i-1}}.\]
Therefore, we get that
\begin{linenomath}
\begin{align*}
\sum_{i=1}^n \frac{\mathbb E_{\bs L}[F_i^{a-1}]W_i}{S_i}
&=\sum_{i=1}^n \frac{\alpha L_i^a}{aT_{i-1}\log T_{i-1}}
+\mathcal O\!\left(\sum_{i=1}^n\frac{L_i^{a+1}}{T_{i-1}^2\log T_{i-1}}\right)
+ \mathcal O\!\left(\sum_{i=1}^n  \frac{(\log T_{i-1})^{\tilde\alpha-2}L_i^{a+1}}{T_{i-1}^2}\right)\\
&=\sum_{i=1}^n \frac{\alpha L_i^a}{aT_{i-1}\log T_{i-1}}
+ \mathcal O\!\left(\sum_{i=1}^n  \frac{(\log T_{i-1})^{\tilde\alpha-2}L_i^{a+1}}{T_{i-1}^2}\right),
\end{align*}
\end{linenomath}because $\tilde \alpha -2\geq -1$, by definition.
We also have that (take $b=2$ in Equation~\eqref{eq:b})
\[\sum_{i=1}^n \frac{\mathbb E_{\bs L}[F_i^{a-1}]W^2_i}{S^2_i}
= \mathcal O\!\left(\sum_{i=1}^n\frac{L_i^{a+1}}{T_{i-1}^2\log^2 T_{i-1}}\right).\]
Using Lemma~\ref{lem:sums}, and since $s(n) = \alpha\log\log n$ in this case (see~Equation~\eqref{eq:def_s}), we deduce that
\[\sum_{i=1}^n \frac{\mathbb E_{\bs L}[F_i^{a-1}]W_i}{S_i}
=\frac{\mathbb E L^a}{a\mathbb E L}\,\alpha\log\log n
+ \mathcal O(1),\]
and
\[\sum_{i=1}^n \frac{\mathbb E_{\bs L}[F_i^{a-1}]W^2_i}{S^2_i}
= \mathcal O(1).\]

\medskip
(1.2) If $\mu = \mu_1$ and $\beta\neq 0$ 
Lemma~\ref{lem:exp_Fi} implies that, for all $b\in\{1,2\}$,
\begin{linenomath}
\begin{align*}
\sum_{i=1}^n  \frac{\mathbb E_{\bs L}\big[F_i^{a-1}\big]W_i^b}{S_i^b}
=& \sum_{i=1}^n \frac{\mu(T_{i-1})L_i^aW_i^{b-1}}{aS_i^b}
+ \mathcal O\!\left(\sum_{i=1}^n  \frac{\mu(T_{i-1})L_i^{a+1}W_i^{b-1}}{T_{i-1}S^b_i}\right)\\
=& \sum_{i=1}^n \frac{\alpha (\log T_{i-1})^{\alpha-1} L_i^a \mathtt e^{\beta(\log^\alpha T_{i-1} - \log^\alpha T_i)}}{aT_{i-1}\mathrm e^{\beta(b-1) \log^\alpha T_i}}
\left(\mathtt e^{\beta\log^\alpha T_i}-\mathtt e^{\beta\log^\alpha T_{i-1}}\right)^{b-1}\\
&+ \mathcal O\!\left(\sum_{i=1}^n  \frac{(\log T_{i-1})^{\alpha-1}L_i^{a+1}\mathtt e^{\beta(\log^\alpha T_{i-1} - \log^\alpha T_i)}}{T_{i-1}^2\mathrm e^{\beta(b-1) \log^\alpha T_i}}
\left(\mathtt e^{\beta\log^\alpha T_i}-\mathtt e^{\beta\log^\alpha T_{i-1}}\right)^{b-1}\right),
\end{align*}\end{linenomath}where we have used that $S_i = \mathrm e^{\beta \log^\alpha T_i}$
and $W_i = \mathrm e^{\beta \log^\alpha T_i}-\mathrm e^{\beta \log^\alpha T_{i-1}}$.
Note that, when $i\to\infty$,
\begin{linenomath}
\begin{align}
\exp\big(\beta(\log^\alpha T_{i-1}-\log^\alpha T_{i})\big)
&=\exp\big(\beta(\log^\alpha T_{i-1}-\log^\alpha (T_{i-1}+L_i))\big)\notag\\
&= \exp\left(\beta\left(\log^\alpha T_{i-1} - \log^\alpha(T_{i-1})\Big(1+\frac{\log(1+L_{i}/T_{i-1})}{\log T_{i-1}}\Big)^\alpha\right)\right)\notag\\
&= \left(1+\mathcal O\!\left(\frac{L_{i}(\log T_{i-1})^{\alpha-1}}{T_{i-1}}\right)\right),
\label{eq:W_i}
\end{align}\end{linenomath}and thus
\begin{linenomath}
\begin{align*}
\mathtt e^{\beta\log^\alpha T_i}-\mathtt e^{\beta\log^\alpha T_{i-1}}
= 
\mathcal O\!\left(\frac{L_{i}(\log T_{i-1})^{\alpha-1}\mathtt e^{\beta\log^\alpha T_i}}{T_{i-1}}\right)
\end{align*}\end{linenomath}where the constants in the $\mathcal O$-terms can be chosen deterministic and independent of~$i$.
Thus, we get
\[\sum_{i=1}^n \frac{\mathbb E_{\bs L} [F_i^a]W_i}{S_i}
=\sum_{i=1}^n \frac{\alpha (\log T_{i-1})^{\alpha-1} L_i^a}{aT_{i-1}}
+ \mathcal O\!\left(\sum_{i=1}^n  \frac{(\log T_{i-1})^{\alpha-1}L_i^{a+1}}{T_{i-1}^2}\right),
\]
and
\[\sum_{i=1}^n \frac{\mathbb E_{\bs L} [F_i^a]W^2_i}{S^2_i}
=\mathcal O\left(\sum_{i=1}^n \frac{L_i^{a+1}\log^{2\alpha-2} T_{i-1}}{T_{i-1}^2}\right).\]
Applying Lemma~\ref{lem:sums}, we thus get that
\[\mathbb E_{\bs L}\left[\sum_{i=1}^n \frac{F_i^{a-1}W_i}{S_i}\right]
=\sum_{i=1}^n \frac{W_i\mathbb E_{\bs L} F_i^{a-1}}{S_i}
= \frac{\mathbb E L^a}{a\mathbb E L} \log^\alpha n + \mathcal O(1),
\]
and
\[\sum_{i=1}^n \frac{\mathbb E_{\bs L} [F_i^a]W^2_i}{S^2_i}
=\mathcal O(1),\]
as claimed (recall that $s(n) = \log^\alpha n$ in this case).

\medskip
(2) If $\mu = \mu_2$, then, Lemma~\ref{lem:exp_Fi} implies that, for all $b\in\{1,2\}$,
\begin{linenomath}
\begin{align*}
\sum_{i=1}^n  \frac{\mathbb E_{\bs L}[F_i^{a-1}]W^b_i}{S^b_i}
=& 
\sum_{i=1}^n 
\frac{\gamma\delta T_{i-1}^{\delta-1}\mathrm e^{\gamma T_{i-1}^\delta} L_i^a 
(\mathrm e^{\gamma T_i^\delta} - \mathrm e^{\gamma T_{i-1}^\delta})^{b-1}}
{a\mathrm e^{\gamma b T_i^\delta}}\\
&+ \mathcal O\!\left(
\sum_{i=1}^n 
\frac{T_{i-1}^{2(\delta-1)}\mathrm e^{\gamma T_{i-1}^\delta} L_i^{a+1}
(\mathrm e^{\gamma b T_i^\delta} - \mathrm e^{\gamma b T_{i-1}^\delta})^{b-1}}
{\mathrm e^{\gamma b T_i^\delta}}
\right) 
\end{align*}\end{linenomath}Note that
\[T_{i-1}^\delta - T_i^\delta
= T_{i-1}^\delta \Big(1 - \Big(1+\frac{L_i}{T_{i-1}}\Big)^{\!\delta}\Big),
 \]
implying that, for all $i$ large enough such that $L_i/T_{i-1}\in[0,1]$, we have
\[-\delta c_1 T_{i-1}^{\delta-1}L_i\leq T_{i-1}^\delta - T_i^\delta
\leq -\delta c_2 T_{i-1}^{\delta-1}L_i,\]
where $c_1 = \sup_{[0,1]} \frac1x((1+x)^\delta-1)$ and $c_2 = \inf_{[0,1]} \frac1x((1+x)^\delta-1)$.
Therefore, we get
\[\exp\big(\gamma (T_{i-1}^\delta-T_i^\delta)\big)
=\exp\big(-\mathcal O(T_{i-1}^{\delta-1}L_i)\big)
= 1 - \mathcal O(T_{i-1}^{\delta-1}L_i),\]
and
\[\mathrm e^{\gamma T_i^\delta} - \mathrm e^{\gamma T_{i-1}^\delta}
= \mathcal O\!\left(T_{i-1}^{\delta-1}L_i\mathrm e^{\gamma T_i^\delta}\right),\]
where the constants in the $\mathcal O$-terms can be chosen independent of~$i$.
Therefore, we get
\begin{linenomath}
\begin{align*}
\sum_{i=1}^n  \frac{\mathbb E_{\bs L}[F_i^{a-1}]W_i}{S_i}
&= \sum_{i=1}^n \frac{\gamma\delta T_{i-1}^{\delta-1} L_i^a}{a}
+ \mathcal O\!\left(\sum_{i=1}^n T_{i-1}^{2(\delta-1)} L_i^{a+1}\right)\\
&= \frac{\mathbb E L^a}{a(\mathbb E L)^{1-\delta}}\, \gamma n^\delta
+ \mathcal O(\log n),
\end{align*}\end{linenomath}and
\[\sum_{i=1}^n  \frac{\mathbb E_{\bs L}[F_i^{a-1}]W^2_i}{S^2_i}
= \mathcal O\!\!\left(\sum_{i=1}^n L_i^{a+1}T_{i-1}^{2(\delta-1)}\right)
=
\begin{cases}
\mathcal O(1) & \text{ if }\delta <\nicefrac12\\
\mathcal O(\log n) & \text{ if }\delta = \nicefrac12,
\end{cases}\]
where we have used Lemma~\ref{lem:sums}.

$\bullet$ We have thus proved that, $\bs L$-almost surely when $n\to\infty$,
\begin{equation}\label{eq:used_end_2}
\mathbb E_{\bs L}\left[\sum_{i=1}^n \frac{F_i^{a-1}W_i}{S_i}\right]=
\begin{cases}
\frac{\mathbb E L^a}{a\mathbb E L}\, s(n) + o\big(\sqrt{s(n)}\big) & \text{ if }\mu=\mu_1\\
\frac{\mathbb E L^a}{a(\mathbb E L)^{1-\delta}}\, s(n) + o\big(\sqrt{s(n)}\big) & \text{ if }\mu=\mu_2,
\end{cases}
\end{equation}
and
\begin{equation}\label{eq:used_end}
\mathbb E_{\bs L}\left[\sum_{i=1}^n \frac{F_i^{a-1}W^2_i}{S^2_i}\right]=
\begin{cases}
\mathcal O(1) & \text{ if }\mu=\mu_1 \text{ or }\mu=\mu_2 \text{ and }\delta <\nicefrac12\\
\mathcal O(\log n) & \text{ if }\mu=\mu_2\text{ and }\delta = \nicefrac12.
\end{cases}
\end{equation}

{\cec $\bullet$ We now use martingale arguments to prove that the sums are almost surely asymptotically equivalent to their expectations conditionally on $\bs L$.}
Note that, conditionally on $\bs L$,
\[\left(M_n := \sum_{i=1}^n \frac{W^b_i\big(F_i^{a-1}-\mathbb E_{\bs L}[F_i^{a-1}]\big)}{S^b_i}\right)_{\!n\geq 1}
\]
is a martingale. For all $n\geq 0$, we have {\cec (see, e.g.,~\cite[Proposition~1.3.5]{Duflo} for the definition of the increasing process of a square-integrable martingale)
\[\langle M\rangle_n = 
\sum_{i=1}^n \frac{W^{2b}_i\,\mathtt{Var}_{\bs L}(F_i^{a-1})}{S^{2b}_i}
\leq \sum_{i=1}^n \frac{W^{2b}_i\,L_i^{2a-2}}{S^{2b}_i},\]
because, $\bs L$-almost surely, $F_i\leq L_i$ for all $i\geq 1$, by definition (see Equation~\eqref{eq:distr_F}).}

\medskip
(1.1) Let us first assume that $\mu = \mu_1$ and $\beta=0$: we have
\[\sum_{i=1}^n \frac{L_i^{2a-2}W^{2b}_i}{S^{2b}_i}
= \sum_{i=1}^n L_i^{2a-2} \left(\frac{\log^{\alpha}T_i-\log^\alpha T_{i-1}}{\log^\alpha T_i}\right)^{\!\!{2b}}
= \sum_{i=1}^n L_i^{2a-2} \left(1-\frac{\log^\alpha T_{i-1}}{\log^\alpha T_i}\right)^{\!\!{2b}}.\]
Note that
\[
\frac{\log T_{i-1}}{\log T_i}
= \frac{1}{1+\frac{\log (1+L_i/T_{i-1})}{\log T_{i-1}}}
= 1- \mathcal O\left(\frac{\log (1+L_i/T_{i-1})}{\log T_{i-1}}\right)
= 1- \mathcal O\left(\frac{L_i}{T_{i-1}\log T_{i-1}}\right),
\]
where the constant in the $\mathcal O$-term can be chosen independent of~$i$.
Thus, using Lemma~\ref{lem:sums}, we get that
\[\sum_{i=1}^n \frac{L_i^{2a-2}W^{2b}_i}{S^{2b}_i}
=\mathcal O\left(\sum_{i=1}^n 
\frac{L_i^{2a+2b-2}}{T^2_{i-1}\log^2 T_{i-1}}\right)
= \mathcal O(1),\]
since we have assumed that $\mathbb E L^{2a+2}<+\infty$ (and also, $b\in\{1,2\}$). 
The cases (1.2) and (2) can be treated similarly, and we get that,
{\cec if $b=1$, then
\begin{equation}\label{eq:crochet}
\langle M\rangle_n
= \begin{cases}
\mathcal  O(1) & \text{ if } \mu=\mu_1\text{ or }\mu = \mu_2 \text{ and }\delta<\nicefrac12 \\
\mathcal O(\log n) & \text{ if } \mu = \mu_2 \text{ and }\delta=\nicefrac12 \
\end{cases},
\end{equation}
and, if $b=2$, then
\begin{equation}\label{eq:crochet2}
\langle M\rangle_n = \mathcal O(1).
\end{equation}
The law of large numbers for martingales (see, e.g.~\cite[Theorem 1.3.15]{Duflo}) states that 
\begin{enumerate}[(a)]
\item if $\langle M\rangle_n = \mathcal O(1)$ almost surely, then $M_n$ converges almost surely to a finite random variable; 
\item if $\langle M \rangle_n \to\infty$ almost surely, then $M_n = o(\langle M\rangle_n)$ almost surely.
\end{enumerate}
Equation~\eqref{eq:crochet} together with Equation~\eqref{eq:used_end_2} thus gives that, almost surely when $n\to\infty$,
\[
\sum_{i=1}^n \frac{F_i^{a-1}W_i}{S_i}
= \mathbb E_{\bs L}\left[\sum_{i=1}^n \frac{F_i^{a-1}W_i}{S_i}\right] + o\big(\langle M\rangle_n\big)\\
= \begin{cases}
\frac{\mathbb E L^a}{a\mathbb E L}\, s(n) + o\big(\sqrt{s(n)}\big) & \text{ if }\mu=\mu_1\\
\frac{\mathbb E L^a}{a(\mathbb E L)^{1-\delta}}\, s(n) + o\big(\sqrt{s(n)}\big) & \text{ if }\mu=\mu_2,
\end{cases}
\]
because $\langle M\rangle_n = o\big(s(n)\big)$ in all cases.
Similarly, Equation~\eqref{eq:crochet2} together with Equation~\eqref{eq:used_end} gives that, almost surely when $n\to\infty$,
\[\sum_{i=1}^n \frac{F_i^{a-1}W_i}{S^2_i}
= \mathbb E_{\bs L}\left[\sum_{i=1}^n \frac{F_i^{a-1}W_i}{S^2_i}\right]
+ o(1),\]
which concludes the proof.
}
\end{proof}

\subsection{Proof of Proposition~\ref{prop:marginals}}\label{sec:proof_marginals}
{\cec Using Lemmas~\ref{lem:exp_Fi}, \ref{lem:sums}, and \ref{lem:sumsF_iW_i}, we can now follow the plan of Section~\ref{sub:plan_proof} to prove Proposition~\ref{prop:marginals}.}
Recall that, given $\bs W$ (or, equivalently, $\bs L$), 
$(\bs 1_{\nu_i\preccurlyeq u_n})_{i\leq n}$ are independent Bernoulli random variables with respective parameters $W_i/S_i$, where $S_i = \sum_{j=1}^i W_j$ (see~Proposition~\ref{prop:Dobrow}). 
Also note that, by construction, the random variables $(\bs 1_{\nu_i\preccurlyeq u_n})_{i\leq n}$ are independent of $\bs F$; therefore, we have
\[\mathbb E_{\bs L,\bs F} \Phi(u_n) 
= \sum_{i=1}^n \frac{F_i W_i}{S_i}=\hat \kappa_2 s(n) + o\big(\sqrt{s(n)}\big),\]
in probability when $n\to\infty$, where we have applied Lemma~\ref{lem:sumsF_iW_i} (which applies because $\mathbb E L^6<+\infty$, by assumption); we recall that 
\[\hat\kappa_2=\frac{\mathbb EL^2}{2\mathbb E L}\text{ if }\mu=\mu_1
\quad\text{ and }\quad
\hat\kappa_2=\frac{\mathbb EL^2}{2(\mathbb E L)^{1-\delta}}\text{ if }\mu=\mu_2.\]
Similarly, we have
\[\mathrm{Var}_{\bs L, \bs F}(\Phi(u_n))
= \sum_{i=1}^n \,\frac{F_i^2W_i}{S_i}\left(1-\frac{W_i}{S_i}\right)
= \sum_{i=1}^n \,\frac{F_i^2W_i}{S_i} 
- \sum_{i=1}^n \,\frac{F_i^2W^2_i}{S^2_i}
=\hat\kappa_3 s(n) + o(s(n)),
\]
by Lemma~\ref{lem:sumsF_iW_i} (which applies because $\mathbb E L^8<+\infty$, by assumption); see Theorem~\ref{th:sums_along_branches} for the definition of $\hat \kappa_3$. 
We can apply Lindeberg's central limit theorem to deduce Proposition~\ref{prop:marginals}.

\subsection{Proof of Theorem~\ref{th:sums_along_branches}}\label{sub:last_proof}
{\cec Proposition~\ref{prop:marginals} gives convergence of the marginals in~Theorem~\ref{th:sums_along_branches}. To get joint convergence, we need to prove that the correlation between $\Phi(u_n)$ and $\Phi(v_n)$ is negligible in front of $\sqrt{s(n)}$ so that $\Phi(u_n)/\sqrt{s(n)}$ and $\Phi(v_n)/\sqrt{s(n)}$ are asymptotically independent. This is true because the last common ancestor of $u_n$ and $v_n$ is ``high'' in the tree (i.e.\ close to the root): more precisely, its height converges in distribution to a finite random variable when $n\to+\infty$; this is stated in Lemma~\ref{lem:K}.}

For every node $\nu = \bar\nu i \in \mathcal T_n$ ($\bar\nu \in \mathcal T_n$, and $i\geq 1$),
we denote by $\mathcal T_n^{\ell}(\nu)$ the subtree of $\mathcal T_n$ rooted at $\nu$, 
and by $\mathcal T_n^{r}(\nu)$ subtree of $\mathcal T_n$ consisting of $\bar\nu$ and all the subtrees rooted at right-siblings $\bar\nu j$ ($j>i$) of $\nu$.
We informally call $\mathcal T_n^\ell(\nu)$ the ``left'' subtree of $\nu$, 
and $\mathcal T_n^r(\nu)$ its ``right'' subtree (see Figure~\ref{fig:LCA_Notations}).

\begin{definition}\label{df:LCU}
Given two nodes $u$ and $v$ of $\mathcal T_n$, 
we denote their last common ancestor $u \wedge v$.
Two children of $u \wedge v$ are respective ancestors of $u$ and $v$;
the smallest in the lexicographic order is called the {\it last common uncle} of $u$ and $v$, 
and denoted by $u_n\star v_n$.
\end{definition}

\begin{proposition}\label{lem:LCA}
Let $\bs W = (W_i)_{i\geq 1}$ be a sequence of i.i.d.\ random variables, 
and $\mathcal T_n$ be the $n$-node $\bs W$-{\sc wrrt}.
Let $u_n$ and $v_n$ be two nodes taken independently at random in $\mathcal T_n$ 
with probability proportional to the weights: for all $1\leq i\leq n$,
\[\mathbb P_{\bs W}(u_n = \nu_i) = \mathbb P_{\bs W}(v_n = \nu_i)  = \frac{W_i}{S_n}.\]
Then, conditionally on $\bs W$, we have
$u_n\wedge v_n
\xrightarrow{\sss d} \kappa$ when $n\to\infty$,
where $\kappa$ is a (finite) random element of $\{1, 2, 3, \ldots \}^*$.
\end{proposition}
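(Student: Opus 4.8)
The plan is to use the consistent coupling from Proposition~\ref{prop:2Dobrow}, which realizes a growing $\bs W$-{\sc wrrt} together with two weight-proportionally sampled nodes $(\tilde u_n, \tilde v_n)$. Recall that in that construction, the nodes move according to independent Bernoulli variables $B_n, B'_n$ of parameter $W_n/S_n$: the node $\tilde v_n$ ``jumps'' onto $\tilde u_n$ exactly when $B_n = B'_n = 1$, and after the last such time the two sampled nodes sit in different subtrees and their last common ancestor stays frozen. So it suffices to show that, conditionally on $\bs W$, the event $\{B_n = B'_n = 1\}$ occurs only finitely often almost surely, and then to identify the limiting last common ancestor with the (almost surely finite) random node $\kappa$ produced at the last such jump.

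\textbf{Step 1 (Borel--Cantelli).} Conditionally on $\bs W$, the events $E_n = \{B_n = 1, B'_n = 1\}$ are independent with $\mathbb P_{\bs W}(E_n) = (W_n/S_n)^2$. Since $\bs W$ is an i.i.d.\ sequence of positive random variables, the strong law of large numbers gives $S_n = W_1 + \cdots + W_n \sim n\,\mathbb E W$ almost surely (here $\mathbb E W$ may be infinite, in which case $S_n/n \to \infty$, which only helps); and $W_n/S_n \to 0$ almost surely. One then checks $\sum_n (W_n/S_n)^2 < \infty$ almost surely. The quickest route: write $(W_n/S_n)^2 \le W_n^2 / S_{n-1}^2$, and since $S_{n-1}\ge c n$ eventually for some random $c>0$, it is enough that $\sum_n W_n^2 / n^2 < \infty$ a.s., which follows because $\mathbb E[W_n^2/n^2]$ need not be summable in general --- so instead I would argue via the three-series / Kolmogorov maximal inequality applied to the i.i.d.\ variables, or more simply note that $W_n^2/n^2 \to 0$ a.s.\ and use a pathwise comparison after splitting on whether $W_n \le n^{3/4}$; on the bounded part the sum of second moments over $n^2$ converges, and the unbounded part happens finitely often by Borel--Cantelli since $\sum_n \mathbb P(W_n > n^{3/4}) < \infty$ whenever $W$ has a finite $(4/3)$-moment, which is guaranteed here as the relevant application has $\mathbb E L^8 < \infty$ and $W_i$ is a deterministic increasing function of $T_i$. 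Either way, Borel--Cantelli's first lemma then yields that, conditionally on $\bs W$ and $\bs W$-almost surely, $E_n$ occurs for only finitely many $n$.

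\textbf{Step 2 (stabilization).} Let $K = \max\{n \ge 1 : B_n = B'_n = 1\}$, which is a.s.\ finite by Step~1 (with the convention $K = 1$ if no such $n>1$ occurs, since $\tilde u_1 = \tilde v_1$). For $n > K$, inspect the recursion in Proposition~\ref{prop:2Dobrow}: when $(B_n, B'_n) \in \{(0,0),(1,0),(0,1)\}$, the node $\tilde u_n$ stays on its branch or extends it, and likewise $\tilde v_n$, but the two never coincide again, so $\tilde u_n \wedge \tilde v_n$ equals $\tilde u_K \wedge \tilde v_K$ for all $n \ge K$ (this is exactly the ``last common ancestor stays constant'' observation flagged in the text before Proposition~\ref{lem:LCA}). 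Hence $\tilde u_n \wedge \tilde v_n$ is eventually constant, equal to the random node $\kappa := \tilde u_K \wedge \tilde v_K \in \mathcal A^*$, which is a.s.\ well-defined and finite. Since $(\tilde{\mathcal T}_n, \tilde u_n, \tilde v_n)$ has the same law as $(\mathcal T_n, u_n, v_n)$ for each $n$, we conclude that $u_n \wedge v_n \xrightarrow{\sss d} \kappa$ conditionally on $\bs W$.

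\textbf{Main obstacle.} The only real content is the almost-sure summability $\sum_n (W_n/S_n)^2 < \infty$ in Step~1; everything else is a deterministic inspection of the coupling. The delicate point is that $\mathbb E[W_n^2/S_n^2]$ is not obviously summable term-by-term, so one must argue pathwise rather than in expectation --- using $S_n \ge cn$ eventually (a.s., from the SLLN) together with a truncation of $W_n$ to control $\sum_n W_n^2/n^2$. I expect this to go through cleanly under the standing moment hypothesis on $\bs L$ (hence on $\bs W$), but it is worth being careful that the statement of Proposition~\ref{lem:LCA} as written only assumes $\bs W$ i.i.d.\ with no moment condition, so the truncation argument should be presented so that a finite moment slightly above order one suffices, which is all that is needed in the applications.
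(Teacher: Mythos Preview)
Your overall strategy is sound and, in fact, more direct than the paper's. The paper proceeds in two stages: first (Lemma~\ref{lem:K}) it uses the same coupling and Borel--Cantelli idea you describe, but only to conclude that the \emph{height} $|u_n\wedge v_n|$ converges; then, for Proposition~\ref{lem:LCA} proper, it runs a separate martingale argument showing that for every fixed node~$\nu$ the normalized subtree weight $W^*_n(\nu)/S_n$ converges a.s., computes $\mathbb P_{\bs W}(u_n\star v_n=\nu)$ in terms of these weights, and passes to the limit. Your observation that the coupled node $\tilde u_n\wedge\tilde v_n$ itself is eventually constant (equal to $\nu_K$, since $\tilde u_K=\tilde v_K=\nu_K$ and thereafter each of $\tilde u,\tilde v$ only moves to a brand-new child) bypasses that martingale argument entirely. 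This is a genuine simplification; the paper's route has the compensating advantage of describing the limiting law of $\kappa$ explicitly via the $\hat W^*(\nu)$.

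That said, your Step~1 contains a real error. With truncation at level $n^{3/4}$ the ``bounded'' part gives $W_n^2\bs 1_{W_n\le n^{3/4}}/n^2\le n^{-1/2}$, which is not summable, so that argument fails as written. The fix is to truncate at level $n$: if $\mathbb E W<\infty$ then $\{W_n>n\}$ occurs finitely often a.s., and for the bounded part one checks
\[
\sum_{n\ge 1}\frac{\mathbb E[W^2\bs 1_{W\le n}]}{n^2}
=\sum_{k\ge 1}\mathbb E[W^2\bs 1_{k-1<W\le k}]\sum_{n\ge k}\frac1{n^2}
\le C\sum_{k\ge 1}\frac{\mathbb E[W^2\bs 1_{k-1<W\le k}]}{k}
\le C\,\mathbb E W<\infty,
\]
so $\sum_n W_n^2/n^2<\infty$ a.s.\ under only $\mathbb E W<\infty$. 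Combined with $S_n\ge cn$ eventually (SLLN), this gives $\sum_n(W_n/S_n)^2<\infty$ a.s.\ as required. Your aside about $\mathbb E L^8$ and ``$W_i$ a deterministic increasing function of $T_i$'' is confused: in the paper's application the $W_i$ are \emph{not} i.i.d., and the paper handles that case separately via Lemma~\ref{lem:sumsF_iW_i}; for the i.i.d.\ statement you are proving, the argument above under $\mathbb E W<\infty$ suffices.

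Finally, in Step~2, tighten the justification: ``never coincide again'' alone does not imply a frozen LCA. The correct point is that for $n>K$ each of $\tilde u_n,\tilde v_n$ either stays put or moves to a \emph{new} child of its current position, and these moves never happen simultaneously; an easy case check then shows $\tilde u_n\wedge\tilde v_n=\nu_K$ for all $n\ge K$.
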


\begin{figure}
\includegraphics[width=.25\textwidth]{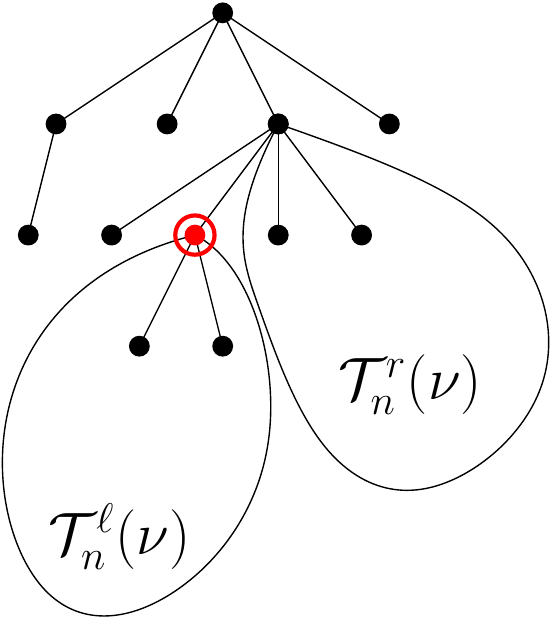}
\caption{
A node $\nu$ (in red and marked by a circle) and its ``left'' and ``right'' subtrees, $\mathcal T_n^\ell(\nu)$ and $\mathcal T_n^r(\nu)$.}
\label{fig:LCA_Notations}
\end{figure}

The first step in proving Proposition~\ref{lem:LCA} is to prove that $|u_n\wedge v_n|$ converges in distribution to an almost surely finite random variable.
\begin{lemma}\label{lem:K}
Under the assumptions of Theorem~\ref{th:sums_along_branches}, assuming that $\delta\in(0,\nicefrac12)$ if $\mu=\mu_2$, we have that, asymptotically when $n\to\infty$,
\begin{enumerate}[(a)]
\item $|u_n\wedge v_n| \xrightarrow{\sss d} \theta$, where $\theta$ is almost surely finite, and
\item $\Phi(u_n\wedge v_n)\xrightarrow{\sss d} \Theta$, where $\Theta$ is almost surely finite.
\end{enumerate}
\end{lemma}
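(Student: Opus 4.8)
The plan is to control the growth of the height (and of $\Phi$) of the last common ancestor $u_n\wedge v_n$ by analysing, for each fixed small node $\nu$, the event that both $u_n$ and $v_n$ eventually descend from $\nu$. Using Corollary~\ref{cor:bernoullis} and the coupling of Proposition~\ref{prop:2Dobrow}, I would first recall that, conditionally on $\bs W$, the indicators $\bs 1_{\nu_i\preccurlyeq u_n}$ and $\bs 1_{\nu_i\preccurlyeq v_n}$ are independent Bernoulli random variables with common parameter $W_i/S_i$, where $S_i=\sum_{j\le i}W_j$, and moreover (via the ``jumping'' description following Proposition~\ref{prop:2Dobrow}) that the event $\{|u_n\wedge v_n|\ge k\}$ is controlled by the event that $B_i=B'_i=1$ for some $i$ above the $k$-th ancestor of $u_n$. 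The key quantitative input is that $W_i/S_i\to 0$ fast enough: since $S_i=\bar\mu(T_i)$, $T_i\sim i\,\mathbb E L$ a.s., and $\bar\mu$ is one of our explicit kernels, one computes $W_i/S_i = \mathcal O\!\big((\log T_i)^{\alpha-1}/(T_i\log T_i)\big)$ for $\mu_1$ (so $\asymp 1/(i\log i)$ when $\beta\neq0$ and $\asymp 1/(i\log^2 i)$ when $\beta=0$) and $W_i/S_i=\mathcal O\!\big(T_i^{\delta-1}\big)\asymp i^{\delta-1}$ for $\mu_2$; in every case $\sum_i (W_i/S_i)^2<\infty$ a.s.

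From there I would argue (a) as follows. The number of indices $i$ with $B_i=B'_i=1$ is, conditionally on $\bs W$, a sum of independent Bernoulli variables with parameters $(W_i/S_i)^2$, hence a.s.\ finite by Borel--Cantelli, so a.s.\ there is a last such index $K=\max\{i:B_i=B'_i=1\}$ (this is exactly the integer appearing in the proof of Theorem~\ref{th:cv_occupation}). For $n\ge K$ the node $\tilde u_n\wedge\tilde v_n$ stabilises, i.e.\ $u_n\wedge v_n$ is eventually constant along the coupling; its limit $\kappa$ is an a.s.\ finite random word, and $\theta:=|\kappa|$ is a.s.\ finite. Translating the conditional (quenched) statement into the weak convergence $|u_n\wedge v_n|\xrightarrow{\sss d}\theta$ is then routine. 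For part (b), note that $\Phi(u_n\wedge v_n)=\sum_{i=1}^n F_i\,\bs 1_{\nu_i\preccurlyeq u_n\wedge v_n}$; on the event $\{n\ge K\}$ this sum is over a fixed finite set of indices (the ancestors of $\kappa$), so $\Phi(u_n\wedge v_n)\to\sum_{\nu_i\preccurlyeq\kappa}F_i=:\Theta$, which is a finite sum of a.s.-finite random variables (each $F_i\le L_i<\infty$), hence a.s.\ finite. Again the conditional convergence upgrades to the asserted convergence in distribution.

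The main obstacle I anticipate is making rigorous the claim that the last common ancestor (equivalently the ``jumping'' of $\tilde v_n$ onto $\tilde u_n$'s branch) stops after finitely many steps \emph{uniformly enough} to get a distributional limit, rather than merely ``$|u_n\wedge v_n|$ is tight''. Concretely one must check that the coupling of Proposition~\ref{prop:2Dobrow} genuinely encodes $\{i:\nu_i\preccurlyeq u_n\wedge v_n\}=\{i\le M_n: B_i=B'_i=1\}$ for the appropriate random $M_n$, and that the a.s.\ finiteness of $K$ (which rests on the summability $\sum_i(W_i/S_i)^2<\infty$, itself depending on the a.s.\ asymptotics $T_i\sim i\,\mathbb E L$ and the explicit form of $\bar\mu$, and on $\delta<\nicefrac12$ for $\mu_2$) indeed forces eventual constancy of $u_n\wedge v_n$ and hence of $\Phi(u_n\wedge v_n)$. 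Once this structural lemma about the coupling is in place, everything else is a Borel--Cantelli argument plus the observation that a.s.\ eventual constancy implies convergence in distribution.
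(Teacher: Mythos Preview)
Your approach is essentially the same as the paper's: both use the coupling of Proposition~\ref{prop:2Dobrow}, observe that the last common ancestor $\tilde u_n\wedge\tilde v_n$ can only change at times $i$ with $B_i=B'_i=1$, and invoke Borel--Cantelli with the summability $\sum_i(W_i/S_i)^2<\infty$ (which the paper pulls from Lemma~\ref{lem:sumsF_iW_i}) to conclude that there is a last such time $K$, after which the LCA and hence $|u_n\wedge v_n|$ and $\Phi(u_n\wedge v_n)$ stabilise. Two minor slips to fix: your stated asymptotics for $W_i/S_i$ under $\mu_1$ are swapped and miss the $\alpha$-dependence (one gets $W_i/S_i\asymp(\log i)^{\alpha-1}/i$ when $\beta\neq0$ and $\asymp 1/(i\log i)$ when $\beta=0$, though both still give a summable square), and the set $\{i:\nu_i\preccurlyeq u_n\wedge v_n\}$ is not literally $\{i\le M_n:B_i=B'_i=1\}$ but rather the ancestry of $\nu_K$---this does not affect the argument, since what you actually need (and correctly use) is only that the LCA equals $\nu_K$ for all $n\ge K$.
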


\begin{proof}
We treat the proofs of {\it (a)} and {\it (b)} at once; to do so, we introduce the notation $\hat\Phi$ to be understood as $\hat\Phi = |\,\cdot\,|$ in the proof of {\it (a)} and $\hat\Phi = \Phi$ in the proof of {\it (b)}.

Recall that we denote by $\nu_1, \ldots, \nu_n$ the nodes of the $n$-node $\bs W$-{\sc wrrt} in their order of addition in the tree. Also, for all~$i$, $\xi(i)$ is the index of the parent of node $\nu_i$. We reason conditionally on $\bs L$, and thus $\bs W$, and recall that $S_n = \sum_{i=1}^n W_i$ for all $n\geq 1$.

Recall that, for all $n\geq 1$, $(\mathcal T_n, u_n, v_n)$ is equal in distribution to $(\tilde {\mathcal T}_n, \tilde u_n, \tilde v_n)$ (defined just before Proposition~\ref{prop:Dobrow}). 
By the construction of $(\tilde {\mathcal T}_n, \tilde u_n, \tilde v_n)_{n\geq 1}$, 
we have
\begin{linenomath}
\begin{align*}
\hat\Phi(\tilde u_n \wedge  \tilde v_n) 
=& \bs 1_{B_n=B'_n=1} \hat\Phi(\nu_n) 
+ \bs 1_{B_n=1\neq B'_n} \hat\Phi( \tilde u_{n-1} \wedge  \tilde v_{n-1})\\
&
+ \bs 1_{B_n \neq 1 = B'_n} \hat\Phi( \tilde u_{n-1} \wedge  \tilde v_{n-1}) 
+ \bs 1_{B_n, B'_n \neq 1} \hat\Phi( \tilde u_{n-1}\wedge  \tilde v_{n-1})\\
=& 
\big(1-\bs 1_{B_n = B'_n = 1}\big) 
\hat\Phi( \tilde u_{n-1}\wedge \tilde v_{n-1}) 
+ \bs 1_{B_n = B'_n = 1}\hat\Phi(\nu_n).
\end{align*}\end{linenomath}Thus, if we let $K_n := \hat\Phi(\tilde u_n \wedge \tilde v_n)$ 
for all integers~$n$, we have
\begin{equation}\label{eq:rec_Kn}
K_n \overset{\sss d}{=} K_{n-1} + \bs 1_{B_n = B'_n = 1}
\big(\hat\Phi(\nu_n)-K_{n-1}\big).
\end{equation}
Note that, for all $i\geq 1$,
\[\mathbb P_{\bs L}\Big(\bs 1_{B_i = B'_i = 1}\big(\hat\Phi(\nu_i)-K_{i-1}\big) \neq 0\Big)
\leq \mathbb P_{\bs L}\Big(\bs 1_{B_i = B'_i = 1} \neq 0\Big)
= \Big(\frac{W_i}{S_i}\Big)^2.\]
Lemma~\ref{lem:sumsF_iW_i} (with $a=1$) implies that, if $\mu=\mu_1$ or $\mu=\mu_2$ and $\delta\in(0,\nicefrac12)$, then, $\bs L$-almost surely, we have
\begin{equation}\label{eq:delta_crucial}
\sum_{i\geq 1} \frac{W_i^2}{S_i^2} = \mathcal O(1).
\end{equation}
Therefore, by the Borel-Cantelli lemma, we can infer that, almost surely,
\begin{linenomath}
\begin{align*}
\Theta&:=\sum_{i=2}^{\infty} \bs 1_{B_i = B'_i = 1}\big(\Phi(\nu_i)-K_{i-1}\big)<+\infty,\\
\theta&:=\sum_{i=2}^{\infty} \bs 1_{B_i = B'_i = 1}\big(|\nu_i|-K_{i-1}\big)<+\infty,\
\end{align*}\end{linenomath}and that $|\tilde u_n\wedge \tilde v_n|\xrightarrow{\sss d}\theta$ and $\Phi(\tilde u_n\wedge \tilde v_n)\xrightarrow{\sss d}\Theta$ when $n$ goes to infinity. This implies the claim since $(\mathcal T_n, u_n, v_n)$ is equal in distribution to $(\tilde {\mathcal T}_n, \tilde u_n, \tilde v_n)$ .
\end{proof}

Note that Equation~\eqref{eq:delta_crucial} is no longer true if $\mu=\mu_2$ 
and $\delta = \nicefrac12$, which is why Proposition~\ref{prop:marginals} holds for $\delta=\nicefrac12$ while the joint convergence of Theorem~\ref{th:sums_along_branches} is open in that case (and maybe does not holds).

\begin{proof}[Proof of Proposition~\ref{lem:LCA}]
Let us denote by $k=k(\nu)$ the random integer such 
that $\nu$ is an internal node of $\mathcal T_k$ but not of $\mathcal T_{k-1}$.
Let us denote by $W^*_n(\nu)$ 
the sum of the weights of the internal 
nodes of $\mathcal T_n$ descending from~$\nu$ 
(including $\nu$ itself). 

{\cec $\bullet$ We show that $W^*_n(\nu)$ is asymptotically almost surely proportional to $S_n$, the total weight of the $n$-node tree.}
Conditionally on $k(\nu)<+\infty$, 
we have $W^*_{k(\nu)}(\nu) = W_{k(\nu)}$, 
and, for all $n\geq k(\nu)$,
\[\mathbb E_{\bs L}\left[W^*_{n+1}(\nu)\Big|\mc F_n\right]
= W^*_n(\nu) + \frac{W^*_n(\nu)}{S_n}\, W_{n+1},\]
because at time $n+1$, the weight of the subtree rooted at $\nu$ increases by $W_{n+1}$
with probability $W^*_n(\nu)/S_n$, and stays unchanged otherwise.
Therefore, conditionally on $k(\nu)<+\infty$ and on $\bs L$,
\[\prod_{i=1}^{n-1}\left(1+\frac{W_{i+1}}{S_i}\right)^{-1} W^*_n(\nu) \quad (n\geq k(\nu))\]
is a positive martingale, 
and thus converges almost surely to a random variable $W^*(\nu)$.
Note that
\[ \prod_{i=1}^{n-1}\left(1+\frac{W_{i+1}}{S_i}\right)
= \prod_{i=1}^{n-1}\frac{S_i + W_{i+1}}{S_i}
= \prod_{i=1}^{n-1}\frac{S_{i+1}}{S_i} = \frac{S_n}{W_1},\]
implying that, almost surely when $n\to\infty$,
\begin{equation}\label{eq:cv_W*}
\frac{W^*_n(\nu)}{S_n} \to \frac{W^*(\nu)}{W_1} =: \hat W^*(\nu).
\end{equation}
{\cec $\bullet$} Conditionally on $\bs L$ (and thus $\bs W$), we have
\[
\mathbb P_{\bs L}
\left(
u_n\star v_n = \nu\right)
= \frac{W_n^*(\nu)\big(W_n^*(\bar\nu) - \sum_{j=1}^{\mathrm{last}(\nu)} W_n^*(\bar\nu j)\big)}{S_n^2},
\]
where $\bar\nu$ is the parent of node $\nu$, and $\mathrm{last}(\nu)\geq 1$ the rank of $\nu$ among its siblings.
Therefore, 
we have that, almost surely, for all $\nu\in\mathcal T_n$,
\[
\mathbb P_{\bs L}
\left(u_n\star v_n = \nu\right)
\to
\hat W^*(\nu) \big(\hat W^*(\bar \nu)-\sum_{j=1}^{\mathrm{last}(\nu)} \hat W^*(\bar\nu j)\big)
\]
implying that, conditionally on $\bs L$, we have,
\begin{equation}\label{eq:cv_sizes}
u_n\star v_n
\xrightarrow{\sss d} \eta,
\end{equation}
in distribution when $n$ goes to infinity,
where $\eta$ is a random word of $\{1,2, 3, \ldots\}^*$.
The result follows from the fact that $u_n\wedge v_n$ is the parent of $u_n\star v_n$; we define $\kappa$ as the parent of the random node $\eta$, it is almost surely finite by Lemma~\ref{lem:K}{\it (a)}.
\end{proof}

\begin{proof}[Proof of Theorem~\ref{th:sums_along_branches}]
In the proof, we condition on the sequences $\bs L$  (and thus on $\bs W$) and $\bs F$. 
Recall that, $(\mathcal T_n, u_n, v_n)$ is equal in distribution to the triple $(\tilde{\mathcal T}_n, \tilde u_n, \tilde v_n)$ defined just before Proposition~\ref{prop:2Dobrow}. We denote by
\[K=
\max\set{k\geq 1: B_i = B_i'};
\]
we know from Lemma~\ref{lem:LCA} that $K<+\infty$ almost surely.
Also, by definition (see Proposition~\ref{prop:2Dobrow}), 
we have that, for all $i\geq K$,
\[\big(\bs 1_{\nu_i\prec u_n}, \bs 1_{\nu_i\prec v_n}\big)
= (B_i, B'_i).\]
Consider $C_n=\sum_{i=1}^n F_iB_i$ and $C'_n=\sum_{i=1}^n F_iB'_i$;
conditionally on $\bs L$ and $\bs F$, $C$ and $C'$ are independent (inhomogeneous) random walks. 
We have
\begin{equation}\label{eq:C_n}
\left(\frac{\Phi(\tilde u_n)-b_{\hat \kappa_2s(n)}}{\sqrt{\hat\kappa_3s(n)}},
\frac{\Phi(\tilde v_n)-b_{\hat \kappa_2s(n)}}{\sqrt{\hat\kappa_3s(n)}}\right)
= \left(\frac{C_n-b_{\hat \kappa_2s(n)}}{\sqrt{\hat\kappa_3s(n)}} 
+ \frac{\Phi(\tilde u_n)-C_n}{\sqrt{\hat\kappa_3 s(n)}},
\frac{C'_n-b_{\hat \kappa_2s(n)}}{\sqrt{\hat\kappa_3s(n)}}
+ \frac{\Phi(\tilde v_n)-C'_n}{\sqrt{\hat\kappa_3 s(n)}}\right).
\end{equation}
Note that,
\begin{equation}\label{eq:approx_C_n}
\max\left\{\frac{\abs{C_n-\imf{\Phi}{\tilde u_n}}}{\sqrt{s(n)}},
\frac{\abs{C'_n-\imf{\Phi}{\tilde v_n}}}{\sqrt{s(n)}}\right\}
\leq \frac{2\sum_{i=1}^{K\wedge n}F_i}{\sqrt{s(n)}}\to 0
\end{equation}
almost surely when $n\to\infty$. 
Furthermore, for all $x,y\in\mathbb R$, by independence of $C_n$ and $C'_n$ conditionally on~$\bs F$, we get
\begin{align*}
\mathbb P_{\bs L,\bs F}
\left(\frac{C_n-b_{\hat \kappa_2s(n)}}{\sqrt{\hat\kappa_3s(n)}} \geq x,
\frac{C'_n-b_{\hat \kappa_2s(n)}}{\sqrt{\hat\kappa_3s(n)}}\geq y\right)
= \mathbb P_{\bs L,\bs F}
\left(\frac{C_n-b_{\hat \kappa_2s(n)}}{\sqrt{\hat\kappa_3s(n)}} \geq x\right)
 \mathbb P_{\bs L,\bs F}\left(\frac{C'_n-b_{\hat \kappa_2s(n)}}{\sqrt{\hat\kappa_3s(n)}}\geq y\right)&\\
= 
\mathbb P_{\bs L,\bs F}
\left(\frac{\Phi(\tilde u_n)-b_{\hat \kappa_2s(n)}}{\sqrt{\hat\kappa_3s(n)}} 
+\frac{\Phi(\tilde u_n)-C_n}{\sqrt{\hat\kappa_3 s(n)}}\geq x\right)
\mathbb P_{\bs L,\bs F}\left(\frac{\Phi(\tilde v_n)-b_{\hat \kappa_2s(n)}}{\sqrt{\hat\kappa_3s(n)}}
+\frac{\Phi(\tilde v_n)-C'_n}{\sqrt{\hat\kappa_3 s(n)}}\geq y\right),&
 \end{align*}
 where we have used Equation~\eqref{eq:C_n}.
 Thus, using Proposition~\ref{prop:marginals}, Equation~\eqref{eq:approx_C_n} and Slutzky's lemma, we get that
\[\mathbb P_{\bs L,\bs F}
\left(\frac{C_n-b_{\hat \kappa_2s(n)}}{\sqrt{\hat\kappa_3s(n)}} \geq x,
\frac{C'_n-b_{\hat \kappa_2s(n)}}{\sqrt{\hat\kappa_3s(n)}}\geq y\right)
\to \mathbb P(\Lambda_1\geq x)\mathbb P(\Lambda_2\geq y),
\]
where $\Lambda_1$ and $\Lambda_2$ are two standard Gaussian random variables;
in other words,
\[\left(\frac{C_n-b_{\hat \kappa_2s(n)}}{\sqrt{\hat\kappa_3s(n)}},
\frac{C'_n-b_{\hat \kappa_2s(n)}}{\sqrt{\hat\kappa_3s(n)}}\right)
\xrightarrow{\sss d} (\Lambda_1,\Lambda_2),\]
where $\Lambda_1$ and $\Lambda_2$ are independent.
Using Slutzky's lemma again, together with Equations~\eqref{eq:C_n} and~\eqref{eq:approx_C_n}, we get that
\[\left(\frac{\Phi(\tilde u_n)-b_{\hat \kappa_2s(n)}}{\sqrt{\hat\kappa_3s(n)}},
\frac{\Phi(\tilde v_n)-b_{\hat \kappa_2s(n)}}{\sqrt{\hat\kappa_3s(n)}}\right)
\xrightarrow{\sss d} (\Lambda_1,\Lambda_2),\]
which concludes the proof since $(\mathcal T_n, u_n, v_n) \overset{\sss d}= (\tilde{\mathcal T}_n, \tilde u_n, \tilde v_n)$.
\end{proof}

\subsection{Sketch proof of Theorem \ref{th:profile}}
Following exactly the proof of Theorem~\ref{th:sums_along_branches}, replacing $\Phi(\nu)$ by 
\[\Psi(\nu) = |\nu| + 1 = \sum_{i=1}^n \bs 1_{\nu_i \preccurlyeq \nu},\]
one can prove that:
If $u_n$ and $v_n$ be two nodes in the $n$-node {\sc wrrt},
chosen independently at random with probability proportional to the weights,
then, conditionally on $\bs L$,
asymptotically when $n\to\infty$,
\begin{equation}\label{eq:Psi}
\left(\frac{\Psi(u_n) - s(n)}{\sqrt{s(n)}}, \frac{\Psi(v_n) - s(n)}{\sqrt{s(n)}}\right) 
\xrightarrow{\sss d} (\Lambda_1, \Lambda_2),
\end{equation}
where $\Lambda_1$ and $\Lambda_2$ are two independent standard Gaussian random variables.
The result follows by applying the standard \cite[Lemma~3.1]{MR3629870}
 since the profile is the (random) 
distribution of $|u_n|$ and $|v_n|$ given $\mathcal T_n$.

\bibliography{memory.bib}
\bibliographystyle{amsalpha}

\end{document}